\tikzset{>=latex}
\DeclareFontFamily{U}{wncy}{}
    \DeclareFontShape{U}{wncy}{m}{n}{<->wncyr10}{}
    \DeclareSymbolFont{mcy}{U}{wncy}{m}{n}
    \DeclareMathSymbol{\Sh}{\mathord}{mcy}{"58}
\theoremstyle{plain}
\newtheorem{thm}{Theorem}
\newtheorem{lemma}{Lemma}
\newtheorem*{lemma4}{Lemma 5}
\newtheorem{corollary}{Corollary}
\newtheorem{definition}{Definition}
\newtheorem{proposition}{Proposition}
\newtheorem*{proposition1}{Proposition 1}
\theoremstyle{definition} \theoremstyle{definition}
\newtheorem{remark}{Remark}
\newtheorem{question}{Question}
\newtheorem{example}{Example}
\theoremstyle{remark}
\newcommand{\Sc}{\mathcal{S}}
\newcommand{\G}{\textsc{\G}}
\newcommand{\g}{\mathfrak{g}}
\newcommand{\uu}{\mathfrak{u}}
\newcommand{\U}{\mathcal{U}}
\newcommand{\Z}{\mathbb{Z}}
\newcommand{\R}{\mathbb{R}}
\newcommand{\Gm}{\mathbb{G}_m}
\newcommand{\Ga}{\mathbb{G}_a}
\newcommand{\C}{\mathbb{C}}
\newcommand{\F}{\mathbb{F}}
\newcommand{\qH}{\mathbb {H}}
\newcommand{\Hom}{{\rm Hom}}
\newcommand{\Ind}{{\rm Ind}}
\newcommand{\ind}{{\rm ind}}
\def\G{{\rm G}}
\def\Aut{{\rm Aut}}
\def\SL{{\rm SL}}
\def\PSL{{\rm PSL}}
\def\Sp{{\rm Sp}}
\def\U{{\rm U}}
\def\Aut{{\rm Aut}}
\def\GL{{\rm GL}}
\def\SO{{\rm SO}}
\def\OO{{\rm O}}
\def\Out{{\rm Out}}
\let\@wraptoccontribs\wraptoccontribs
\begin{document}

\title[Generic representations for symmetric spaces]
      {Generic representations for symmetric spaces}
      
\author{ Dipendra Prasad}

\contrib[with an appendix by]{Yiannis Sakellaridis}

\subjclass{Primary 11F70; Secondary 22E55}

\begin{abstract}
For a connected quasi-split reductive algebraic group
$G$ over a field $k$, which is either a finite field or
a non-archimedean local field,  $\theta$ an involutive automorphism of
$G$ over $k$, let $K =G^\theta$. Let $K^1=[K^0,K^0]$, the commutator subgroup of $K^0$, the connected component of identity of $K$.
In this paper, we provide a simple condition on $(G,\theta)$ for
there to be an  irreducible admissible generic representation $\pi$ of $G$ 
with $\Hom_{K^1}[\pi,\C] \not = 0$.  The condition is most transparent to state in terms 
of a real reductive group $G_\theta(\R)$ associated to the pair $(G,\theta)$ being quasi-split.
  \end{abstract}

\maketitle
{\hfill \today}

\vspace{4mm}

\tableofcontents

\section{Introduction} \label{parameter}

Let 
${G}$ be a connected reductive algebraic group
over a field $k$, which is either a finite field or
a non-archimedean local field. Let $\theta$ be an involutive automorphism of
$G$ over $k$ and $K =G^\theta$, the subgroup of $G$ on which $\theta$ acts trivially. The pair $(G,\theta)$ or $(G,K)$ is called a symmetric space over $k$.

A well-known question of much current interest is to
spectrally decompose
$L^2(K(k)\backslash G(k))$,
or the related algebraic question to understand
irreducible admissible representations $\pi$ of $G(k)$ with $
\Hom_{K(k)}[\pi,\C] \not = 0$.
Irreducible admissible representations $\pi$ of $G(k)$ with $\Hom_{K(k)}[\pi,\C] \not = 0$ are often called {\it distinguished} representations
of $G(k)$ (with respect to $K(k)$). We refer to the work of
Lusztig \cite{lusztig} for a definitive work for $k$ a finite field, elaborated by Henderson in \cite{Hen} for $G=\GL_n(k)$ and $\U_n(k)$. For $k$ a non-archimedean local   field, there are various works, see e.g. Hakim and Murnaghan in \cite{HM}, Kato and Takano in \cite{KT}. For a general spectral decomposition, see Sakellaridis-Venkatesh \cite{SV}. For $k=\R$, the
spectral decomposition of 
$L^2(K(\R)\backslash G(\R))$ is well understood due to the works of Flensted-Jensen, Oshima et al., see \cite{OM} for a survey. 

Complete results about spectral decomposition of 
$L^2(K(k)\backslash G(k))$, or about
distinguished representations of $G(k)$, will naturally require a full
classification of irreducible admissible representations of $G(k)$. However, for many purposes, less precise, but general results such as multiplicity one property (i.e., $\dim \Hom_{K(k)}[\pi,\C] \leq 1$) when available, are of great importance. As another general question, one might mention the question of
whether there exists a discrete series
representation of $G(k)$ distinguished by $K(k)$, or whether there exists a tempered
representation of $G(k)$ distinguished by $K(k)$. The paper \cite{AGR} of Ash-Ginzburg-Rallis defines a pair $(G,L)$ for a subgroup $L$ of $G$ to be a {\it vanishing pair} if there are no cuspidal representations of $G$ distinguished by $L$, and provides many examples of such pairs without a general criterion about them.

In this paper, we give a general criterion as to when there is a
generic representation of $G(k)$ distinguished by $K(k)$ (assuming of course that $G$ is quasi-split over $k$, a condition which is always satisfied if $k$ is a finite field). Although generic representations are a very special class of representations where the geometric methods of this paper apply,  it appears to us that distinguished generic representations hold key to all distinguished
tempered representations in that the following are equivalent. 

\begin{enumerate}
\item Existence of distinguished tempered representations.
  


\item Existence of distinguished generic representations.
\end{enumerate}

We consider a more precise form of the equivalence above as the (symmetric space)
analogue of Shahidi's conjecture on the existence of a generic representation in a  tempered $L$-packet. 

\begin{question}
Let $(G,\theta)$ be a symmetric space over a local field $k$ with $G$ quasi-split over $k$. Assume that for $K=G^\theta$, $K^0$ the connected component of identity of $K$ is split 
over $k$.
Then if a tempered representation of $G(k)$ is distinguished by $K(k)$, 
then so is some generic member of its $L$-packet? In particular, if there are no generic representations of
$G(k)$ distinguished by $K(k)$, then there are no tempered representations of
$G(k)$ distinguished by $K(k)$?
\end{question}

\begin{remark} We show by an example that the question above will have a negative answer without assuming  $K^0$ 
to be split. For this we  note Corollary 6 of \cite{DP} according to which
for $G=\SO(4,2)$ (where we are using $\SO(p,q)$ to denote the orthogonal group of {\it any} quadratic space of dimension $(p+q)$, 
and rank min$\{p,q\}$), there are
cuspidal representations of $G$ distinguished by $H=\SO(4,1)$ although our main theorem below will show that there are no generic
representations of $\SO(4,2)$ distinguished by $\SO(4,1)$; on the other hand, by \cite{DP}
there are
no cuspidal representations of $G=\SO(4,2)$ 
 distinguished by $H=\SO(3,2)$.  Similarly, for $E/F$ a quadratic extension of non-archimedean local   fields, 
there are examples of distinguished cuspidal representations for $(\Sp_{4n}(F),\Sp_{2n}(E))$ in the paper of Lei Zhang \cite{Zh},
although this paper proves that there no distinguished generic 
representations for $(\Sp_{4n}(F),\Sp_{2n}(E))$. For $(\Sp_{4n}(F),\Sp_{2n}(F)\times \Sp_{2n}(F))$ 
by \cite{AGR}, there  are no distinguished cuspidal representations, and by this paper, there are
no  distinguished generic representations. \end{remark}

Before we come to the statement of the main theorem of this paper, we want to discuss a bit of the {\it universality} of 
reductive groups with involutions over general algebraically closed fields of characteristic not 2. 
Recall that reductive groups have an existence independent of the field over which they are considered,
for example $\Sp_{2n}$ is a reductive group, and we can talk of $\Sp_{2n}(\C)$ as well as
as $\Sp_{2n}(\bar{\F}_p)$ (made precise either by smooth models over open subsets of ${\rm Spec}(\Z)$ containing the point $p$,
or by saying that a reductive group over {\it any} algebraically closed field is given by a root datum). 
In a similar spirit, there is a notion of reductive groups with involution $(G,\theta)$ which makes sense
independent of the algebraically closed field (of characteristic not 2) over which these are defined.

More precisely, for the purposes of this paper, an involution $\theta$ and its conjugates under the  group $G(k)$ play similar role (the action of the  group $G(k)$ being
$(g\cdot \theta)(x) = (g\theta g^{-1})(x)=g\theta(g^{-1}xg)g^{-1}$). If $\Aut (G)(k)[2]$ denotes the set of elements 
$\theta \in \Aut(G)(k)$ with $\theta^2=1$, then the object of interest for this paper is the orbit space,
$$\Aut(G)(k)[2]/G(k).$$

The following (presumably well-known) proposition lies at the basis of this paper and allows one to compare symmetric spaces over different
algebraically closed fields. In this proposition we will use the notion of a {\it quasi-split} symmetric space $(G,\theta)$,
which will play an important role in all of this paper. A symmetric space $(G,\theta)$ over a field $k$ will be said to be quasi-split
if there exists a Borel subgroup $B$ of $G({k})$ such that $B$ and $\theta(B)$ are opposite Borel subgroups of $G$, 
i.e., $B\cap \theta(B)$ is a maximal torus of $G$. Most often in this paper, we will use this concept only over algebraically closed fields
(even if $(G,\theta)$ is defined over a finite or a non-archimedean local   field).

\begin{proposition} \label{inv}
If $\bar{k}_1$ and $\bar{k}_2$ are any two algebraically closed fields of characteristic not 2, then for any connected reductive algebraic group $G$, there 
exists a canonical identification of finite sets 
\begin{eqnarray*}
\Aut(G)(\bar{k}_1)[2]/G(\bar{k}_1) 
& \longleftrightarrow & 
 \Aut(G)(\bar{k}_2)[2]/G(\bar{k}_2).
\end{eqnarray*}
Under this identification of conjugacy classes of involutions, if $\theta_1 \leftrightarrow \theta_2$, 
then in particular,

\begin{enumerate}
\item the connected components of identity 
$(G^{\theta_1})^0$ and $ (G^{\theta_2})^0$ are reductive algebraic groups, and correspond
to each other in the sense defined earlier.
\item the symmetric  space $(G(\bar{k}_1),\theta_1)$ is quasi-split if and only if $(G(\bar{k}_2),\theta_2)$ is quasi-split.
\end{enumerate}
\end{proposition}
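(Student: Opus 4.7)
The plan is to reduce the classification of $G(\bar{k})$-conjugacy classes of involutions to combinatorial data depending only on the root datum of $G$, and so independent of the algebraically closed base field. Fix a pinning $(T, B_0, \{X_\alpha\}_\alpha)$ of $G$ (available over $\Z[1/2]$ and so over both $\bar{k}_1$ and $\bar{k}_2$). The standard structure theorem for automorphisms of reductive groups gives the semidirect decomposition
\[
\Aut(G) \;=\; G^{\ad} \rtimes \Out(G),
\]
with $\Out(G)$ the finite (hence field-independent) group of automorphisms of the based root datum of $G$. Any $\theta \in \Aut(G)(\bar{k})[2]$ projects to some $\sigma \in \Out(G)[2]$. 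Steinberg's theorems on the non-identity component $G\sigma$ of $G \rtimes \langle\sigma\rangle$ allow us, after $G(\bar{k})$-conjugation, to assume $\theta = \mathrm{Int}(t)\sigma$ with $t \in T(\bar{k})$; then $\theta^2 = 1$ becomes $t\sigma(t) \in Z(G)(\bar{k})$.

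The remaining $G(\bar{k})$-conjugations act on $t$ through the $\sigma$-twisted action of $N_G(T)(\bar{k})$, so the conjugacy class of $\theta$ is recorded by $\sigma$ together with the class of $t$ in
\[
\bigl\{t \in T(\bar{k}) : t\sigma(t) \in Z(G)(\bar{k})\bigr\}\,\big/\,\bigl((1-\sigma)T(\bar{k})\cdot Z(G)(\bar{k})\bigr),
\]
modulo the induced $W^\sigma$-action. All ingredients---$T$, the $\sigma$-action on $T$, $Z(G)$, and $W^\sigma$---are defined over $\Z[1/2]$, and both the subset and the quotient are cut out by $2$-torsion conditions (since $\sigma^2 = 1$). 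Because $T[2] = X_*(T) \otimes_\Z \F_2$ is canonically the same for $\bar{k}_1$ and $\bar{k}_2$, so is the resulting orbit space of conjugacy classes, giving the canonical bijection of the proposition.

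For (1), Steinberg's theorem gives $(G^\theta)^0$ reductive, with root datum built from the pair $(\sigma, t)$ by the classical combinatorial recipe (maximal torus an open subgroup of $T^\sigma$; roots arising from the $\sigma$-fixed roots of $\Phi(G,T)$ together with $\sigma$-averages of swapped pairs), so $(G^{\theta_1})^0$ and $(G^{\theta_2})^0$ have matching root data. For (2), quasi-splitness of $(G, \theta)$ is the existence of a Borel $B \subset G(\bar{k})$ with $B \cap \theta(B)$ a maximal torus; normalising so that $B \supset T$, one checks that this amounts to the existence of $w \in W$ with $w^{-1}\sigma(w) = w_0$, a purely combinatorial condition on the Weyl group together with its $\sigma$-action, and hence preserved under the bijection. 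The main technical obstacle is the normalisation step $\theta \sim_G \mathrm{Int}(t)\sigma$ with $t \in T(\bar{k})$, amounting to Steinberg's classical result on semisimple conjugacy in disconnected reductive groups applied to $\theta$ viewed as a semisimple element of $G\sigma \subset \Aut(G)$; this applies uniformly in characteristic $\neq 2$, and the remaining work is bookkeeping with the combinatorial invariants.
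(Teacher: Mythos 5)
Your construction of the bijection and your treatment of part (1) follow essentially the same route as the paper: project to $\Out(G)[2]$, use Steinberg/Gantmacher to normalize an involution above $\sigma$ to the form $\mathrm{Int}(t)\sigma$ with $t\in T$, observe that the resulting twisted-conjugacy classes are governed by $2$-torsion data of the (adjoint) torus together with the $W^{\sigma}$-action — the paper packages this as $H^1(\langle\theta_0\rangle,(T/Z)(\bar k))\cong\prod_{\sigma(\alpha)=\alpha}\Z/2$ — and invoke Steinberg's Theorem 8.2 for the reductivity and root-datum description of $(G^{\theta})^0$. Up to bookkeeping this is the paper's proof, and it is fine.

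Part (2), however, has a genuine gap. Your proposed criterion for quasi-splitness, ``there exists $w\in W$ with $w^{-1}\sigma(w)=w_0$,'' depends only on $(W,\sigma)$ and not on $t$, whereas quasi-splitness of $\mathrm{Int}(t)\sigma$ really does depend on $t$: for $\sigma=1$ your condition reads $w_0=1$ and would declare every inner involution non-quasi-split, but the involution $\mathrm{Int}(t_0)$ acting by $-1$ on all simple root spaces is quasi-split (Proposition \ref{basic2}); likewise in adjoint $E_6$ the two outer involutions $\theta_0$ (fixed group $F_4$, not quasi-split) and $t_0\theta_0$ (fixed group $\Sp_8$, quasi-split) have the same $\sigma$. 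The source of the error is the step ``normalising so that $B\supset T$'': the torus $T$ in your normalization is maximally $\theta$-fixed, while a Borel $B$ witnessing quasi-splitness meets $\theta(B)$ in a maximally $\theta$-split torus, so in general no $\theta$-split Borel contains $T$ (already for the split involution of $\SL_2$ the diagonal torus lies in no $\theta$-split Borel), and your condition only detects the special case where one does. What is actually needed — and what the paper proves as Proposition \ref{A-V}, using Proposition \ref{basic2}, the conjugacy of a lift of $w_0$ with $t_0$, and a case analysis for the outer types — is the criterion that some $\theta$-stable pair $(B,T)$ has every $\sigma$-fixed simple root noncompact imaginary, i.e.\ $\theta(X_\alpha)=-X_\alpha$; this is a condition on the image of $t$ in $\prod_{\sigma(\alpha)=\alpha}\Z/2$, visibly field-independent, and it is this (nontrivial) equivalence that your one-line claim would need to be replaced by.
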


We next note the following basic result (called the Cartan classification) about real groups, cf. 
\cite{Se}, Theorem 6 of Chapter III, \S4.

\begin{proposition} \label{Cartan}
  Let $\theta$ be an involutive automorphism of a connected reductive group $G$ over $\C$ with $K=G^\theta(\C)$. Then there exists a naturally associated
  real structure on  $G(\C)$, to be denoted as $G_\theta$, with  $G_\theta(\R)$ invariant under
  $\theta$  on which $\theta$ acts as a Cartan involution, so $G_\theta(\R) \cap K(\C)$ is a maximal compact subgroup of $G_\theta(\R)$. The isomorphism class of the 
real reductive group $G_\theta$ depends only on conjugacy class of the involution $\theta$ as an element in the group ${\rm Aut}(G)$. 
All real
     reductive groups are obtained by this construction (as $\G_\theta(\R)$).
\end{proposition}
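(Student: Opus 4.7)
The plan is to realize $G_\theta$ by pairing $\theta$ with a compact real form. First I would pick a compact real form $U$ of $G(\C)$, corresponding to an antiholomorphic involution $\tau$ with $G(\C)^\tau = U$; such $U$ exists for any connected reductive $G$ over $\C$ and is unique up to conjugation by $G(\C)$. The desired real structure will be $\sigma := \tau\theta = \theta\tau$, which is antiholomorphic and an involution provided $\theta$ and $\tau$ commute. So the whole construction reduces to arranging that $\theta$ and $\tau$ commute.

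The technical heart is the classical commutation lemma: within the $G(\C)$-conjugacy class of $\theta$ one can find a representative (still denoted $\theta$) commuting with $\tau$. The standard proof uses the polar/Cartan decomposition of $G(\C)$ with respect to $U$: write $\varphi = (\tau\theta)^2$, a holomorphic automorphism measuring the failure of commutation, and show that by conjugating $\theta$ by a suitable element of $\exp(i\,\mathrm{Lie}(U))$ one can reduce $\varphi$ to the identity (this is the step I expect to be the main obstacle, essentially Cartan's theorem on the conjugacy of involutions up to the compact structure). Granting this, set $G_\theta(\R) = G(\C)^\sigma$; because $\theta$ commutes with $\sigma$, it preserves $G_\theta(\R)$.

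Next I would verify that $\theta$ acts as a Cartan involution on $G_\theta(\R)$. On points fixed by $\theta$, $\sigma = \tau\theta$ coincides with $\tau$, so
\[
G_\theta(\R)^\theta \;=\; G(\C)^\sigma \cap G(\C)^\theta \;=\; G(\C)^\tau \cap G(\C)^\theta \;=\; U \cap K(\C),
\]
which is a closed subgroup of the compact group $U$, hence compact. Thus the fixed-point subgroup of the involution $\theta$ on $G_\theta(\R)$ is compact, and one checks (using that the $-1$-eigenspace of $\theta$ on $\mathrm{Lie}(G_\theta(\R))$ is $i\,\mathrm{Lie}(U\cap K(\C))$-dual in the obvious sense) that the Killing form is negative definite on the $+1$-eigenspace and positive definite on the $-1$-eigenspace modulo center, giving a Cartan decomposition. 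Consequently $G_\theta(\R)\cap K(\C) = U\cap K(\C)$ is a maximal compact subgroup.

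Finally, I would argue independence of choices and the bijection with real forms. Any two compact forms are conjugate, and any two choices of the conjugating element producing the commutation differ by an element of the normalizer of $\tau$ in $G(\C)$, which is $U$ (or a finite extension thereof); a short diagram chase shows the resulting real forms are canonically isomorphic. If $\theta' = g\theta g^{-1}$, then replacing $\tau$ by $g\tau g^{-1}$ (another compact involution, conjugate to $\tau$) gives an isomorphic real form, so $G_\theta$ depends only on the $G(\C)$-conjugacy class of $\theta$ in $\mathrm{Aut}(G)$. For surjectivity, given a real reductive group $H$ with Cartan involution $\theta_H$ and complexification $G = H(\C)$, the involution $\theta_H$ extends holomorphically to $G$ and the construction returns $H$ up to isomorphism, so every real reductive group is of the form $G_\theta(\R)$.
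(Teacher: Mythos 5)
The paper itself offers no proof of this proposition: it is quoted as the classical Cartan classification with a bare reference to Serre (\emph{Galois cohomology}, Ch.~III, \S 4, Theorem 6), so there is no internal argument to compare yours against. Your sketch is the standard textbook construction and its architecture is sound: choose a compact real structure $\tau$ with $G(\C)^{\tau}=U$, use Cartan's trick (conjugating by a fourth root of the positive automorphism $(\tau\theta)^{2}$, taken inside $\exp(i\,\mathrm{Lie}\,U)$) to replace $\theta$ by a conjugate commuting with $\tau$, set $\sigma=\tau\theta$, and note that $\mathrm{Lie}\,G(\C)^{\sigma}=(\mathrm{Lie}\,U\cap\g^{\theta})\oplus i\,(\mathrm{Lie}\,U\cap\g^{-\theta})$, on which $\theta$ is a Cartan involution with fixed group $U\cap K(\C)$, the maximal compact of $G_{\theta}(\R)=G(\C)^{\sigma}$. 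Two places are glossed and would need to be shored up. First, your parenthetical describing the $-1$-eigenspace of $\theta$ as ``$i\,\mathrm{Lie}(U\cap K(\C))$-dual'' is garbled; the correct and sufficient statement is the eigenspace decomposition just displayed, from which negative/positive definiteness of the Killing form (modulo center) on the two summands is immediate. Second, well-definedness is not a ``short diagram chase'': the normalizer of $\tau$ in $G(\C)$ need not be $U$ or a finite extension of it once the center of $G$ has positive dimension, and what the argument actually requires is the standard fact that any two compact structures commuting with $\theta$ are conjugate by an element of $G^{\theta}$ (equivalently, conjugacy of Cartan involutions, or the refined form of Cartan's commutation lemma); note also that the statement asserts invariance under conjugacy in $\Aut(G)$, not merely $G(\C)$-conjugacy, which your argument does cover once you observe that conjugating the pair $(\theta,\tau)$ by an arbitrary algebraic automorphism transports the whole construction. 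With those standard ingredients made explicit, and with surjectivity obtained as you say by holomorphically extending a Cartan involution of a given real reductive group to its complexification, your proof is complete and is the expected one behind the citation.
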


\begin{example}
Let $G(\C) = \GL_{m+n}(\C)$, $\theta = \theta_{m,n}$ the involution $g\rightarrow \theta_{m,n}g\theta_{m,n}$ where $\theta_{m,n}$ is the diagonal matrix
  in $\GL_{m+n}(\C)$ with first $m$ entries $1$, and last $n$ entries $-1$. In this case, the real reductive group $\G_{\theta}$ is the group $\U(m,n)$ with maximal compact  $\U(m) \times \U(n)$ which is the compact form
of $G^\theta = \GL_m(\C) \times \GL_n(\C)$.
\end{example}

Here is the main theorem of this paper.

\begin{thm}\label{mainthm1}
  Let $\theta$ be an involutive automorphism of a connected reductive group $G$ over $k$ which is either a finite or a non-archimedean local   field of characteristic not 2 with $K=G^\theta$. 
Then if $G$ is quasi-split over $k$,  and 
if $G(k)$ has a generic representation distinguished by 
$K^1(k)$ for $K^1=[K^0,K^0]$ where $K^0$ is the connected component of identity of $K$,
one of the following equivalent conditions (for $k$ of odd characteristic) 
hold good. 
  \begin{enumerate}
  \item 
    There exists a Borel subgroup $B$ of $G(\bar{k})$ such that
    $B\cap \theta(B)$ is a maximal torus of $G$, i.e., the symmetric space
    $(G,\theta)$ is quasi-split over $\bar{k}$.
  \item Using the identification of groups and involutions over different fields given in Proposition \ref{inv}, suppose $(G,\theta)$ 
over $\bar{k}$ is associated to $(G',\theta')$ over $\C$, then the associated real reductive group $G'_{\theta'}(\R)$ 
is quasi-split over $\R$.
  \end{enumerate}
\end{thm}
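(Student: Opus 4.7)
The plan is to convert the representation-theoretic hypothesis (existence of a generic distinguished representation) into a geometric statement about orbits of $K^1$ on a flag variety, and then invoke the standard open-orbit characterization of quasi-split symmetric spaces.

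Step 1 (From distinction to a bi-equivariant distribution). Let $\pi$ be irreducible generic and $K^1(k)$-distinguished, so I have a nondegenerate Whittaker functional $\mathcal{W}\colon\pi\to\C$ relative to a Borel $B=TN$ of $G$ with character $\psi$ on $N(k)$, and a nonzero $\ell\in\Hom_{K^1(k)}(\pi,\C)$. Combining $\mathcal{W}$, $\ell$, and a matrix coefficient, I produce a nonzero element of $\Hom_{N(k)\times K^1(k)}\bigl(C_c^\infty(G(k)),\psi\boxtimes 1\bigr)$, i.e.\ a nonzero $(N\times K^1,\psi\boxtimes 1)$-equivariant distribution $D$ on $G(k)$.

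Step 2 (Orbit / support analysis). Decompose $G$ into orbits under the action of $N\times K^1$ (acting by $(n,k)\cdot g=ngk^{-1}$). By the usual Bruhat-Bernstein geometric lemma, $D$ can only be supported on an orbit $NgK^1$ for which $\psi$ vanishes on the unipotent projection of the stabilizer, namely on $N\cap gK^1g^{-1}$. I pass to $\bar k$ for dimension counts, choose a $k$-rational orbit carrying nonzero support of $D$, and use the nondegeneracy of $\psi$ (triviality on any simple-root subgroup of $N$ is forbidden) to force $\dim(N\cap gK^1g^{-1})=0$. A dimension count then yields $\dim K^1\cdot gB\ge\dim G$, so $K^1\cdot B_0$ is open in $G$ for the Borel $B_0=gBg^{-1}$.

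Step 3 (From open $K^1$-orbit to quasi-split symmetric space). Since $K^0/K^1$ is a torus central in $K^0$ and hence contained in any Borel of $K^0$, the existence of an open $K^1$-orbit on $G/B_0$ is equivalent to the existence of an open $K^0$-orbit, and hence of an open $K$-orbit. By Springer's theory of orbits of symmetric subgroups on the flag variety, $K\cdot B_0$ being open in $G$ is equivalent to $B_0\cap\theta(B_0)$ being a maximal torus of $G$, i.e.\ to $(G,\theta)$ being quasi-split over $\bar k$. This establishes (1).

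Step 4 (Equivalence of (1) and (2)). Using Proposition~\ref{inv} to transport $(G,\theta)$ to a pair $(G',\theta')$ over $\C$ and Proposition~\ref{Cartan} to produce the real form $G'_{\theta'}(\R)$, the condition that $(G',\theta')$ be quasi-split over $\C$ translates, via the standard dictionary relating Borel subgroups preserved up to opposite by $\theta'$ and minimal parabolic subgroups defined over $\R$, precisely into $G'_{\theta'}(\R)$ being quasi-split.

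The main obstacle is Step 2: extracting an open orbit from the bare existence of the equivariant distribution. A priori $D$ could live on a small orbit on which $\psi$ happens to be trivial on $N\cap gK^1g^{-1}$; ruling this out uses crucially that a nondegenerate $\psi$ is nontrivial on every root subgroup corresponding to a simple root of $T$ in $N$, combined with the fact that any nontrivial unipotent stabilizer must contain such a root subgroup after conjugating by $B$. The hypothesis $K^1=[K^0,K^0]$ rather than $K^0$ is what keeps Step 3 clean, since otherwise one would have to track the extra torus factors appearing in $K^0/K^1$.
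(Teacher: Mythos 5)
Your Step 1 and Step 4 are fine and parallel the paper (Propositions \ref{whittaker}, \ref{orbits} and the discussion around Propositions \ref{inv}, \ref{Cartan}), but Steps 2 and 3 contain genuine gaps, and between them they skip over what is in fact the entire difficulty of the theorem. In Step 2 you claim that nondegeneracy of $\psi$, together with its triviality on $N\cap gK^1g^{-1}$, forces $\dim(N\cap gK^1g^{-1})=0$, on the grounds that ``any nontrivial unipotent stabilizer must contain a simple-root subgroup after conjugating by $B$.'' This is false: a positive-dimensional unipotent subgroup of $N$ lying in $[N,N]$ (for instance a root subgroup for a non-simple positive root), or more generally one whose image in $N/[N,N]$ avoids the simple-root coordinates, kills no simple-root subgroup, and conjugation by $B$ preserves $[N,N]$, so this cannot be repaired by conjugating. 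The paper explicitly warns that the orbit supporting the Whittaker functional can be very small (even one-dimensional), i.e.\ the stabilizer can be large; the non-obvious assertion is precisely that the weak condition ``$U_x\cdot[U,U]$ contains no simple root space'' already forces the symmetric space to be quasi-split, and this is the content of Theorem \ref{mainthm}, proved via Lemma \ref{trivialonT}, Proposition \ref{basic2}, the root-system Lemma \ref{main} (the case analysis in types $A_{2n}$, $A_{2n+1}$, $D_n$, $E_6$) and the descent Lemma \ref{generic-devissage}. Your proposal offers no substitute for this step. (Even granting $\dim(N\cap gK^1g^{-1})=0$, the dimension count fails: $\dim(NgK^1)=\dim N+\dim K^1$, which is generally strictly less than $\dim G$ --- in the quasi-split case $\dim K=\dim U+\dim T-\dim A$ --- so openness of $K^1\cdot gB$ does not follow.)

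Step 3 is also incorrect as a characterization: over $\bar k$ every symmetric variety is spherical, so $K$ (hence $K^0$, hence essentially $K^1$) \emph{always} has an open orbit on $G/B$, whether or not $(G,\theta)$ is quasi-split; openness of $K\cdot B_0$ is therefore not equivalent to $B_0\cap\theta(B_0)$ being a maximal torus. The correct open-orbit-type characterization, which is the Corollary to Theorem \ref{mainthm} in the paper, is the existence of a Borel subgroup whose unipotent radical meets $K$ trivially, i.e.\ triviality of the \emph{unipotent} part of the generic stabilizer --- exactly the statement your Step 2 fails to deliver. A further, more minor, omission: the support/orbit analysis produces a stabilizer condition at the level of $k$-points, and transferring it to $\bar k$ (needed to run any geometric argument) is not automatic in positive characteristic; the paper devotes Lemma \ref{subtle} to showing that the image of $U^\theta$ in $U/[U,U]$ is a linear subspace, using the involution in an essential way.
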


\begin{remark}
The equivalence of the two conditions in Theorem \ref{mainthm1}  is part of 
Proposition 1, thus the essence of this theorem is that if $G(k)$ has a generic representation distinguished by $K(k)$, 
then there exists a Borel subgroup $B$ of $G(\bar{k})$ such that $B$ and $\theta(B)$ are opposite Borel subgroups of $G$, 
i.e., $B\cap \theta(B)$ is a maximal torus of $G$, which is what most of this paper does.
\end{remark}

\begin{remark} According to the relative local Langlands conjectures of Sakellaridis-Venkatesh, cf. \cite{SV}, 
$\theta$-quasi-split condition is equivalent to temperedness of the Plancherel decomposition of $G/G^\theta$.
\end{remark}

We summarize the flow of arguments in the paper. The paper tries  to understand when the space of locally constant compactly supported functions   
$\Sc(K(k)\backslash G(k))$
on $K(k)\backslash G(k)$ 
has a Whittaker model (for $k$ a finite or a non-archimedean local   field). 
This involves understanding orbits of $U(k)$, a maximal unipotent subgroup 
in the quasi-split group $G(k)$, on $K(k)\backslash G(k)$. 
The space $\Sc(K(k)\backslash G(k))$
has a Whittaker model if and only if one of the orbits of $U(k)$ on $K(k)\backslash G(k)$ 
supports  a Whittaker model, i.e., there exists an orbit of $U(k)$ with stabilizer say $U_x(k)$ such that the Whittaker functional
$\psi:U(k)\rightarrow \C^\times$ is trivial on $U_x(k)$.
Looking at the action of $U$ on $K\backslash G$ 
over the algebraic closure $\bar{k}$ of $k$, we can hope 
to analyze all possible stabilizers for the action of $U$ on $K\backslash G$ which is what
 the paper 
does to prove Theorem \ref{mainthm1}.  The non-obvious 
assertion which goes in the proof of this theorem is that if $U$ has a certain orbit on $K\backslash G$ 
of the form $U_x\backslash U$ which allows
a Whittaker model, so the Whittaker character is trivial on $U_x$ (notice that such an
orbit of $U$ can be even 1-dimensional), we prove that in fact $U$ must have an orbit on $K\backslash G$ with trivial
stabilizer.  We prove this using detailed structure of reductive groups (over an algebraically closed field) 
which come equipped with an involution, eventually working with root systems with involutions. We know of no
algebraic geometric explanation of why `small' orbits of $U$ on $K\backslash G$ 
of a certain kind forces $U$ to have a large
 orbit on $K\backslash G$, in fact one with no
stabilizer. This is connected with the well-known fact in representation theory that irreducible representations 
of $G(\F_q)$ with Whittaker model are `large', i.e., have dimension of the order of $q^{d(U)}$ where $d(U)$ is the dimension of a 
maximal unipotent subgroup of $G$ assumed to have connected center, cf. Theorem 10.7.7 in \cite{DL}, 
although it is not clear from the definition that this is forced. (One can  prove 
that representations  of $\GL_n(\F_q)$ with Whittaker model are `large' using the structure of the mirabolic subgroup
of $\GL_n(\F_q)$.)

Proposition 1 identifying involutions on different algebraically closed fields of characteristic not 2
is of independent interest, and  is proved in  section \ref{inv2} of the paper. In section \ref{ch2}, we propose a definition of a  symmetric space in characteristic 2, and  suggest that our main theorem of the paper extends
to symmetric spaces in characteristic 2.

Most of the paper is devoted to proving that if there is a generic 
distinguished representation for $(G,\theta)$, the symmetric space must be quasi-split. Section \ref{converse} deals with the converse: if the symmetric space is quasi-split, we prove that there is a generic irreducible representation of $G(k)$
distinguished by $G^\theta(k)$. 

We end the introduction with the following question which we answer in this paper (in the affirmative) 
for symmetric varieties.

\begin{question} Let $G$ be a reductive algebraic group over an algebraically closed field
  $\bar{k}$ operating on a spherical variety $X$. Let $U$ be a maximal unipotent subgroup 
of $G$. Suppose $U$ has an orbit on $X$ of the form $U/U_x$ such that no simple root space of $U$
is contained in $U_x\cdot [U,U]$. Then is there an orbit of $U$ on $X$ with trivial stabilizer? What if we drop the assumption on $X$ to be spherical? By Theorem 10.7.7 of \cite{DL} about dimension of generic representations of $G(\F_q)$ recalled earlier, 
one can deduce 
that $\dim X \geq \dim U$. (The first part of this question in characteristic 0 is now answered by Sakellaridis in the appendix to this paper.) 
\end{question}

\section{Generalities on groups with involution} \label{generalities}

In this section we collect together facts on groups with involutions (over an algebraically closed field $\bar{k}$ of characteristic not 2), all well-known for a long time. We refer to the article \cite{Sp} of Springer as a general reference to this section. (We allow the possibility that the involution is the identity automorphism.)

Given a connected reductive group $G$ with an involution $\theta$ on it, let $K^0$ be the identity component of $K=G^\theta$.
One defines a torus $T$  in $G$ to be $\theta$-split if $\theta(t)=t^{-1}$ for all
$t \in T$. A parabolic $P$ in $G$ is said to be $\theta$-split if $P$ and $\theta(P)$ are opposite parabolics, i.e., $P \cap \theta(P)$ is a Levi subgroup for both $P$ and $\theta(P)$.  

Here are some facts about groups with involutions.

\begin{enumerate}
\item Unless $G$ is a torus or $\theta = 1$, there are always nontrivial $\theta$-split tori in $G$.
\item Maximal $\theta$-split tori in $G$ are conjugate under $K^0$; their common dimension is called 
the rank of the symmetric space $(G,\theta)$.
\item If $Z_G(A)$ (resp $Z_{K^0}(A)$) is the  connected component of identity of the
  centralizer of a maximal $\theta$-split torus $A$ in $G$ (resp. in $K^0$),
  then $Z_G(A)=Z_{K^0}(A)\cdot A$. 
\item Minimal $\theta$-split parabolics in $G$ are conjugate under $K^0$.
\item If $A$ is a maximal $\theta$-split torus in $G$, then its
  centralizer in $G$ is a Levi subgroup for a minimal $\theta$-split parabolic $P$ in $G$.
\end{enumerate}

\begin{definition} (Split and quasi-split symmetric spaces)
A symmetric space $(G,\theta)$ over a field $k$ is said to be split if a maximal $\theta$-split torus
is a maximal torus of $G$.
A symmetric space $(G,\theta)$ is said to be quasi-split if one of the two equivalent conditions hold good:

\begin{enumerate} 
\item For a  maximal $\theta$-split torus $A$, $Z_G(A)$ is a maximal torus of $G$. 
\item There exists a $\theta$-split Borel subgroup in $G$.
\end{enumerate} 
\end{definition}

\begin{remark} \label{dim} If $(G,\theta)$ is a quasi-split symmetric space, 
$A$ a maximal $\theta$-split torus in $G$, $B$ a $\theta$-split 
Borel subgroup in $G$ with $T=Z_G(A)$, a maximal torus contained in $ B$, then in terms of Lie algebras, we have ${\mathfrak g} = {\mathfrak b} + \theta({\mathfrak b} )$, hence ${\mathfrak g} ={\mathfrak g}^\theta+ {\mathfrak b}$. Therefore, for a quasi-split symmetric space $(G,\theta)$,  
$$  \dim (K) = \dim (U) + \dim (T) -\dim(A);$$
in particular, if $(G,\theta)$ is a quasi-split symmetric space, 
 $$  \dim B > \dim (K) \geq  \dim (U) ,$$
which can be improved to the equality:
  $$  \dim (K) =  \dim (U) ,$$
if $(G,\theta)$ is a split symmetric space. Any quasi-split symmetric space is split if $G$ has no outer automorphism. To see a proof, let $B$ be a $\theta$-split Borel subgroup in $G$, i.e.,
$T= B \cap \theta(B)$ is a maximal torus in $G$, which is clearly $\theta$-invariant. Since $G$ has no outer automorphism, and since $\theta$ leaves $T$-invariant, $\theta$ must be conjugation
by an element in the Weyl group $W= N(T)/T$, in fact the longest element in the Weyl group since it takes $B$ to its opposite. However, since $G$ has no outer automorphism, the longest element
in the Weyl group is $-1$, i.e., one which acts on $T$ as: $t\rightarrow t^{-1}$. Thus the symmetric space $(G,\theta)$ is split.
\end{remark}

It is well-known that all the notions and facts above about symmetric spaces $(G,\theta)$ have analogues in the theory of algebraic groups over $\R$ from which they are derived; in particular, if $\bar{k}=\C$, 
then a group $G(\C)$  with involution $\theta$ is split or quasi-split if and only if the real group $G_\theta(\R)$
associated to the pair $(G,\theta)$ due to Cartan (cf. Proposition \ref{Cartan}) 
is split or quasi-split. If the group $G_\theta(\R)$ is split or quasi-split, then clearly the symmetric space $(G,\theta)$ is split 
or quasi-split using the torus $A$  in the Iwasawa decomposition $G_\theta(\R)=KAN$. Conversely, if the symmetric space $(G,\theta)$ is split 
or quasi-split and $A$ is a maximal $\theta$-split torus, then by a well-known theorem of Matsuki in \cite{matsuki}, $A$ has a conjugate by $G^\theta(\C)$ which is defined over $\R$,
and the corresponding torus in $G_\theta(\R)$ is split or quasi-split as is the case for $A$.

Two symmetric spaces $(G,\theta)$ and $(G,\theta')$ are said to be conjugate if 
$\theta$ and $\theta'$, as elements of the group $\Aut(G)$, are conjugate by an element of $G$, 
whereas two symmetric spaces $(G,\theta)$ and $(G,\theta')$ are said to be isomorphic if  
$\theta$ and $\theta'$ are conjugate by an element of $\Aut(G)$.

Just like uniqueness of split and quasi-split groups over any field (with a given splitting field etc.), 
given a reductive group $G$ over $\C$, the set of conjugacy  classes of 
quasi-split symmetric spaces $(G,\theta)$ 
over $\C$ is in bijective correspondence with the set  of 
involutions (i.e., elements of order $\leq 2$) in ${\rm Out}(G) = {\rm Aut}(G)/G$, 
whereas isomorphism classes of quasi-split symmetric spaces $(G,\theta)$ 
over $\C$ is in bijective correspondence with the conjugacy classes  of 
involutions in ${\rm Out}(G) = {\rm Aut}(G)/G$, cf. Theorem 6.14 of \cite{AV}. As an example, for $G=\GL_n(\C)$, there are exactly two 
conjugacy classes as well as isomorphism classes of 
quasi-split symmetric spaces $(G,\theta)$ since ${\rm Out}(\GL_n(\C))=\Z/2$.

The following definition is inspired by the theory of real groups.

\begin{definition}
Let $T=HA$ be a maximal torus for $G$ over a field $k$ left invariant by $\theta$ which operates as identity on $H$ and as 
$t\rightarrow t^{-1}$ on $A$. Then a root space $U_\alpha$ for $T$ is said to be imaginary if $\alpha:T\rightarrow k^\times$
is trivial on $A$ (equivalently, $\theta(\alpha)=\alpha$), 
real if $\alpha$ is trivial on $H$ (equivalently, $\theta(\alpha)=-\alpha$), and complex if it is neither real or imaginary
(equivalently, $\theta(\alpha)\not = \pm \alpha$). If $\alpha$ is imaginary and $U_\alpha \subset G^\theta$, then $\alpha$ is said to be a compact-imaginary root.
\end{definition}

The following well-known lemma is a consequence of Theorem 7.5 of \cite{St} applied to the connected algebraic group
$B \cap \theta(B)$ which contains a maximal torus of $G$. (The last assertion in the lemma is a consequence of 
conjugacy of maximal tori in any connected solvable algebraic group over any field $k$.) 
 
\begin{lemma} \label{tori} For a  symmetric space $(G,\theta)$ over a field  $k$ of characteristic not 2 
and $B$ a  Borel subgroup of $G$,
there exists a maximal torus $T$ of $B$ which is $\theta$-invariant. Further, any two $\theta$-invariant 
maximal tori of $B$ are conjugate under $B^\theta(k)$.
\end{lemma}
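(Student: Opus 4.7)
The plan is to reduce the entire lemma to structural properties of the closed subgroup $H := B \cap \theta(B)$ and then invoke Theorem~7.5 of \cite{St}. First I would record four basic properties of $H$: it is $\theta$-stable, since $\theta(H) = \theta(B) \cap B = H$; it is connected, by the classical fact that the intersection of two Borel subgroups of a reductive group is connected; it is solvable, since $H \subseteq B$; and it contains a maximal torus of $G$, because any two Borel subgroups of $G$ share a common maximal torus. In particular, any maximal torus of $H$ is automatically a maximal torus of $G$ (and hence of $B$), and conversely any $\theta$-invariant maximal torus $T$ of $B$ is forced to lie in $H$, because $T = \theta(T) \subset \theta(B)$.

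For the existence of a $\theta$-invariant maximal torus of $B$, observe that since $\theta$ has order $2$ and $\mathrm{char}(k) \neq 2$, the restriction $\theta\vert_H$ is a semisimple automorphism of the connected solvable group $H$. Applying Theorem~7.5 of \cite{St} to the pair $(H, \theta\vert_H)$ produces a $\theta$-stable maximal torus $T$ of $H$, which by the preceding paragraph is the desired $\theta$-invariant maximal torus of $B$.

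For the conjugacy assertion, let $T_1, T_2$ be two $\theta$-invariant maximal tori of $B$. By the containment noted in the first paragraph, both are maximal tori of the connected solvable group $H$. The conjugacy portion of Steinberg's theorem, combined with the classical fact that maximal tori in a connected solvable algebraic group over any field $k$ are conjugate by a $k$-rational element, then produces an element of $(H^\theta)^0(k) \subseteq B^\theta(k)$ conjugating $T_1$ to $T_2$. Concretely, one first picks $h \in H(k)$ with $hT_1h^{-1} = T_2$; applying $\theta$ and using $\theta(T_i) = T_i$ together with $N_B(T_1) = T_1$ forces $t := h^{-1}\theta(h) \in T_1$ to satisfy the cocycle relation $t\,\theta(t) = 1$, and one then corrects $h$ by a suitable $s \in T_1(k)$ so that $hs \in B^\theta(k)$. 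The principal obstacle is precisely this rationality step at the end: geometrically the conjugation exists inside $H$, but upgrading it to $B^\theta(k)$ requires the vanishing of an $H^1$ of the torus $T_1$ under its order-two automorphism, which the hypothesis $\mathrm{char}(k) \neq 2$ (and the classical solvable-conjugacy result cited in the paper) guarantees.
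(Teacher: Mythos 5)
Your existence argument coincides with the paper's: both apply Steinberg's Theorem 7.5 to the connected, solvable, $\theta$-stable group $H=B\cap\theta(B)$, which contains a maximal torus of $G$, and your preliminary observations (maximal tori of $H$ are maximal tori of $B$, every $\theta$-invariant maximal torus of $B$ lies in $H$, and $N_B(T_1)=T_1$) are all correct.

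The gap is in the final step of the conjugacy argument. Having chosen $h\in H(k)$ with $hT_1h^{-1}=T_2$ and produced $t=h^{-1}\theta(h)\in T_1$ with $t\,\theta(t)=1$, you claim $t$ can be written as $s\,\theta(s)^{-1}$ with $s\in T_1(k)$ because $\mathrm{char}(k)\neq 2$ forces the relevant $H^1$ of the torus to vanish. That vanishing is false for tori: the ``$2$ is invertible'' argument kills $H^1(\langle\theta\rangle,\cdot)$ only for unipotent (vector) groups --- which is exactly how the paper uses it in Lemma \ref{subtle} --- whereas a torus has nontrivial $2$-torsion; already for $T_1=\Gm$ with $\theta$ acting trivially one has $H^1(\Z/2,\bar{k}^\times)=\mu_2\neq 1$. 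So the correction step, as justified, does not go through. The repair, and what the paper's one-line citation of conjugacy of maximal tori in connected solvable groups is implicitly relying on, is to take the conjugator \emph{unipotent}: maximal tori of the connected solvable group $H$ are conjugate under (the rational points of) its unipotent part $V=H\cap U$ --- equivalently, writing $h=us$ with respect to the decomposition $H=V\rtimes T_1$ over $k$, one may replace $h$ by $u\in V(k)$. Since $V$ is $\theta$-stable, $u^{-1}\theta(u)\in V\cap N_B(T_1)=V\cap T_1=\{1\}$, so $u$ is automatically $\theta$-fixed and lies in $B^\theta(k)$; no cohomological correction is needed.
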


The following lemma appears as Lemma 2.4 in \cite{AV2}.

\begin{lemma} \label{trivialonT}
Let $\theta$ be an involution on a reductive group $G$ over 
an algebraically closed field $\bar{k}$. 
If $\theta$ operates
trivially on a maximal torus $T\subset G$, then $\theta$ must be conjugation by an element of $T$.
\end{lemma}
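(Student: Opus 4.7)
The plan is to exploit the fact that $\theta$ is trivial on $X^{*}(T)$, to deduce that $\theta$ preserves each $T$-root subgroup of $G$ and acts on it by a sign, and then to realize that sign as conjugation by a suitable element of $T$.

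First I would fix a pinning of the root datum: a system of roots $\Phi = \Phi(G,T)$, simple roots $\Delta$, and root subgroups $U_\alpha \cong \Ga$ with generators $X_\alpha$ of the corresponding root spaces in $\g$. Since $\theta$ fixes $T$ pointwise, it acts trivially on $X^{*}(T)$ and therefore sends each $U_\alpha$ into $U_{\theta(\alpha)} = U_\alpha$. Because $\theta|_{U_\alpha}$ commutes with the conjugation action of $T$, it is an automorphism of $\Ga$ commuting with scaling, i.e.\ multiplication by some $c_\alpha \in \bar{k}^\times$, and $\theta^{2} = 1$ forces $c_\alpha^{2} = 1$. Since $\mathrm{char}(\bar{k}) \ne 2$, we get $c_\alpha = \pm 1$. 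Differentiating the structure identities $[X_\alpha, X_\beta] = N_{\alpha,\beta} X_{\alpha+\beta}$ (for $\alpha + \beta \in \Phi$) and using $N_{\alpha,\beta} \ne 0$ yields $c_{\alpha + \beta} = c_\alpha c_\beta$, so $\alpha \mapsto c_\alpha$ is the restriction to $\Phi$ of a group homomorphism $Q \to \{\pm 1\} \subset \bar{k}^\times$ from the root lattice $Q \subset X^{*}(T)$.

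Next I would extend this homomorphism. Because $\bar{k}^\times$ is a divisible, hence injective, abelian group, any homomorphism $Q \to \bar{k}^\times$ extends to $X^{*}(T) \to \bar{k}^\times$; such an extension is the same as an element $t \in T(\bar{k})$ satisfying $\alpha(t) = c_\alpha$ for every $\alpha \in \Phi$. Now set
\[
\psi := \mathrm{Int}(t)^{-1} \circ \theta.
\]
This $\psi$ is an automorphism of $G$ that again fixes $T$ pointwise (both $\theta$ and $\mathrm{Int}(t)$ do), and on each $U_\alpha$ it acts by the scalar $\alpha(t)^{-1} \cdot c_\alpha = c_\alpha^{-1} c_\alpha = 1$, since $c_\alpha \in \{\pm 1\}$. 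Thus $\psi$ is the identity on $T$ and on every $U_\alpha$.

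Finally, since $G$ is connected reductive, it is generated as an algebraic group by $T$ together with the root subgroups $\{U_\alpha\}_{\alpha \in \Phi}$, so $\psi = \mathrm{id}_G$, i.e.\ $\theta = \mathrm{Int}(t)$ with $t \in T$, as required. The only mildly delicate point is the combinatorial one in the middle step: checking that the signs $c_\alpha$ really assemble into a homomorphism on $Q$ (and hence lift to a character of $T$). This is where the structure constants of the Lie algebra, together with $\mathrm{char}(\bar{k}) \ne 2$ to distinguish $+1$ from $-1$, enter the argument; everything else is formal.
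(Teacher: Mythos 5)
Your strategy --- linearize $\theta$ on every root subgroup, assemble the signs $c_\alpha$ into a character of $T$, and conclude by generation of $G$ by $T$ and the $U_\alpha$ --- is a genuinely different route from the paper's, and it can be made to work; but the middle step, which you yourself single out as the delicate one, has two real gaps as written. First, the relations you derive, $c_{\alpha+\beta}=c_\alpha c_\beta$ for pairs of roots whose sum is again a root, never involve the pair $(\alpha,-\alpha)$, so they do not by themselves show that $\alpha\mapsto c_\alpha$ is the restriction of a homomorphism on $Q$: you still need $c_{-\alpha}=c_\alpha^{-1}$, and you need to propagate the relation from simple roots to all positive roots (a height induction, writing a non-simple positive root as $\gamma+\alpha_i$ with $\alpha_i$ simple). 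The rank-one case makes the omission visible: for $\SL_2$ there are no pairs of roots summing to a root, so your argument as written produces no constraint at all, while the identity $c_{-\alpha}=c_\alpha^{-1}$ is exactly the whole content there. This is repairable: apply $d\theta$ to $[X_\alpha,X_{-\alpha}]$, which is a nonzero element of $\mathrm{Lie}(T)$ when the characteristic is not $2$ (the coroot has divisibility at most $2$ in $X_*(T)$), and is fixed by $d\theta$. Second, the blanket claim $N_{\alpha,\beta}\neq 0$ fails in small positive characteristic: structure constants $\pm 3$ occur in type $G_2$ and vanish in characteristic $3$, which is within the scope in which the paper uses this lemma (any odd characteristic). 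This too can be patched, e.g.\ by replacing the Lie bracket with the group-level Chevalley commutator relations, whose relevant constants in the $G_2$ case are $\pm 1$, or by treating the exceptional configurations separately; but the step as stated breaks there.

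For comparison, the paper's proof sidesteps both issues. It chooses $t_0\in T$ so that $\Ad(t_0)\circ\theta$ is trivial merely on the \emph{simple} root spaces --- this needs no multiplicativity, only that the simple roots are linearly independent characters of $T$ --- hence trivial on $B$, and then concludes that an automorphism trivial on $B$ is the identity because $g\mapsto \theta'(g)g^{-1}$ descends to a morphism from the projective variety $G/B$ to the affine variety $G$, which must be constant. That rigidity argument replaces both your extension of $c$ to a character of $T$ (via divisibility of $\bar{k}^\times$) and the appeal to generation of $G$ by $T$ and all root subgroups, and it is insensitive to vanishing structure constants. What your approach buys, once the two points above are supplied, is a completely explicit, pinning-level construction of the conjugating element $t$; what the paper's buys is uniform validity in all characteristics $\neq 2$ with essentially no case analysis.
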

\begin{proof}Since $\theta$ operates trivially on $T$, it operates trivially on the character group of $T$, and hence 
takes any root of $T$ to itself. Therefore, $\theta$ takes all root spaces of $T$ to itself, hence preserves any Borel
subgroup $B$ of $G$ containing $T$. 
Clearly, there is a $t_0 \in T$ 
such that the automorphism $\theta'={\rm Ad}(t_0) \circ \theta $ acts
trivially on all simple root spaces of $T$ in $B$, and hence on $B$. Thus we have an automorphism $\theta'$ of $G$ 
acting trivially on $B$. It is well-known  that such an automorphism of $G$ must be identity since $\theta'(g)\cdot g^{-1}: G \rightarrow G$ descends to give a morphism from $G/B$ to $G$, but there are no non-constant morphisms from a connected projective variety to an affine variety, proving that $\theta'(g)=g$ for all $g \in G$, hence $\theta$ is inner-conjugation by an element of $T$ as desired.
\end{proof}

\begin{lemma} \label{generic-devissage}
For a  symmetric space $(G,\theta)$ over an algebraically closed field $\bar{k}$, let $A$ be a  
$\theta$-split torus in $G$. 
Then $G$ is $\theta$-quasi-split if and only if $Z_G(A) = \{g\in G|gag^{-1}=a, \forall a \in A\}$
which is $\theta$-invariant is $\theta$-quasi-split.
\end{lemma}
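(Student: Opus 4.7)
My plan is to prove both implications by reducing to the definition of quasi-split in terms of centralizers of maximal $\theta$-split tori, and by relating maximal $\theta$-split tori of $G$ to those of the Levi $Z_G(A)$.

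First, note that $A$ being $\theta$-stable (since $\theta$ acts by inversion on $A$) implies $Z_G(A)$ is $\theta$-invariant, and it is a connected Levi subgroup of $G$ since $A$ is a torus. Throughout I will use the characterization of quasi-split from the definition: $(H,\theta)$ is quasi-split if and only if for some (equivalently every) maximal $\theta$-split torus $A_H$ in $H$, $Z_H(A_H)$ is a maximal torus of $H$.

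For the forward direction, assume $G$ is $\theta$-quasi-split. Extend $A$ to a maximal $\theta$-split torus $A_0$ of $G$; then by hypothesis $T := Z_G(A_0)$ is a maximal torus of $G$. Since $A_0 \supset A$, we have $A_0 \subset Z_G(A)$. I will first show that $A_0$ is a maximal $\theta$-split torus of $Z_G(A)$: any $\theta$-split torus of $Z_G(A)$ containing $A_0$ is a $\theta$-split torus of $G$ containing $A_0$, hence equals $A_0$ by maximality in $G$. Next, observe that $Z_{Z_G(A)}(A_0) = Z_G(A_0) \cap Z_G(A) = T$, because $T = Z_G(A_0)$ already centralizes $A \subset A_0$. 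Since $T$ is a maximal torus of $G$, it is a fortiori a maximal torus of $Z_G(A)$. Hence $Z_G(A)$ is $\theta$-quasi-split.

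For the reverse direction, assume $Z_G(A)$ is $\theta$-quasi-split. Choose a maximal $\theta$-split torus $A'$ of $Z_G(A)$; by hypothesis $T := Z_{Z_G(A)}(A')$ is a maximal torus of $Z_G(A)$, hence of $G$. The torus $A$ is central in $Z_G(A)$ and $\theta$-split, so $A \cdot A'$ is a $\theta$-split torus of $Z_G(A)$ containing $A'$, which forces $A \subset A'$ by maximality. I then claim $A'$ is a maximal $\theta$-split torus of $G$: any $\theta$-split torus $A''$ of $G$ containing $A'$ centralizes $A \subset A'$, so $A'' \subset Z_G(A)$, and maximality of $A'$ inside $Z_G(A)$ gives $A'' = A'$. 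Finally, $Z_G(A') \subset Z_G(A)$ because $A \subset A'$, so $Z_G(A') = Z_{Z_G(A)}(A') = T$ is a maximal torus of $G$, proving $G$ is $\theta$-quasi-split.

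The proof is essentially bookkeeping; the only point that needs care is the elementary observation that a maximal $\theta$-split torus of $Z_G(A)$ must contain the central $\theta$-split torus $A$, and conversely that $G$-maximality of a $\theta$-split torus containing $A$ can be tested inside the Levi $Z_G(A)$. I do not anticipate any substantive obstacle beyond these two compatibilities.
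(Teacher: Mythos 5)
Your proof is correct and follows essentially the same route as the paper: both reduce quasi-splitness to the condition that the centralizer of a maximal $\theta$-split torus is a maximal torus, extend $A$ to such a torus, and use the identity $Z_{Z_G(A)}(A_0)=Z_G(A_0)$ coming from $A\subset A_0$. Your converse is slightly more explicit than the paper's (you verify that a maximal $\theta$-split torus of $Z_G(A)$ contains $A$ and remains maximal in $G$, where the paper simply reuses the same $A_0$ and implicitly invokes conjugacy of maximal $\theta$-split tori), but this is bookkeeping rather than a different argument.
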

\begin{proof} To say that $G$ is $\theta$-quasi-split is equivalent to say that there exists a maximal $\theta$-split 
torus
in $G$, say $A_0$, whose centralizer in $G$ is a maximal torus in $G$. We can assume that $A \subset A_0$, therefore
$A_0\subset Z_G(A)$, and is a maximal $\theta$-split torus in $Z_G(A)$. It follows that if 
the centralizer of $A_0$ in $G$ is a torus, then the centralizer of $A_0$ in $Z_G(A)$ is a torus too. Conversely,
since the centralizer of $A_0$ in $G$ 
is contained in $Z_G(A)$, if 
the centralizer of $A_0$ in $Z_G(A)$ is a torus, 
the centralizer of $A_0$ in $G$ is the same torus. \end{proof}

\section{Whittaker model}
In this section $k$ is a finite or a non-archimedean local   field, $G(k)$ is the group of 
$k$-rational points of a connected quasi-split
reductive group $G$ with $U(k)$ the $k$-rational points of a
fixed maximal connected unipotent group $U$ of $G$, and $\psi:U(k)\rightarrow \C^\times$ a non-degenerate character of $U(k)$, and $B$ the normalizer of $U$ in $G$. By Lemma \ref{tori}, the Borel subgroup $B$ contains a $\theta$-invariant torus $T$ defined over $k$. Whenever we use root spaces in $U$ or in $B$, it is with respect to such a $\theta$-invariant  torus $T$.  It may be noted 
that since tori in $B$ are conjugate under $U(k)$, although the notion of a root space depends on the choice of $T$, but 
the notion of a simple root space in $U/[U,U]$ is independent of the choice of $T$.  We will abuse notation to denote $B,T,U$ also 
for $B(k),T(k),U(k)$. 

For any smooth representation $\pi$ of $G(k)$, $\pi_{U,\psi}$ denotes the largest quotient of $\pi$ on which $U(k)$ operates by the character $\psi$. The representation $\pi$ is said to be {\it generic} if $\pi_{U,\psi} \not = 0$. An important property of the functor $\pi \rightarrow \pi_{U,\psi}$ is that it is exact. In what follows, $\Sc(X)$ denotes the space of compactly supported locally constant functions on a topological space $X$.

\begin{proposition} \label{whittaker} If there exists an irreducible admissible representation $\pi$ of $G(k)$ which is distinguished by $K(k)$ and for which $\pi^\vee_{U,\psi} \not = 0$, then
  $$\Sc(K(k)\backslash G(k))_{U,\psi}\not = 0.$$
\end{proposition}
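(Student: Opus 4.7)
The plan is to produce a nonzero $G(k)$-equivariant map from $\Sc(K(k)\backslash G(k))$ onto $\pi^\vee$, and then apply the stated exactness of the $(U,\psi)$-coinvariants functor $V \mapsto V_{U,\psi}$. This amounts to a Frobenius reciprocity argument for compact induction from the closed subgroup $K(k)$, combined with irreducibility of the contragredient.

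First I would pick a nonzero $K(k)$-invariant vector $\ell$ in $\pi^\vee$, which exists because $\pi$ is $K(k)$-distinguished (so $\Hom_{K(k)}[\pi,\C] = (\pi^\vee)^{K(k)}$ is nonzero). Since $\pi^\vee$ is smooth, $\ell$ is also fixed by some compact open subgroup $U_0 \subset G(k)$ (the condition is vacuous in the finite field case). The key observation is that the map $g \mapsto \pi^\vee(g^{-1})\ell$ from $G(k)$ to $\pi^\vee$ is left $K(k)$-invariant, because $\pi^\vee(k)\ell = \ell$ for all $k \in K(k)$, so it descends to a smooth $\pi^\vee$-valued function on $K(k)\backslash G(k)$. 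I would then define
$$T : \Sc(K(k)\backslash G(k)) \longrightarrow \pi^\vee, \qquad T(\varphi) \;=\; \int_{K(k)\backslash G(k)} \varphi(\bar g)\, \pi^\vee(g^{-1})\ell \, d\bar g,$$
interpreting the integral as a finite linear combination (the integrand is locally constant in $\bar g$ and $\varphi$ has compact support). A routine change-of-variables check shows $T$ is $G(k)$-equivariant with $G(k)$ acting on the source by right translation, and evaluating $T$ on $\varphi = \mathbf{1}_{KU_0/K}$ produces a positive scalar times $\ell$, so $T \neq 0$.

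Because $\pi$ is irreducible admissible, its smooth contragredient $\pi^\vee$ is also irreducible, so any nonzero $G(k)$-equivariant map $T: \Sc(K(k)\backslash G(k)) \to \pi^\vee$ must be surjective. Applying the exact functor $V \mapsto V_{U,\psi}$ (the exactness is recalled in the paragraph preceding the proposition) yields a surjection
$$T_{U,\psi} : \Sc(K(k)\backslash G(k))_{U,\psi} \twoheadrightarrow \pi^\vee_{U,\psi}.$$
Since $\pi^\vee_{U,\psi} \neq 0$ by hypothesis, the source is nonzero, which is exactly what we want. There is no substantive obstacle here: the argument is a formal consequence of Frobenius reciprocity from $K(k)$ to $G(k)$, irreducibility of the contragredient, and exactness of the twisted Jacquet-type functor. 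The only delicate point in the $p$-adic setting is checking that the integral defining $T$ is really a well-defined finite sum and that $T$ is $G(k)$-equivariant, but both are standard.
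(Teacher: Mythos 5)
There is a genuine gap at the very first step. You assert that distinction gives a nonzero vector $\ell$ in $(\pi^\vee)^{K(k)}$, via the identification $\Hom_{K(k)}[\pi,\C]=(\pi^\vee)^{K(k)}$. This identification is false in the non-archimedean case: an element of $\Hom_{K(k)}[\pi,\C]$ is a $K(k)$-invariant linear functional on $\pi$, i.e.\ a $K(k)$-fixed vector in the \emph{full} linear dual $\pi^*$, and such a functional is a vector of the smooth dual $\pi^\vee$ only if it is in addition fixed by some compact open subgroup of $G(k)$ — a much stronger condition that distinction does not provide (one has only $(\pi^\vee)^{K(k)}\subseteq\Hom_{K(k)}[\pi,\C]$, and for a non-compact symmetric subgroup $K(k)$ the smaller space is typically zero even when $\pi$ is distinguished; the situation is exactly analogous to Whittaker functionals, which are almost never smooth vectors of $\pi^\vee$). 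Since your map $T(\varphi)=\int\varphi(\bar g)\,\pi^\vee(g^{-1})\ell\,d\bar g$ is built out of this possibly nonexistent $\ell$, the construction collapses. (In the finite field case everything is finite dimensional and your identification is harmless, but the proposition is needed over local fields.)

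The repair is the route the paper takes: use the duality
$\Hom_{G(k)}[\ind_{K(k)}^{G(k)}\C,\pi^\vee]\cong\Hom_{G(k)}[\pi,\Ind_{K(k)}^{G(k)}\C]\cong\Hom_{K(k)}[\pi,\C]$
(valid since $K$ and $G$ are unimodular), which converts the given invariant functional $\lambda\in\Hom_{K(k)}[\pi,\C]$ — with no smoothness assumption — into a nonzero $G(k)$-map $\Sc(K(k)\backslash G(k))\to\pi^\vee$; concretely one sends $\varphi$ to the functional $v\mapsto\int_{K(k)\backslash G(k)}\varphi(\bar g)\,\lambda(\pi(g)v)\,d\bar g$, which is smooth because $\varphi$ is fixed by a compact open subgroup. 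From that point on your argument is the paper's: $\pi^\vee$ is irreducible, so it is a quotient of $\Sc(K(k)\backslash G(k))$, and exactness of the twisted Jacquet functor together with $\pi^\vee_{U,\psi}\neq 0$ gives $\Sc(K(k)\backslash G(k))_{U,\psi}\neq 0$.
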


\begin{proof} By Frobenius reciprocity, the representation $\pi$ of $G(k)$ is distinguished by $K(k)$ if and only if $\pi^\vee$ appears as a quotient of $
  \ind_{K(k)}^{G(k)} \C$:
  $$\Hom_{G(k)}[\ind_{K(k)}^{G(k)} \C, \pi^\vee] \cong \Hom_{G(k)}[\pi, \Ind_{K(k)}^{G(k)} \C] \cong \Hom_{K(k)}[\pi, \C].$$ 

  Since twisted Jacquet functor is exact,
  if $\pi^\vee$ is a quotient of
  $\Sc(K(k)\backslash G(k))$, 
and if  $\pi^\vee_{U,\psi} \not = 0$, then clearly
  $$\Sc(K(k)\backslash G(k))_{U,\psi}\not = 0,$$
proving the proposition.
\end{proof}

\begin{proposition} \label{orbits} With the notation as before,
  $\Sc(K(k)\backslash G(k))_{U,\psi}\not = 0$ 
if and only if there exists an orbit of $U(k)$ on $K(k)\backslash G(k)$
  passing through say $K(k)\cdot x \in K(k)\backslash G(k)$,
  so of the form $U_x(k)\backslash U(k)$ with $U_x(k) = x^{-1}K(k)x\cap U(k)$
  such that $\psi$ is trivial on $U_x(k)\cdot [U,U](k)$.  \end{proposition}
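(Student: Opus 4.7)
The strategy is to partition $K(k) \backslash G(k)$ into right $U(k)$-orbits and analyze the contribution of each orbit. A direct stabilizer computation shows that the stabilizer of $K(k) \cdot x$ under right translation by $U(k)$ is $U_x(k) = x^{-1} K(k) x \cap U(k)$, so the $U(k)$-orbit through $K(k) \cdot x$ is $U(k)$-equivariantly isomorphic to $U_x(k) \backslash U(k)$.

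Over a finite field, $\Sc(K(k) \backslash G(k))$ decomposes literally as a finite direct sum $\bigoplus_x \ind_{U_x(k)}^{U(k)} \C$ indexed by orbit representatives. Over a non-archimedean local field, $U(k)$-orbits on $K(k) \backslash G(k)$ are locally closed (being the $k$-points of algebraic $U$-orbits on $K \backslash G$), and the standard Bernstein--Zelevinsky geometric-lemma filtration endows $\Sc(K(k) \backslash G(k))$ with a $U(k)$-stable filtration whose subquotients are again the compactly induced representations $\ind_{U_x(k)}^{U(k)} \C$. Since the twisted Jacquet functor $V \mapsto V_{U,\psi}$ is exact, $\Sc(K(k)\backslash G(k))_{U,\psi} \neq 0$ if and only if $(\ind_{U_x(k)}^{U(k)} \C)_{U,\psi} \neq 0$ for at least one orbit representative $x$.

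The remaining point is the Shapiro-type identity $(\ind_{U_x(k)}^{U(k)} \C)_{U,\psi} \cong \C_{U_x(k),\, \psi|_{U_x(k)}}$, which is $\C$ when $\psi$ is trivial on $U_x(k)$ and $0$ otherwise. Concretely, when $\psi|_{U_x(k)} = 1$ the integration map $F \mapsto \int_{U_x(k) \backslash U(k)} F(u) \psi(u)^{-1}\, du$ is well defined and provides an explicit nonzero $(U,\psi)$-equivariant functional, so the coinvariant quotient does not vanish; conversely, when $\psi|_{U_x(k)} \neq 1$, picking $u_0 \in U_x(k)$ with $\psi(u_0) \neq 1$ and exploiting the Shapiro isomorphism forces every element of $\ind_{U_x(k)}^{U(k)} \C$ to be a sum of expressions of the form $u \cdot F - \psi(u) F$, killing the twisted coinvariant quotient. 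Finally, since $\psi$ is by construction trivial on $[U, U](k)$, triviality of $\psi$ on $U_x(k)$ is equivalent to triviality on $U_x(k) \cdot [U, U](k)$, which is the form stated in the proposition.

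The main point requiring care, beyond the elementary finite-field case, is the non-archimedean input: verifying the local-closedness of $U(k)$-orbits so that the Bernstein--Zelevinsky filtration applies, and establishing the Shapiro identity for twisted coinvariants of a compactly induced smooth representation of a $p$-adic unipotent group. Both are standard within the Bernstein--Zelevinsky framework but are the only inputs that go beyond a direct finite-dimensional argument.
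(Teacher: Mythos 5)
Your finite-field argument and your ``if'' direction are fine, and they match the paper in substance: a single $U(k)$-orbit, being locally closed, makes $\ind_{U_x(k)}^{U(k)}\C$ a subquotient of $\Sc(K(k)\backslash G(k))$, and exactness of the twisted Jacquet functor plus Frobenius reciprocity (your Shapiro identity) finishes that implication. The genuine gap is in the ``only if'' direction over a non-archimedean field: there is no ``Bernstein--Zelevinsky geometric-lemma filtration'' of $\Sc(K(k)\backslash G(k))$ with one subquotient $\ind_{U_x(k)}^{U(k)}\C$ per $U(k)$-orbit. The geometric lemma produces such filtrations only when the acting group has finitely many orbits (as for parabolic double cosets); here the $U(k)$-orbits on $K(k)\backslash G(k)$ typically form continuous, uncountable families (already for $G=\SL_2$ with $K$ a torus the generic $U$-orbits are one-dimensional in a two-dimensional space), so $\Sc(K(k)\backslash G(k))$ admits no exhaustion by orbit strata, and exactness alone cannot push nonvanishing of the twisted coinvariants down to a single orbit. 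What replaces the filtration — and what the paper actually invokes — is the Bernstein--Zelevinsky localization principle for equivariant distributions (Theorem 6.9 of \cite{BZ}): a nonzero $(U(k),\psi)$-equivariant distribution on $K(k)\backslash G(k)$ must restrict nontrivially to some orbit, the key hypothesis being that the action is \emph{constructible}, which is their Theorem A for algebraic actions of $p$-adic groups. One then notes that an orbit $U_x(k)\backslash U(k)$ carries a $\psi$-equivariant distribution only if $\psi$ is trivial on $U_x(k)$.

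A secondary caveat: your parenthetical justification of local-closedness, namely that $U(k)$-orbits are ``the $k$-points of algebraic $U$-orbits,'' is not accurate in general — $K(k)\backslash G(k)$ is only a closed subset of $(K\backslash G)(k)$, and since $k$ may have positive (odd) characteristic the stabilizer $U_x$ need not be connected or smooth, so the $U(k)$-orbit can be a proper subset of the $k$-points of the algebraic orbit. This is again exactly the point of citing Theorem A of \cite{BZ}, which yields constructibility (in particular local closedness of $U(k)$-orbits) without such identifications. With the filtration claim replaced by the localization principle, your argument becomes the paper's proof.
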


\begin{proof} It is a consequence of Theorem 6.9 of \cite{BZ} that if the group $H(k)$ of $k$-rational points 
of any algbraic group $H$ (over $k$) operates on 
an algebraic  variety $X(k)$ algebraically, and there is a distribution on $X(k)$ supported on a closed subset $X'\subset X(k)$
on which $H(k)$ operates via a character
$m:H(k)\rightarrow \C^\times$, then there must be an orbit of $H(k)$ on $X'\subset X(k)$ which carries 
a distribution   on which $H(k)$ operates via the character
$m:H(k)\rightarrow \C^\times$. (An essential component of this theorem of Bernstein-Zelevinsky is their Theorem A, proved in the appendix to \cite{BZ} for all non-archimedean local fields, that the action of
$H(k)$ on $X(k)$ is always {\it constructible}.)

Therefore if   $\Sc(K(k)\backslash G(k))_{U,\psi}\not = 0$, there is an orbit of $U(k)$ 
on
  $K(k)\backslash G(k) \subset (K\backslash G)(k)$
carrying a distribution on which $U(k)$ operates via $\psi: U(k)\rightarrow \C^\times$ (thus we are applying Theorem 6.9 of \cite{BZ} in the notation above to $H=U$, 
$X=K\backslash G$, $X'=K(k)\backslash G(k) \subset (K\backslash G)(k)$). 

Conversely, if an orbit of $U(k)$ on $K(k)\backslash G(k)$ carries a Whittaker functional, 
$\Sc(K(k)\backslash G(k))_{U,\psi}\not = 0$. This follows because

\begin{enumerate}
 \item $\pi \rightarrow \pi_{U,\psi}$ is an exact functor on the category of smooth representations of $U(k)$.
 \item By Theorem A of Bernstein-Zelevinsky, proved in the appendix to \cite{BZ}, the action of $U(k)$ on $K(k)\backslash G(k)$ is constructible.
\end{enumerate}

If the orbit of $U(k)$
carrying a distribution on which $U(k)$ operates via $\psi: U(k)\rightarrow \C^\times$ is
$U_x(k)\backslash U(k)$, by Frobenius reciprocity 
  $$\Hom_{U(k)}[\ind_{U_x(k)}^{U(k)} \C,
    \psi] \cong \Hom_{U(k)}[\psi^{-1}, \Ind_{U_x(k)}^{U(k)} \C] \cong \Hom_{U_x(k)}[\psi^{-1}, \C].$$
Therefore, for $\ind_{U_x(k)}^{U(k)} \C$ to have Whittaker model for the character $\psi$, $\psi$ must be trivial on $U_x(k)$. But $\psi$ being a character on $U(k)$, it is automatically trivial on $[U,U](k)$, 
proving the proposition.
\end{proof}

\begin{proposition} \label{secondlast}Let $G$ be a quasi-split reductive algebraic group over a field $k$ which is either a finite field or a non-archimedean local field. Let $\theta$ be an involution on $G$ with $K=G^\theta$. 
Then if $\Sc(K(k)\backslash G(k))$ has a Whittaker model there exists  a maximal unipotent subgroup $U$ of 
$G$ such that 
  $U^\theta(k)\cdot [U,U](k)$ contains no simple root space of $U(k)$.  \end{proposition}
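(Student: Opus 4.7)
The plan is to apply Proposition \ref{orbits} to extract an orbit carrying the Whittaker character, and then conjugate the ambient maximal unipotent $U$ into a position where its own $\theta$-fixed part has the required property. By hypothesis $\Sc(K(k) \backslash G(k))_{U,\psi} \neq 0$, so Proposition \ref{orbits} produces an element $x \in G(k)$ with stabilizer $U_x(k) = x^{-1}K(k)x \cap U(k)$ such that $\psi$ is trivial on $U_x(k) \cdot [U,U](k)$. Since the Whittaker character $\psi$ is non-degenerate, its restriction to every simple root space of $U(k)$ is non-trivial; hence $U_x(k) \cdot [U,U](k)$ contains no simple root space of $U(k)$.

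Next I would replace $U$ by its $G(k)$-conjugate $U' := xUx^{-1}$, which is another $k$-rational maximal unipotent subgroup of $G$. A direct computation gives
\[
x U_x x^{-1} \;=\; x(U \cap x^{-1}Kx)x^{-1} \;=\; xUx^{-1} \cap K \;=\; (U')^\theta,
\]
so on $k$-points $U'(k)^\theta = xU_x(k)x^{-1}$, and evidently $[U',U'](k) = x[U,U](k)x^{-1}$. Conjugation by $x$ is therefore an isomorphism of pairs carrying $U_x(k) \cdot [U,U](k)$ onto $U'(k)^\theta \cdot [U',U'](k)$. Once I verify that this conjugation sends simple root spaces of $U$ to simple root spaces of $U'$, I will be able to conclude that $U'(k)^\theta \cdot [U',U'](k)$ contains no simple root space of $U'(k)$, which proves the proposition with $U'$ in place of $U$.

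The main subtlety is to confirm that the notion of a ``simple root space'' is intrinsic to the abelianization $U/[U,U]$ (equivalently, $U'/[U',U']$) rather than a feature depending on the choice of a particular maximal torus. Using Lemma \ref{tori}, fix a $\theta$-invariant maximal torus $T \subset B = N_G(U)$; any other maximal torus of $B$ is $U$-conjugate to $T$, and since $U$ acts trivially on $U/[U,U]$, the collection of simple root spaces of $T$ in $U$ projects to a canonical decomposition of $U/[U,U]$ that is independent of $T$. Transporting this picture by conjugation by $x$ identifies the canonical simple root space decompositions of $U/[U,U]$ and $U'/[U',U']$, so the property ``contains a simple root space'' is preserved. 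This completes the argument; the only non-bookkeeping ingredient is the input of Proposition \ref{orbits}, and the remainder is a translation of orbits by an element of $G(k)$.
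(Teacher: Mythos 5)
Your proposal is correct and follows essentially the same route as the paper: the paper simply declares the proposition a direct consequence of Proposition \ref{orbits}, leaving implicit precisely the steps you spell out, namely that non-degeneracy of $\psi$ rules out simple root spaces inside $U_x(k)\cdot[U,U](k)$, and that conjugating by $x\in G(k)$ turns $U_x = x^{-1}Kx\cap U$ into $(xUx^{-1})^\theta$ while preserving the intrinsic simple-root-space decomposition of $U/[U,U]$ (the paper's remark after the definition of property $[G]$).
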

\begin{proof} This proposition is a direct consequence of the previous proposition.
\end{proof}

The earlier discussion motivates us to make the following definition.

\begin{definition}(Property $[G]$)
A  symmetric space $(G,\theta)$ over a field $k$ is said to have property $[G]$ ($G$ for {\it generic})
if there exists a Borel subgroup $B$ of $G$ with unipotent radical $U$, and a 
$\theta$-invariant maximal torus $T$ inside $B$ 
such that none of the simple root subgroups of $B$ with respect to $T$ are contained in $U^\theta \cdot [U,U]$.
\end{definition}

\begin{remark} A maximal unipotent subgroup $U(k)$ of $G(k)$ determines the Borel subgroup $B(k)$ as the normalizer 
of $U(k)$. Any two maximal tori in $B(k)$ are conjugate under $U(k)$, and therefore the simple root spaces
for $(U/[U,U])(k)$ are independent of the choice of a  maximal torus in $B(k)$; thus in the definition above of property $[G]$, the choice of $T$ is redundant. A similar remark applies when we talk of root spaces in $U$ in other places in the paper.
\end{remark}

\begin{definition} (Algebraic Whittaker character)
Let $G$ be a quasi-split group over any field $k$ and
$U$ a maximal unipotent subgroup of $G$ defined over $k$.
A homomorphism of algebraic groups $\ell:U\rightarrow {\mathbb G}_a$ defined over $k$ will be said to be an {\it algebraic Whittaker character}
if it is nontrivial restricted to each simple root subspace 
of $U$ (with respect to a maximally split torus $A$ over $k$ normalizing $U$). \end{definition}

An {\it algebraic Whittaker character} over $\bar{k}$ is unique up to conjugacy by 
$T(\bar{k})$. If $k$ is either a finite field or is a local field, and
$\ell$ is defined over $k$, then 
fixing a nontrivial character
$\psi_0:k\rightarrow \C^\times$ allows 
one to construct a  Whittaker character in the usual sense $\psi_0\circ \ell: U(k) \rightarrow \C^\times$.
The map $\ell \rightarrow \psi_0\circ \ell$ is a bijection between algebraic 
Whittaker characters and Whittaker characters on $U(k)$.
Thus, algebraic Whittaker characters are more basic objects being defined over {\it any} field $k$ 
capturing all the attributes of a Whittaker character.

The discussion so far in this section is summarized in the following proposition.

\begin{proposition} \label{final} Let $k$ be either a finite field, or a non-archimedean local  
  field. Let $(G,K)$ be a symmetric space with $G$ quasi-split over $k$. 
If for any point $x \in K(\bar{k}) \backslash G(\bar{k})$, 
  for $U_x(\bar{k}) = x^{-1}K(\bar{k})x\cap U(\bar{k})$,
$U_x(\bar{k}) \cdot [U,U]$,
 contains a simple root space of $U$, then there is no
  irreducible admissible 
representation of $G(k)$ distinguished by $K(k)$ which is generic.   
  \end{proposition}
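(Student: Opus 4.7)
The plan is to establish the contrapositive: assuming there exists an irreducible admissible generic representation $\pi$ of $G(k)$ distinguished by $K(k)$, I would produce a point $x \in K(\bar{k})\backslash G(\bar{k})$ for which $U_x(\bar{k})\cdot [U,U]$ contains no simple root space of $U$.

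First I would chain together the three earlier propositions of this section. Proposition \ref{whittaker} converts the hypothesis on $\pi$ into the nonvanishing $\Sc(K(k)\backslash G(k))_{U,\psi} \neq 0$, and Proposition \ref{orbits} then produces a specific $x \in G(k)$ whose $U(k)$-orbit on $K(k)\backslash G(k)$ carries a Whittaker functional --- equivalently, $\psi$ is trivial on $U_x(k)\cdot [U,U](k)$, where $U_x = x^{-1}Kx \cap U$ is viewed as a closed $k$-subgroup of $U$. The same $x$, regarded as a $\bar{k}$-point of $K\backslash G$, is my candidate for the conclusion.

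Next I would show, by contradiction, that $U_x(\bar{k})\cdot [U,U]$ contains no simple root space of $U$. Suppose $U_\alpha \subset U_x(\bar{k})\cdot [U,U]$ for some simple root $\alpha$ of the $\theta$-invariant maximal torus $T$ in $B=TU$. Because $x \in G(k)$ and $K, T, U$ are all defined over $k$, the subgroup $U_x \cdot [U,U]$ is a Galois-stable closed $k$-subgroup of $U$; thus it also contains the full Galois orbit $V := \prod_{\sigma \in \Gal(\bar{k}/k)} U_{\sigma\alpha}$, which is itself a $k$-defined subgroup of $U$ whose $k$-points form the ``$k$-simple root subgroup'' attached to the restricted root $\bar\alpha = \alpha|_A$ for a maximal $k$-split torus $A \subset T$. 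Using vanishing of Galois cohomology for connected unipotent $k$-groups (valid over both finite fields and non-archimedean local fields of characteristic not $2$), I obtain $(U_x\cdot [U,U])(k) = U_x(k)\cdot [U,U](k)$, so $V(k) \subset U_x(k)\cdot [U,U](k)$. But $\psi = \psi_0 \circ \ell$ is built from an algebraic Whittaker character $\ell$, hence non-trivial on $V(k)$, contradicting triviality of $\psi$ on $U_x(k)\cdot [U,U](k)$.

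The main obstacle is precisely this $\bar{k} \leftrightarrow k$ passage: the hypothesis of the proposition is a geometric statement about $\bar{k}$-points, while non-degeneracy of $\psi$ is a $k$-rational condition on Galois orbits of simple root subgroups. The two observations that bridge this gap are (i) Galois invariance of the $k$-subgroup $U_x$, which forces any absolute simple root subspace in $U_x(\bar{k})\cdot [U,U]$ to drag its entire Galois orbit along, and (ii) vanishing of $H^1(k,-)$ for connected unipotent $k$-subgroups of $U$, which lets the $\bar{k}$-containment descend to $k$-points. Everything else is a direct unwinding of Propositions \ref{whittaker} and \ref{orbits}, which already embed the exactness of the Whittaker functor and the Bernstein--Zelevinsky constructibility theorem.
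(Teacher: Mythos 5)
Your overall skeleton is the same as the paper's: chain Propositions \ref{whittaker} and \ref{orbits} to get a point $x$ and a Whittaker character $\psi=\psi_0\circ\ell$ trivial on $U_x(k)\cdot[U,U](k)$, and then bridge between $k$-points and $\bar{k}$-points of the stabilizer. The gap is in the bridge. Your claim that $(U_x\cdot[U,U])(k)=U_x(k)\cdot[U,U](k)$ ``by vanishing of Galois cohomology for connected unipotent $k$-groups'' is exactly the delicate point, and the cited vanishing is not available in the generality of the proposition. It holds over $p$-adic fields (characteristic zero) and over finite fields (perfect field, so smooth connected unipotent groups are split, or use Lang's theorem), but the proposition also covers non-archimedean local fields of odd positive characteristic, which are non-perfect; there, a unipotent $k$-subgroup of $U$ such as $U_x$ or $U_x\cap[U,U]$ can a priori be \emph{wound}, and wound unipotent groups have nonvanishing (even infinite) $H^1$. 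You would also need to justify that $U_x$ and $U_x\cap[U,U]$ are smooth and connected before any such cohomological argument applies. This is precisely the ``pathology of unipotent subgroups in positive characteristic'' that the paper isolates: its substitute for your cohomological step is Lemma \ref{subtle}, which uses the involution $\theta$ and the hypothesis of characteristic not $2$ to prove that the image of $U_x=V^\theta$ (with $V=U\cap\theta(U)$) in $U/[U,U]\cong\mathbb{G}_a^d$ is an honest $k$-linear subspace, via a case analysis on simple roots (imaginary, complex with simple image, complex with non-simple image) and the vanishing $H^1(\langle\theta\rangle,U_{\alpha+\theta(\alpha)})=0$. Once that linearity is known, the passage from the triviality of $\ell$ on $U_x(k)$ to the triviality of $\bar\ell$ on $U_x(\bar k)$ (equivalently, your desired rationality of the containment of a simple root space) is immediate; without it, your descent step has no justification in the equal-characteristic local case.

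Two smaller points. First, the Galois-orbit product $\prod_\sigma U_{\sigma\alpha}$ need not itself be the restricted simple root group (for quasi-split groups of type ${}^2A_{2n}$ the rational root group also involves $U_{\sigma\alpha+\tau\alpha}$); this is harmless since those extra terms lie in $[U,U]$, but it should be said. Second, note that the group-theoretic involution is genuinely used in Lemma \ref{subtle} (see Remark \ref{general}); any repair of your argument that avoids $\theta$ entirely would have to confront the characteristic-$p$ structure of $U_x$ head on, which is exactly what the paper says it cannot do (and why the characteristic $2$ analogue is left open).
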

\begin{proof}Suppose there is an irreducible admissible 
representation of $G(k)$ distinguished by $K(k)$ which is generic.   By Proposition \ref{secondlast}, there exists an orbit of
$U(k)$ passing through a point $x \in K({k})\backslash G({k})$, and 
an abstract Whittaker character  $\ell:U(k)\rightarrow k$ 
such that $\ell$ restricted to $U(k)\cap K(k)$ is trivial, but
$\ell|_{U_\alpha} \not = 0$ for root spaces $U_\alpha$ corresponding to all simple roots $\alpha$ in $U$. Being algebraic (in fact a linear form on a finite dimensional vector space over $k$), such an $\ell$ 
defines an abstract Whittaker character  $\bar{\ell}:U(\bar{k})\rightarrow \bar{k}$ 
such that $\bar{\ell}$ restricted to $U(\bar{k})\cap K(\bar{k})$ 
is trivial (this we prove in Lemma \ref{subtle} below), but
$\bar{\ell}|_{U_\beta} \not = 0$ for each simple root space $U_\beta$ in $U(\bar{k})$ against the
hypothesis in the proposition. 
\end{proof}

In Theorem \ref{mainthm} of the next section we will find  that the geometric condition
on an {\it algebraic Whittaker character} on $U$ being nontrivial on each $U_x$
can be nicely interpreted which will then prove Theorem \ref{mainthm1}.

\begin{remark}Observe that in  Proposition \ref{final} we can deal with all quasi-split groups which become isomorphic over $\bar{k}$ at the same time. For instance, it gives (after we have proved the required statements on the
orbits of $U(\bar{k})$ on $K(\bar{k})\backslash G(\bar{k})$) 
the analogue of Matringe's theorem on non-existence of
  generic representations of $\GL_{m+n}(k)$ distinguished by $\GL_m(k) \times \GL_n(k)$ if $|m-n|>1$ to unitary groups:
  there are no generic representations of 
$\U(V+W)$ (assumed to be quasi-split)
  distinguished by $\U(V) \times \U(W)$ if $|\dim V -\dim W|>1$.
  \end{remark}

\begin{remark}\label{general}
This section is written for a symmetric space $(G,\theta)$ over $k$ together with a given 
unipotent subgroup $U(k)$ of $G(k)$. The involution $\theta$ plays no role in this section, 
and the section remains valid for an arbitrary homogeneous space $K\backslash G$. (The group $U^\theta$ appearing
in this section is then $U\cap K$.) In particular, this section is valid in characteristic 2 except that Lemma \ref{subtle}
uses  the involution $\theta$ crucially and we have not found its analogue in characteristic 2 which will prevent us from
proving the analogue of our main theorem (Theorem \ref{mainthm1}) in characteristic 2 where there is only a subgroup $K$ and not the involution $\theta$. 
\end{remark}

\begin{proposition} \label{final2} Let $k$ be either a finite field, or a non-archimedean local  
  field. Let $(G,K)$ be a symmetric space with $G$ quasi-split over $k$. 
If for any point $x \in K(\bar{k}) \backslash G(\bar{k})$, 
  for $U_x(\bar{k}) = x^{-1}K(\bar{k})x\cap U(\bar{k})$,
$U_x(\bar{k}) \cdot [U,U]$,
 contains a simple root space of $U$, then there is no generic 
  irreducible admissible 
representation of $G(k)$ distinguished by 
$K^1(k)$ for $K^1=[K^0,K^0]$ where $K^0$ is the connected component of identity of $K$.   
  \end{proposition}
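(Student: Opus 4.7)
The plan is to follow the same strategy as Proposition \ref{final}, modifying only the final step so that the orbit analysis on $K^1(k)\backslash G(k)$ can still be played off against the hypothesis, which is phrased in terms of $U_x$ rather than $U^1_x$. Assume for contradiction that $\pi$ is an irreducible admissible generic representation of $G(k)$ with $\Hom_{K^1(k)}[\pi,\C]\neq 0$. Applying Proposition \ref{whittaker} and Proposition \ref{orbits} with $K^1$ in place of $K$ (the arguments there only use that $K^1(k)$ is a closed subgroup, not that it equals the fixed points of $\theta$), we obtain a point $x \in K^1(k)\backslash G(k)$ such that the Whittaker character $\psi$ is trivial on $U^1_x(k)\cdot [U,U](k)$, where $U^1_x(k) = x^{-1}K^1(k)x \cap U(k)$. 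Invoking Lemma \ref{subtle} exactly as in the proof of Proposition \ref{final} (the statement is about linear forms on the vector group $U/[U,U]$ and its $k$-rational subvarieties, so the same argument applies to $U^1_x$ in place of $U\cap K$) we extend $\ell$ to an algebraic Whittaker character $\bar\ell:U(\bar{k}) \to \bar{k}$ which vanishes on $U^1_x(\bar{k})\cdot [U,U](\bar{k})$ and is nonzero on every simple root space.

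The key new ingredient is the following purely algebraic assertion over $\bar{k}$, which I will call the \emph{passage lemma}:
$$U_x(\bar k)\cdot[U,U](\bar k) \text{ contains a simple root space } \Longleftrightarrow U^1_x(\bar k)\cdot[U,U](\bar k) \text{ does.}$$
The $(\Leftarrow)$ direction is trivial since $U^1_x \subseteq U_x$. For $(\Rightarrow)$, I will first show that $U_x/U^1_x$ is a finite group. Since $x^{-1}K^0 x$ is connected reductive, any unipotent element in it lies in its derived subgroup $x^{-1}K^1 x$; so $U_x \cap x^{-1}K^0x = U_x^1$, and therefore $U_x/U^1_x$ embeds into $x^{-1}Kx/x^{-1}K^0x \cong K/K^0$, which is finite. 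Next, pass to images $V_x,V^1_x\subseteq U/[U,U]\cong\Ga^n$; then $V_x/V^1_x$ is finite. Suppose $V_x$ contains a simple root space $U_\alpha\cong\Ga$. Then $V^1_x\cap U_\alpha$ is a closed subgroup of $\Ga$; if it were proper, its complement in $U_\alpha$ would inject an infinite group into the finite quotient $V_x/V^1_x$, a contradiction. Hence $U_\alpha\subseteq V^1_x$, proving the passage lemma.

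Applying the passage lemma to the hypothesis of the proposition (which says $U_x(\bar k)\cdot [U,U](\bar k)$ contains a simple root space for every $x \in K(\bar k)\backslash G(\bar k)$, and in particular for our $x$) we conclude that $U^1_x(\bar k)\cdot [U,U](\bar k)$ contains some simple root space $U_\alpha$. But then $\bar\ell$ would vanish on $U_\alpha$, contradicting the fact that $\bar\ell$ is a Whittaker character. This contradiction completes the proof.

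The one delicate point I anticipate is the passage lemma, and within it the finiteness of $U_x/U^1_x$. The argument above uses that unipotent elements of a connected reductive group lie in its derived subgroup, which is straightforward, but one must be careful that the closed-subgroup arguments go through in positive characteristic (where closed subgroups of $\Ga$ can be nontrivial finite $p$-groups). In this regard the crucial observation is that \emph{any} proper closed subgroup of $\Ga$ is finite, so the quotient $U_\alpha/(V^1_x\cap U_\alpha)$ is infinite whenever the intersection is proper; this is what rules out cases specific to characteristic $p$ and makes the argument characteristic-free (provided $\mathrm{char}(k)\neq 2$, which we have assumed throughout).
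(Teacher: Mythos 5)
Your overall route is the paper's: you run the Whittaker/orbit analysis of the section for the homogeneous space $K^1(k)\backslash G(k)$, and you transfer the hypothesis (stated for $K$) to $K^1$ by showing the stabilizers differ by a finite amount. Your ``passage lemma'' is, in substance, the paper's own observation that $x^{-1}Kx\cap U$ and $x^{-1}K^1x\cap U$ have the same identity component, so that containment of a (connected) simple root space in $U_x\cdot[U,U]$ is insensitive to replacing $K$ by $K^1$; that part of your argument is correct.

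The genuine gap is the sentence in which you invoke Lemma \ref{subtle} ``with $U^1_x$ in place of $U\cap K$''. The justification you give --- that the lemma ``is about linear forms on the vector group $U/[U,U]$ and its $k$-rational subvarieties'' --- mischaracterizes it. The entire content of Lemma \ref{subtle} is the structural fact, proved using the involution (the case analysis on imaginary/complex roots and the vanishing of $H^1(\langle\theta\rangle,\cdot)$ on the $\Ga$-pieces), that the image of $U^\theta$ in $U/[U,U]$ is a \emph{linear subspace} defined over $k$; only because of this does vanishing of $\ell$ on $k$-points force vanishing of $\bar\ell$ on $\bar k$-points. For a general unipotent $k$-subgroup this implication fails (over a finite field the $k$-points of a positive-dimensional subgroup form a finite set, and in positive characteristic subgroups of $\Ga^n$ need not be linear), which is exactly why the paper stresses, in Remark \ref{general} and in Section \ref{ch2}, that Lemma \ref{subtle} uses $\theta$ crucially and is the obstruction in characteristic $2$. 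Since $U^1_x=x^{-1}K^1x\cap U$ is not the fixed-point group of an involution, ``the same argument applies'' is not automatic. The step can be repaired: every element of $U_x(k)=x^{-1}K(k)x\cap U(k)$ is unipotent; by your own observation unipotent elements of the connected reductive group $x^{-1}K^0x$ lie in $x^{-1}K^1x$, and unipotent elements of $x^{-1}Kx$ lie in $x^{-1}K^0x$ (in characteristic zero because the one-parameter unipotent subgroup through such an element is fixed by the conjugated involution and is connected; in odd characteristic because such elements have $p$-power order while the component group $K/K^0$ of the fixed points of an involution is a $2$-group). Hence $U^1_x(k)=U_x(k)=U^{\theta_x}(k)$ for the involution $\theta_x\colon g\mapsto x^{-1}\theta(xgx^{-1})x$, so $\ell$ is trivial on $U^{\theta_x}(k)$ and Lemma \ref{subtle} applies verbatim to $\theta_x$, giving that $\bar\ell$ vanishes on $U_x(\bar k)\supseteq U^1_x(\bar k)$; with this supplement (which is the kind of bookkeeping the paper's own terse proof leaves implicit) your argument goes through.
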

\begin{proof} As pointed out in Remark \ref{general}, the considerations in this section also hold good for $K^1\backslash G$.
 The condition ``$U_x(\bar{k}) \cdot [U,U]$ 
 contains a simple root space of $U$,'' is the same for $K^1$ as for $K$. For this we note that the algebraic groups
$ x^{-1}K(\bar{k})x\cap U(\bar{k})$ and $  x^{-1}K^1(\bar{k})x\cap U(\bar{k})$ have the same connected component of identity,
and therefore the condition that ``$U_x(\bar{k}) \cdot [U,U]$ contains a simple root space'' (which is a connected group) is the same for $K$ as for $K^1$.
\end{proof}
 
\begin{lemma} \label{subtle}
With the notation as before, if a linear form $\ell:U(k)\rightarrow k$ 
 is trivial when restricted to $U^\theta(k)=U(k)\cap K(k)$, then $\bar{\ell}:U(\bar{k})\rightarrow \bar{k}$ is also
trivial when restricted to  $U^\theta(\bar{k})= U(\bar{k})\cap K(\bar{k})$.
\end{lemma}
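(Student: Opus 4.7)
The plan is to factor $\ell$ through the abelianization $V := U/[U,U]$, which is a $k$-vector group (a direct sum of simple root subspaces), and to reduce the lemma to a linear-algebra statement about the image of $U^\theta(k) := U(k) \cap K(k)$ under the quotient $\pi : U \to V$. Writing $\bar\ell : V \to \mathbb{G}_a$ for the induced $k$-linear form on the vector group, the hypothesis is that $\bar\ell$ vanishes on $\pi(U^\theta(k)) \subseteq V(k)$, and the lemma will follow once we show that the $\bar{k}$-linear span of $\pi(U^\theta(k))$ inside $V(\bar{k})$ contains the image $\pi(U^\theta(\bar{k}))$.

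The key structural input is that $U^\theta$, viewed as a closed $k$-subgroup scheme of $U$, is smooth and connected. Smoothness of the fixed-point scheme $G^\theta$ in characteristic not $2$ is standard, and the intersection $U \cap G^\theta$ inherits smoothness and connectedness via a $\theta$-compatible filtration of $U$ by normal $k$-subgroups with vector-group subquotients (such a filtration exists because $G$ is quasi-split, so $U$ is split unipotent), using in each step that in characteristic not $2$ the $+1$-eigenspace of $\theta$ on a vector group is itself a sub-vector-group. As a corollary the image $W := \pi(U^\theta) \subseteq V$ is a smooth connected $k$-subgroup of the $k$-vector group $V$, and hence is itself a $k$-vector subgroup.

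With this structure in place, the proof concludes by cases on $k$. If $k$ is a non-archimedean local (hence infinite) field, then $U^\theta(k)$ is Zariski dense in $U^\theta(\bar{k})$ because a smooth connected unipotent $k$-group is $k$-rational, so $\pi(U^\theta(k))$ is Zariski dense in $W$; the $k$-linear form $\bar\ell$ then vanishes on $W(\bar{k}) \supseteq \pi(U^\theta(\bar{k}))$. If $k$ is finite, Lang's theorem applied to the smooth connected kernel of $U^\theta \twoheadrightarrow W$ gives $\pi(U^\theta(k)) = W(k)$; since $\bar\ell$ is $k$-linear and vanishes on $W(k)$, it vanishes on $W(k) \otimes_k \bar{k} = W(\bar{k})$. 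Either way $\ell$ vanishes on $U^\theta(\bar{k})$, as required.

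The main obstacle is the structural claim of the second paragraph. Because $\theta$ need not preserve $U$, the subgroup $U \cap G^\theta$ is not literally the fixed-point set of an action of $\theta$ on $U$; the filtration argument must therefore be performed inside the ambient $G$ (or equivalently on the Lie algebra $\mathfrak{g}$), so that the eigenspace decomposition available in characteristic not $2$ actually assembles into a smooth connected $k$-subgroup of $U$ rather than a merely set-theoretic intersection. This is precisely the point where the characteristic $\neq 2$ hypothesis in the lemma's statement is essential.
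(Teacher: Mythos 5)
Your reduction to the abelianization and your local-field (density) branch are reasonable, modulo the structural claim, but there is a genuine gap at the crux --- exactly the point the paper's proof is devoted to. The inference ``$W=\pi(U^\theta)$ is a smooth connected subgroup of the vector group $U/[U,U]$, hence a $k$-vector subgroup'' is false in positive characteristic: the graph of Frobenius $\{(x,x^p)\}\subset\mathbb{G}_a^2$ is a smooth connected closed $k$-subgroup which is not a linear subspace. Your finite-field branch relies on precisely this false step, since it needs $W(\bar{k})=W(k)\otimes_k\bar{k}$, and it cannot be rescued by density: over $k=\mathbb{F}_p$ the linear form $x-y$ vanishes on the $\mathbb{F}_p$-points of the Frobenius graph but not on its $\bar{\mathbb{F}}_p$-points. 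Since the lemma is needed for finite fields (and local fields of positive characteristic), this is not a corner case; the whole content of the lemma is to rule out such ``twisted'' subgroups, which is what the paper does by an explicit analysis rather than a general principle. The paper writes $U^\theta=(U\cap\theta(U))^\theta$, where $U\cap\theta(U)$ is $\theta$- and $T$-stable and a product of root groups, and shows the image in $U/[U,U]$ is cut out by honestly linear conditions in each simple-root coordinate: the $+1$-eigenspace when $\theta(\alpha)=\alpha$, a $\theta$-twisted diagonal in $U_\alpha\times U_{\theta(\alpha)}$ when $\alpha\neq\theta(\alpha)$ are both simple, and the full root space $U_\alpha$ when $\theta(\alpha)>0$ is not simple (via the Chevalley commutator relation and $H^1(\langle\theta\rangle,U_{\alpha+\theta(\alpha)})=0$, using characteristic $\neq 2$). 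This uniform-in-the-field linear description is what makes the passage from $k$ to $\bar{k}$ legitimate even over finite fields; your proposal replaces it with an implication that does not hold.

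A secondary, more repairable issue: your structural claim that $U\cap K$ is smooth and connected is only sketched, and as you yourself note the sketched filtration of $U$ does not directly make sense because $\theta$ need not stabilize $U$; the standard fix is the paper's, namely passing to $U\cap\theta(U)$, which is $\theta$-stable, and running the eigenspace/$H^1$ argument there. Likewise, your use of Lang's theorem requires the kernel of $U^\theta\to W$ to be smooth and connected, which is not addressed. But even granting all of this, the finite-field case still needs the linearity of $W$, i.e., the paper's case-by-case analysis or a genuine substitute for it.
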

\begin{proof}
Our linear form
$\ell: U(k)\rightarrow k$ arises from a linear form of vector spaces $\ell: (U/[U,U])(k)\rightarrow k$ trivial on 
the image of $U^\theta$ inside $(U/[U,U])(k)$. We will show below
that the image of $U^\theta$ inside $(U/[U,U])(k)$ is a linear subspace of the $k$-vector space 
$(U/[U,U])(k)$ which will prove the lemma.
  The subtlety in the lemma arises from the fact that a priori we only know that $U\cap K = U^\theta$ 
is a unipotent group, and a subgroup of $U$, and in positive characteristic, subgroups of unipotent groups do not have any simple minded structure (even for ${\mathbb G}_a^n \cong U/[U,U]$ which is where our analysis is done). But our unipotent subgroups ($U\cap K$  of $U$) are not the pathological ones as we analyze now.

Let $B$ be the Borel subgroup of $G$ containing $U$, and containing 
a $\theta$-invariant maximal torus $T$ (this is possible by Lemma \ref{tori}).
By generalities around Bruhat decomposition, we know that the 
intersection of any two maximal unipotent subgroups of $G$, in particular $V = U \cap \theta(U)$ 
is a connected unipotent subgroup of $G$ generated by their common root spaces, i.e., 
$$V = U \cap \theta(U) = \prod_{\alpha>0, \theta(\alpha)>0}U_\alpha,$$ 
where the product is taken in any order (it is useful to note that the ``co-ordinates'' corresponding to simple roots are independent of the ordering). 

We will prove that the image of $U^\theta= V^\theta$ under the natural group homomorphism from $V^\theta$ to 
$U/[U,U] \cong \prod_{\alpha \, {\rm simple}}U_\alpha 
\cong {\mathbb G}_a^d$ is a linear subspace, which will prove the lemma. Clearly, 
the image of $V^\theta$ in $\prod_{\alpha \, {\rm simple}}U_\alpha$ is contained in 
$\prod_{\alpha  \in S}U_\alpha$ where $S$ is the set of simple roots $\alpha$ with the property that
$\theta(\alpha)>0$. For simple roots $\alpha$ with $\theta(\alpha)>0$, there are three options:
\begin{enumerate}

\item $\alpha = \theta(\alpha)$ in which case $\theta$ preserves the root space $U_\alpha 
 ={\mathbb G}_a^{d_\alpha}$ 
(for some positive integer $d_\alpha$), and
being an involution, one can decompose  $U_\alpha =  {\mathbb G}_a^{d^+_\alpha} 
+ {\mathbb G}_a^{d^-_\alpha}$ 
with $d_\alpha^+ + d_\alpha^-=d_\alpha$ such that $\theta$ acts as identity on ${\mathbb G}_a^{d^+_\alpha} $
and as $-1$ on ${\mathbb G}_a^{d^-_\alpha}$ (using that we are over a field  of characteristic not 2). 

\item $\alpha \not = \theta(\alpha)$ but both simple. In this case, the image of $V^\theta$ lands inside a linear subspace 
(the diagonal subgroup consisting of element $(u_\alpha,\theta(u_\alpha))$) of 
$U_\alpha \times U_{\theta(\alpha)}$. 

\item $\theta(\alpha)$ is positive but not simple. In this case,  we prove 
that the image of $V^\theta$  inside  $U_\alpha $ is all of $U_\alpha$. 

For this observe that for $u_\alpha \in U_\alpha$, $v=u_\alpha \cdot \theta(u_\alpha)$ belongs to $V$. If $\alpha+\theta(\alpha)$ is not a root, then the
root spaces $U_\alpha$ and $U_{\theta(\alpha)}$ commute, and $\theta(v)=v$, hence it belongs to $V^\theta$ with image 
$u_\alpha \in U_\alpha$,  so the image of $V^\theta$  inside 
$U_\alpha $ is $U_\alpha$. 

Next, assume that $\alpha+\theta(\alpha)$ is a root.
In this case since $\alpha$ and $\theta(\alpha)$ have the same norm, by properties of root systems, the only possible
roots among $i \alpha+ j \theta(\alpha)$ for $i>0,j>0$ is $\alpha+\theta(\alpha)$, hence by Chevalley commutation relation, 
$v\theta(v)^{-1} = [u_\alpha,\theta(u_\alpha)] \in U_{\alpha+\theta(\alpha)}$ 
which is a $\theta$-stable linear
space over a field of characteristic not 2.  
Since
multiplication by 2 is an isomorphism on $U_{\alpha+\theta(\alpha)}$, $H^1(\langle \theta \rangle, U_{\alpha+\theta(\alpha)})=0 $. 
It follows that $v \theta(v)^{-1} = z^{-1}\theta(z)$
for some $z \in U_{\alpha+\theta(\alpha)}$, hence $z u_\alpha \theta(u_\alpha)  \in V^\theta$, proving once again that 
that the image of $V^\theta$  inside  $U_\alpha $ is $U_\alpha$. 
\end{enumerate}

Thus, our linear form
$\ell: U(k)\rightarrow k$ arises from a linear form of vector spaces $\ell: (U/[U,U])(k)\rightarrow k$ trivial on 
the image of $U^\theta$ inside $(U/[U,U])(k)$ which is a linear subspace of the $k$-vector space 
$(U/[U,U])(k)$. The conclusion of the lemma now follows.
\end{proof}
\section{The main theorem}
In this section we work with an arbitrary algebraically closed field $E$ of characteristic not 2. The following proposition is a special case of the main theorem (Theorem \ref{mainthm}) of this section which is proved using this special case. 

\begin{proposition} \label{basic2} Let $G$ be a connected reductive algebraic group over $E$, $T$ a maximal torus in $G$ contained in a Borel subgroup $B$ of $G$. Let $\theta$ be the involution on $G$ which is 
conjugation by an element $t_0 \in T$ which acts by $-1$ on all simple root spaces of $T$ in $B$. Then the symmetric space $(G,\theta)$ is quasi-split, i.e., there exists a 
Borel subgroup $B'$ of $G$ for which  $\theta(B')$ is opposite of $B'$.
\end{proposition}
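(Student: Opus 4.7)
The plan is to exhibit a specific one-dimensional $\theta$-split torus $A\subset G$ whose centralizer $Z_G(A)$ is already a maximal torus of $G$; once this is done, condition (1) in the definition of a quasi-split symmetric space, applied to any maximal $\theta$-split torus containing $A$, immediately gives the proposition. By Proposition~\ref{inv}, symmetric spaces over any two algebraically closed fields of characteristic not $2$ correspond canonically and quasi-splitness is preserved under this identification, so I may assume from the outset that $E$ has characteristic $0$.

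In characteristic $0$, let $(e,h,f)$ be a principal $\mathfrak{sl}_2$-triple in $\mathfrak{g}$ with $e = \sum_{\alpha \text{ simple}} e_\alpha$, $h = 2\rho^\vee \in \mathfrak{t}$, and $f \in \mathfrak{u}^-$ its Jacobson--Morozov companion. The hypothesis $\alpha(t_0) = -1$ for every simple root $\alpha$ gives $d\theta(e_\alpha) = -e_\alpha$, and since $(-\alpha)(t_0) = \alpha(t_0)^{-1} = -1$, also $d\theta(f_\alpha) = -f_\alpha$; hence $\theta(e) = -e$ and $\theta(f) = -f$, so the element $X := e - f$ lies in the $(-1)$-eigenspace of $d\theta$ on $\mathfrak{g}$. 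Inside the principal $\mathfrak{sl}_2$, $X$ is conjugate to $h$ and is therefore a regular semisimple element of $\mathfrak{g}$; let $A\subset G$ be the one-dimensional torus obtained as the Zariski closure of its one-parameter subgroup. Since $d\theta$ acts as $-1$ on $\mathrm{Lie}(A) = E\cdot X$, $\theta$ acts on $A$ by inversion and $A$ is a $\theta$-split torus.

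The central computation is that $Z_G(A) = Z_G(X)$ is already a maximal torus of $G$. By Kostant's theorem, $\mathfrak{g}$ decomposes as a module over the principal $\mathfrak{sl}_2$ into exactly $r := \dim T$ irreducibles of odd dimensions $2e_i + 1$, indexed by the exponents $e_i$ of $G$; on each such irreducible a nonzero semisimple element of $\mathfrak{sl}_2$ has a one-dimensional zero-weight space, so $\dim \mathfrak{g}^X = r$ and consequently $\dim Z_G(X) = r$. Now $Z_G(A)$ is the centralizer of a torus in a connected reductive group, hence itself connected and reductive; it contains any maximal torus of $G$ through $A$, so its rank equals $r$, and a connected reductive group whose dimension equals its rank must be a torus. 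Hence $Z_G(A) =: T'$ is a maximal torus of $G$. Finally, pick any maximal $\theta$-split torus $A_{\max} \supset A$; then $A_{\max} \subset T'$ and, since $T'$ is abelian, $T' \subset Z_G(A_{\max}) \subset Z_G(A) = T'$, giving $Z_G(A_{\max}) = T'$, a maximal torus of $G$. This is exactly condition (1) in the definition of a quasi-split symmetric space, so $(G,\theta)$ is quasi-split. The real combinatorial content of the proof is Kostant's multiplicity-one statement for the zero weight of the principal $\mathfrak{sl}_2$ on $\mathfrak{g}$: it compresses the existence of a $\theta$-split Borel down to the existence of a single $\theta$-anti-invariant regular semisimple element of $\mathfrak{g}$.
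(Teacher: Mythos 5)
Your characteristic-zero argument is sound in substance and is essentially a Lie-algebra version of the paper's Case 1: your $X=e-f$ is the infinitesimal avatar of the element $j_2=\left(\begin{smallmatrix}0&1\\-1&0\end{smallmatrix}\right)$ used there, and exhibiting a one-dimensional $\theta$-split torus whose centralizer is a maximal torus is a clean route that even avoids the reduction to adjoint groups. (Two small repairs: $e-f$ is conjugate in $\mathfrak{sl}_2$ to $i\cdot h$, not to $h$, which is all you need for the zero-weight count; and ``the Zariski closure of its one-parameter subgroup'' should be replaced by an actual construction of $A$, e.g.\ the image under the principal $\SL_2$-homomorphism of the maximal torus of $\SL_2$ centralizing $e_0-f_0$, or a conjugate of the image of the cocharacter $2\rho^\vee$.)

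The genuine gap is your very first step. You reduce to characteristic zero by invoking part (2) of Proposition \ref{inv}, but within this paper that statement is not available here: its proof in Section \ref{inv2} deduces the preservation of quasi-splitness from Proposition \ref{A-V}, and the proof of Proposition \ref{A-V} appeals to Proposition \ref{basic2} itself (both directly and via Theorem \ref{mainthm}, which also rests on Proposition \ref{basic2}). So the reduction is circular: for the involutions at hand (inner, acting by $-1$ on every simple root space), transferring quasi-splitness from characteristic $0$ to characteristic $p$ is essentially the content of Proposition \ref{basic2} in characteristic $p$. Nor is the transfer an off-the-shelf fact one can wave at independently: quasi-splitness is the nonemptiness of the open locus of $\theta$-split Borels in the flag variety, and nonemptiness of an open locus does not automatically pass from the generic to the special fibre of a model over a discrete valuation ring. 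As written, your argument proves the proposition in characteristic $0$ (and, replacing the Lie-algebra computation by the group-level principal $\SL_2$ as in the paper's Case 1, when $p>h$); the remaining small odd characteristics are precisely what the paper's Cases 2 and 3 (explicit matrix computations for classical groups, and the classification of involutions of exceptional groups together with dimension counts) are there to handle, and your proposal offers no substitute for them. To salvage it you would need either an independent, non-circular proof of the quasi-splitness transfer in Proposition \ref{inv}(2), or a direct argument in small positive characteristic along the paper's lines.
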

\begin{proof}
  For the proof of the proposition it suffices to assume that $G$ is an adjoint simple group. In fact if two symmetric spaces 
$(G,\theta)$ and $(G',\theta')$ are related by a homomorphism $\phi: G\rightarrow G'$ with $\theta' \circ \phi = \phi \circ \theta$ such that ${\rm ker}(\phi)$ is central in $G$,
and the image of $G$ under $\phi$ is normal in $G'$, with $G'/\phi(G)$ a torus, then the proposition is true for 
$(G,\phi)$ if and only if it is true for $(G',\phi')$.

The proof of the proposition (for $G$  an adjoint group)  
will be divided into 3 cases. In this case, the element $t_0 \in T$ is unique, and the proposition amounts to saying that the element $t_0\in T$ has a conjugate in 
$G$ which takes $B$ to an opposite Borel which is what we will prove below.

\vspace{2mm}
\noindent{\bf Case 1} (Assuming the Jacobson-Morozov theorem):
 \vspace{2mm}

Let $T_0$ be the diagonal torus in $\SL_2(E)$, $B_0^{\pm}$ the group of upper-triangular and 
lower-triangular matrices in $\SL_2(E)$. 
Assume first that there is a  $j: SL_2(E) \rightarrow G$,  
the Jacobson-Morozov homomorphism corresponding to a regular
unipotent element in $B$ with $j(T_0) \subset T$. 
The Jacobson-Morozov homomorphism is known to exist if either $p=0$, or $p>h$, where $h$ is the Coxeter number of $G$ 
(see \cite{Se2}, Prop. 2, and other references in the bibliography of this paper). 
The element $ j_1=\left  ( \begin{array}{cc}
i &  0  \\
0 & -i 
          \end{array} \right) \in \SL_2(E)$ 
with $i=\sqrt{-1}$, acts by $-1$ on the simple root space of $\SL_2(E)$ contained in $B_0^+$, therefore its image under 
$j$ also acts by $-1$ on all simple root spaces of $T$ in $B$. 

Observe that the matrix
$j_2= \left  ( \begin{array}{cc}
0 &  1  \\
-1 & 0 
          \end{array} \right)$  in $\SL_2(E)$ normalizes $T_0$, and acts on it by $t\rightarrow t^{-1}$.  
It follows that the conjugate of $B$ by $j(j_2)$ is opposite to $B$.

The matrices $j_1 = \left  ( \begin{array}{cc}
i &  0  \\
0 & -i 
          \end{array} \right) $ and 
$j_2= \left  ( \begin{array}{cc}
0 &  1  \\
-1 & 0 
          \end{array} \right)$  in $\SL_2(E)$ are conjugate in $\SL_2(E)$, hence their images under $j$ are conjugate in $G(E)$. Therefore, the proof of the proposition is completed 
whenever we have the Jacobson-Morozov homomorphism corresponding to a regular
unipotent element in $B$.

\vspace{2mm}
\noindent{\bf Case 2} (Classical groups):
For all classical groups $G$ with the standard description of the bilinear form such as $X_1X_{2n}+ \cdots + X_nX_{n+1}$ (for $\Sp_{2n}(E)$ and $\SO_{2n}(E)$) with the standard description of maximal torus and a Borel subgroup containing it as the diagonal subgroup and the upper triangular subgroup,   $\theta$ could be taken to be 
 the involution on $\G(E)$ which is conjugation by the diagonal matrix $t_0$ inside $\GL_n(E)$ where,
$$t_0= \left  ( \begin{array}{cccccccc}
1& {} & {} & {}  &  {}   & {}  &{} &  \\
{}& -1  & {} & {}   &  {}  &  {}  & {} &   \\
{}& {}  & 1  & {}  &   {} & {}   & {}   &\\
{}& {}  &   & -1  &   {} & {}   &{} &  \\
{}& {}  &   & {}  &   \cdot & {}   &{}   &\\
{} &  {}  &   {}   &  &  & \cdot &    & \\
{}& {}  &   & {}  &   {} &  & 1      &    \\
{} & {}   &  {}  & {}  &  & & & (-1)^{n+1}  
\end{array} \right)\cdot
$$

The involution $\theta$ 
preserves the group of upper triangular matrices, but is conjugate to the involution $\theta'$ given by 
conjugation by the anti-diagonal matrix:
$$\left  ( \begin{array}{cccccccc}
{}& {} & {} & {}  &  {}   & {}  &{} & 1  \\
{}&   & {} & {}   &  {}  &  {}  & 1 &   \\
{}& {}  &   & {}  &   {} & 1   & {}   &\\
{}& {}  &   &   &   {} & {}   &{} &  \\
{}& {}  &   & {}     \cdot&  & {}   &{}   &\\
{} &   & {\cdot}     &  &  &  &    & \\
{}& 1  &   & {}  &   {} &  &       &    \\
1 & {}   &  {}  & {}  &  & & &   
\end{array} \right),
$$
for which the group of upper triangular matrices is $\theta'$-split.  We leave the  details to the reader.

\vspace{2mm}
\noindent{\bf Case 3} (Exceptional groups):
\vspace{2mm}

The first observation to make is that if $w_0$ is a longest element in the Weyl group of an adjoint group, then 
by Lemma 5.4 of \cite{AV2}, $w_0$ has a lift to $G$, say $\tilde{w}_0$, with $\tilde{w}^2_0 =1$.

Therefore to prove the proposition for adjoint simple Exceptional group, it suffices to prove that the involutions $t_0$ and 
$\tilde{w}_0$ in $G$ are conjugate. However, for exceptional groups there are very few conjugacy classes of elements of order 2 in $G$ (cf. \cite{Hel}, chapter X, table V for $E=\C$, and therefore also for all 
algebraically closed fields $E$ of characteristic not 2 by what we discuss in section \ref{inv2}):

\begin{enumerate}
\item $G_2$ has only 1;
\item $F_4$ has only 2;
\item $E_6$ has only 2 (with dimension of $G^\theta$ being 38, 46 (EII and EIII in Chapter X, table V of \cite{Hel}), so the quasi-split symmetric space has  $\dim (G^\theta)=38$); note that
the table in Helgason's book has 4 entries for $E_6$, however those involutions which come from inner conjugation action have the property that their fixed point subgroup
has the same rank as the ambient group, eliminating 2 of the 4 entries.
\item $E_7$ has only 3;
\item $E_8$ has only 2.
\end{enumerate}

Since the dimension of the fixed points subgroups for both the involutions $t_0$ and $w_0$ can be easily estimated (see Remark \ref{dim} for $w_0$ which defines a quasi-split symmetric space), 
the proof of the proposition follows.\end{proof}

\begin{remark} At the time of revising this paper, the author found out that Proposition \ref{basic2} is Theorem 6.1 of Springer's paper \cite{Sp3}. \end{remark}

  \begin{example} Let $\theta$ be the involution on $\GL_{n}(E)$ which is conjugation by the diagonal matrix 
$$\left  ( \begin{array}{cccccccc}
1& {} & {} & {}  &  {}   & {}  &{} &  \\
{}& -1  & {} & {}   &  {}  &  {}  & {} &   \\
{}& {}  & 1  & {}  &   {} & {}   & {}   &\\
{}& {}  &   & -1  &   {} & {}   &{} &  \\
{}& {}  &   & {}  &   \cdot & {}   &{}   &\\
{} &  {}  &   {}   &  &  & \cdot &    & \\
{}& {}  &   & {}  &   {} &  & 1      &    \\
{} & {}   &  {}  & {}  &  & & & (-1)^{n+1}  
\end{array} \right)\cdot
$$
The group $G^\theta$ in this case is isomorphic to $\GL_d(E) \times \GL_d(E)$ if $n=2d$ (resp. $\GL_d(E) \times \GL_{d+1}(E)$ if $n = 2d+1$),  and
$G_\theta(\R)=\U(d,d)(\R)$ (resp. $\U(d,d+1)(\R)$)
which is quasi-split over $\R$. The involution $\theta$ 
preserves the group of upper triangular matrices, but is conjugate to the involution $\theta'$ given by 
conjugation by the anti-diagonal matrix:
$$\left  ( \begin{array}{cccccccc}
{}& {} & {} & {}  &  {}   & {}  &{} & 1  \\
{}&   & {} & {}   &  {}  &  {}  & 1 &   \\
{}& {}  &   & {}  &   {} & 1   & {}   &\\
{}& {}  &   &   &   {} & {}   &{} &  \\
{}& {}  &   & {}     \cdot&  & {}   &{}   &\\
{} &   & {\cdot}     &  &  &  &    & \\
{}& 1  &   & {}  &   {} &  &       &    \\
1 & {}   &  {}  & {}  &  & & &   
\end{array} \right),
$$
for which the group of upper triangular matrices is $\theta'$-split.  
\end{example}

The following crucial lemma will be proved in the next section.

\begin{lemma}\label{main}
Let $T=HA$ be a maximal torus for $G$ left invariant by $\theta$ which operates as identity on $H$ and as 
$t\rightarrow t^{-1}$ on $A$. 
Let $B=TU$ be a Borel subgroup of $G$ containing $T$. 
Assume that no simple root of $B$ with respect to $T$ is contained in $U^\theta\cdot [U,U]$. 
Then $\theta$ acts on any  simple root space of $B\cap Z_G(A)$ with respect to $T$ by $-1$.
\end{lemma}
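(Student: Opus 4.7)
My plan is to break the proof into three steps: (i) classify the simple roots of $B$ under the hypothesis, (ii) derive a support constraint on positive imaginary roots, and (iii) verify by direct computation that $\theta$ acts by $-1$ on each simple root space of $B \cap Z_G(A)$.

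For step (i), I would revisit the analysis of the image of $U^\theta$ inside $U/[U,U]$ carried out in the proof of Lemma \ref{subtle}. The hypothesis rules out two possibilities for a simple root $\alpha$ of $B$ with respect to $T$: $\alpha$ cannot be compact imaginary, since then $U_\alpha \subset U^\theta$ directly; and the configuration in which $\theta(\alpha)$ is positive but not simple is also forbidden, since the Chevalley construction in case~(3) of Lemma \ref{subtle} (the correction $v\theta(v)^{-1} \in U_{\alpha+\theta(\alpha)}$ allowed by $2$ being invertible) places all of $U_\alpha$ in $U^\theta \cdot [U,U]$. Hence every simple root of $B$ falls into exactly one of three classes: $\Pi_{\text{im}}$ (imaginary with $\theta$ acting by $-1$ on $U_\alpha$), $\Pi_{\text{pair}}$ (with $\theta(\alpha)$ a distinct positive simple), or $\Pi_{\text{neg}}$ (with $\theta(\alpha) < 0$).

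For step (ii), if $\gamma = \sum n_\alpha \alpha$ is a positive imaginary root, then $\theta(\gamma) = \gamma$, and comparing coefficients in $\sum n_\alpha \theta(\alpha) = \sum n_\alpha \alpha$ using the classification from (i) forces $n_\alpha = 0$ for $\alpha \in \Pi_{\text{neg}}$ (those contribute only negative summands on the left) and $n_\alpha = n_{\theta(\alpha)}$ for $\alpha \in \Pi_{\text{pair}}$.

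For step (iii), let $\beta$ be a simple root of $B \cap Z_G(A)$, i.e., a simple root of the imaginary subsystem $\Phi_{\text{im}}$. If $\beta$ is itself a simple root of $G$, then $\beta \in \Pi_{\text{im}}$ and $\theta$ acts by $-1$ on $U_\beta$ by (i). Otherwise $\beta$ has height $\geq 2$ in $G$, and its support must involve $\Pi_{\text{pair}}$: if it were contained in $\Pi_{\text{im}}$, the standard height-reduction $\beta = \alpha_{i_0} + (\beta - \alpha_{i_0})$ (for a simple $\alpha_{i_0}$ in the support with $(\alpha_{i_0},\beta) > 0$) would produce a decomposition into two positive imaginary roots, contradicting simplicity of $\beta$ in $\Phi_{\text{im}}$. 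In this case I would pass to the $\theta$-stable Levi $G'' \subset G$ generated by $T$ and the root subgroups for $\Pi_{\text{im}} \cup \Pi_{\text{pair}}$; by (ii) it contains $Z_G(A)$, and $\theta$ preserves $B \cap G''$. Writing $\theta|_{G''} = \mathrm{Ad}(t_0) \circ \theta_{\text{qs}}$, where $\theta_{\text{qs}}$ is the pinning-preserving lift of the diagram involution $\sigma$ (fixing $\Pi_{\text{im}}$ pointwise and swapping $\Pi_{\text{pair}}$) and $t_0 \in T \cap G''$ is the element with $\alpha(t_0) = -1$ for $\alpha \in \Pi_{\text{im}}$ and $\alpha(t_0) = +1$ for $\alpha \in \Pi_{\text{pair}}$, one computes $\theta(e_\beta) = \beta(t_0)\,\epsilon(\beta)\,e_\beta$ where $\epsilon(\beta) \in \{\pm 1\}$ is the sign by which $\theta_{\text{qs}}$ acts on $e_\beta$. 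The needed identity $\beta(t_0)\,\epsilon(\beta) = -1$ is verified by a Chevalley bracket computation: expressing $e_\beta$ as a nested Lie bracket of simple root vectors $e_\alpha$ for $\alpha$ in the support of $\beta$, applying $\theta_{\text{qs}}$ (which reverses the order on $\Pi_{\text{pair}}$ entries and fixes $\Pi_{\text{im}}$ entries), and using antisymmetry of the bracket together with the Jacobi identity to compare with the original expression.

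The main obstacle is the final Chevalley-sign verification in the second sub-case of (iii). I expect this to require either a case-by-case computation per Dynkin type of $G''$ (paralleling the classical/exceptional split in Proposition \ref{basic2}), or a uniform induction on the height of $\beta$ exploiting the palindromic structure of the nested bracket forced by $\sigma$-invariance of $\beta$, together with the commutator relations among pairs $\{e_\alpha, e_{\sigma(\alpha)}\}$.
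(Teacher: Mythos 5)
Your steps (i) and (ii) track the paper's own reductions: the same three-way classification of simple roots of $B$ (compact imaginary excluded outright; the ``$\theta(\alpha)>0$ but not simple'' configuration excluded by the $H^1(\langle\theta\rangle,\cdot)=0$/Chevalley correction argument), and the same conclusion that an imaginary root can only be supported on imaginary simple roots and on pairs $\{\alpha_i,\theta(\alpha_i)\}$ with equal coefficients (the paper phrases this via heights). The first sub-case of (iii) ($\beta$ simple in $G$) is also fine. But the decisive step --- showing $\theta(X_\beta)=-X_\beta$ when $\beta$ is simple in $Z_G(A)$ yet not simple in $G$ --- is exactly where your proposal stops: you reduce it to the identity $\beta(t_0)\,\epsilon(\beta)=-1$ and declare it the ``main obstacle,'' to be settled by an unspecified bracket computation or induction on height. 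That identity is the entire content of the lemma, so as written there is a genuine gap.

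Moreover, the two routes you sketch for closing it will not work as stated, because the sign identity is \emph{false} for general imaginary roots whose support meets $\Pi_{\mathrm{pair}}$: compact imaginary roots do occur in this setting, so no purely formal ``palindromic bracket'' induction can force the sign to be $-1$; one must use the simplicity of $\beta$ in $Z_G(A)$ far more strongly than merely to guarantee that the support meets $\Pi_{\mathrm{pair}}$. The paper's proof does this by working not with the Levi generated by $\Pi_{\mathrm{im}}\cup\Pi_{\mathrm{pair}}$ (your $G''$, which is coarser than needed) but with the support $\Delta(\beta)$ itself, which is connected by Lemma \ref{connected}; the connected Dynkin diagrams carrying a nontrivial involution are then only $A_{2n}$, $A_{2n+1}$, $D_n$ and $E_6$. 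In the two type-$A$ cases the Jacobi-identity computation does yield $\theta(X_\beta)=-X_\beta$ (using that $\lambda+\theta(\lambda)$ is not a root in $A_{2n+1}$ and that the fixed middle simple root is noncompact imaginary), while the $D_n$ and $E_6$ configurations are not handled by a sign computation at all: they are \emph{excluded}, by exhibiting an imaginary simple root $\beta_j$ of $G$ in the support with $(\beta,\beta_j)=1>0$, contradicting that $\beta$ and $\beta_j$ are distinct simple roots of $Z_G(A)$. This exclusion argument is absent from your outline, and without it (or some substitute) the case-by-case verification you anticipate cannot be completed.
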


The following  theorem when combined with Proposition \ref{final2} finally proves 
 Theorem \ref{mainthm1} of the introduction.

\begin{thm}\label{mainthm}  Let $(G,\theta)$ be a symmetric space over $E$. 
Suppose $B$ is a Borel subgroup of $G$ with unipotent radical $U$, and $T \subset B$, a 
$\theta$-invariant maximal torus such that none of the simple roots of $B$ with respect to $T$ are contained in $U^\theta \cdot [U,U]$.
 Then the symmetric space $(G,\theta)$ is quasi-split, i.e.,
there exists a Borel subgroup $B'$ of $G$ such that $\theta(B')$ and $B'$ are opposite, i.e., $B' \cap \theta(B')$ is a 
maximal torus of $B'$.
\end{thm}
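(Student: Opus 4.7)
The plan is to reduce Theorem~\ref{mainthm} to Proposition~\ref{basic2} via Lemma~\ref{generic-devissage}, after a central-quotient step. Writing the $\theta$-invariant maximal torus as $T = HA$ with $H = (T^\theta)^0$ and $A$ the identity component of $\{t \in T : \theta(t) = t^{-1}\}$, the torus $A$ is a $\theta$-split torus of $G$ (not generally maximal as such). Set $M := Z_G(A)$, which is a $\theta$-stable Levi subgroup of $G$ containing $T$, and let $B_M := B \cap M$ be the Borel of $M$ determined by $B$. By Lemma~\ref{generic-devissage}, it is enough to prove that $(M, \theta|_M)$ is $\theta$-quasi-split.

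Next I would pass to the quotient $M' := M/A$ with induced involution $\theta'$. Since $A$ is central in $M$, $T' := T/A \cong H/(H\cap A)$ is a maximal torus of $M'$, and since $\theta$ is the identity on $H$ while acting as $t\mapsto t^{-1}$ on $A$, the induced $\theta'$ acts trivially on $T'$. By Lemma~\ref{trivialonT}, $\theta'$ is conjugation by some element $t_0' \in T'$. The roots of $M$ with respect to $T$ are trivial on $A$ and canonically identify with the roots of $M'$ with respect to $T'$; under this identification a root $\alpha$ evaluates at $t_0'$ to the scalar by which $\theta|_M = \theta'$ acts on the corresponding root space. The hypothesis of the theorem is exactly the hypothesis of Lemma~\ref{main}, so Lemma~\ref{main} gives $\alpha(t_0') = -1$ for every simple root $\alpha$ of $B_M'$ with respect to $T'$. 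Proposition~\ref{basic2} then applies directly to $(M', \theta')$ and produces a Borel $B^\star$ of $M'$ with $\theta'(B^\star)$ opposite to $B^\star$.

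It then remains to lift. Because $A$ is central, every Borel of $M$ contains $A$, so the quotient $M \to M'$ induces a bijection between Borels of $M$ and Borels of $M'$; moreover, $A$ being $\theta$-stable makes this bijection $\theta$-equivariant. Let $B^\star_M$ be the preimage of $B^\star$ in $M$. Then $\theta(B^\star_M)$ is the preimage of $\theta'(B^\star)$, and the intersection $B^\star_M \cap \theta(B^\star_M)$ is the preimage in $M$ of the maximal torus $B^\star \cap \theta'(B^\star)$ of $M'$; its dimension is $\dim A + \dim T' = \dim T$, so it is a maximal torus of $M$. Hence $(M, \theta|_M)$ is $\theta$-quasi-split, and Lemma~\ref{generic-devissage} delivers the theorem.

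The main obstacle is clearly Lemma~\ref{main} itself, whose task is to translate the orbit-theoretic input (no simple root space of $U$ sits inside $U^\theta \cdot [U,U]$) into the clean structural statement that $\theta$ acts as $-1$ on each simple root space of $B\cap Z_G(A)$. Granted Lemma~\ref{main} together with Proposition~\ref{basic2}, the rest of Theorem~\ref{mainthm} is the devissage sketched above: restrict via Lemma~\ref{generic-devissage} to the Levi $Z_G(A)$, kill its central $\theta$-split subtorus $A$ so that $\theta$ becomes inner by Lemma~\ref{trivialonT}, and then invoke Proposition~\ref{basic2} on the quotient.
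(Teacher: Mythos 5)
Your proposal is correct and follows essentially the same route as the paper: restrict to $Z_G(A)$ via Lemma~\ref{generic-devissage}, pass to $Z_G(A)/A$ where $\theta$ becomes inner by Lemma~\ref{trivialonT}, use Lemma~\ref{main} to see it acts by $-1$ on the simple root spaces, and conclude with Proposition~\ref{basic2}. The only differences are cosmetic: you treat the case $\mathrm{rank}(A)=0$ uniformly rather than separately, and you spell out the lifting of quasi-splitness from $Z_G(A)/A$ back to $Z_G(A)$, a step the paper leaves implicit.
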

\begin{proof} 
Assume that $T=HA$ on which $\theta$  operates as identity on $H$ and as 
$t\rightarrow t^{-1}$ on $A$.  

If   rank$(A) =0$, then $T$ is a maximal torus of $G$ on which $\theta$ operates trivially. 
In this case,  we know by Lemma \ref{trivialonT} that such an automorphism of $G$ is an inner-conjugation by an element, say $t_0$, of $T$. We are furthermore  
given that $U^\theta \cdot [U,U]$ has no simple roots of $T$ inside $U$. Since $\theta$ is an involution on $G$ induced by $t_0\in T$, its action on each simple root space of $T$ in $B$ is by $1$ or $-1$. Since $U^\theta \cdot [U,U]$ has no simple roots, we find that $t_0$ operates by $-1$ on all simple roots, and therefore we are in the context
of Proposition \ref{basic2}, which proves the theorem in this case.

If rank$(A) >0$ consider   
the subgroup $Z_G(A)$ of $G$ with the same maximal torus $T=HA $ contained now
in the Borel subgroup $B\cap Z_G(A)$ of $Z_G(A)$. Since $A$ is a central subgroup in $Z_G(A)$, $Z_G(A)/A$ 
has the maximal torus $T/A = H/(H\cap A)$ on which $\theta$ operates trivially. By Lemma \ref{trivialonT}, 
the restriction of $\theta$ to $Z_G(A)/A$ is an inner-conjugation by an element $s_0\in Z_G(A)$. 
Given the hypothesis in this theorem that ``no simple root of $B$ with respect to $T$ is contained in $U^\theta\cdot [U,U]$'', 
by Lemma \ref{main},  $\theta$ acts on any  simple root space of $[B\cap Z_G(A)]/A$ by $-1$. 
Therefore by Proposition \ref{basic2}, $Z_G(A)/A$ --- and therefore $Z_G(A)$ --- is $\theta$-quasi-split.
Now by Lemma \ref{generic-devissage},
$G$ itself is $\theta$-quasi-split, proving the theorem. \end{proof}

We also note the following corollary of this theorem.

\begin{corollary}
Let $(G,\theta)$ be a symmetric space over $E$ with $K=G^\theta$. Then 
there exists a Borel subgroup $B$ of $G$ with unipotent radical $U$, such that
 $U^\theta = U \cap K = \{e\}$  if and only 
if the symmetric space $(G,\theta)$ is quasi-split. 
\end{corollary}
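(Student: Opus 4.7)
The proof will be a direct two-way implication, with the nontrivial direction being essentially an application of Theorem \ref{mainthm}.

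For the easy direction, suppose $(G,\theta)$ is quasi-split. Then by definition there is a Borel $B$ with $B\cap\theta(B)$ a maximal torus $T$ of $G$. Let $U$ be the unipotent radical of $B$. I would argue: if $u\in U^\theta$, then $u=\theta(u)\in\theta(U)\subseteq\theta(B)$, so $u\in U\cap\theta(B)\subseteq B\cap\theta(B)=T$. Being simultaneously unipotent and contained in a torus forces $u=e$, so $U^\theta=\{e\}$.

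For the harder direction, suppose $B$ is a Borel with unipotent radical $U$ and $U^\theta=\{e\}$. I first invoke Lemma \ref{tori} to find a $\theta$-invariant maximal torus $T\subset B$. The assumption $U^\theta=\{e\}$ gives $U^\theta\cdot[U,U]=[U,U]$. Since each simple root space $U_\alpha\subset U$ (with respect to $T$) maps nontrivially to the abelianization $U/[U,U]$, no simple root space of $U$ with respect to $T$ can be contained in $[U,U]=U^\theta\cdot[U,U]$. Thus the hypotheses of Theorem \ref{mainthm} are satisfied, and we conclude that $(G,\theta)$ is quasi-split.

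There is no real obstacle here beyond citing the two ingredients correctly: Lemma \ref{tori} to produce a $\theta$-stable $T\subset B$ so that ``simple root space'' makes sense, and Theorem \ref{mainthm} to get from the combinatorial triviality condition on $U^\theta\cdot[U,U]$ to $\theta$-quasi-splitness of $G$. The only place one might worry is whether simple root spaces really have nonzero image in $U/[U,U]$, which is standard: $U/[U,U]\cong\prod_{\alpha\text{ simple}}U_\alpha$ as algebraic groups under the product map, so each $U_\alpha$ injects.
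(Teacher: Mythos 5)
Your proof is correct and follows essentially the same route as the paper: the quasi-split direction is the same triviality (the paper writes $U^\theta \subset U\cap\theta(U)=\{e\}$ where you route through $U\cap\theta(B)\subset T$, which is equivalent), and the converse is exactly the paper's one-line appeal to Theorem \ref{mainthm}, which you merely flesh out by citing Lemma \ref{tori} for the $\theta$-invariant torus and noting simple root spaces are not contained in $[U,U]$. Nothing further is needed.
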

\begin{proof} If the symmetric space $(G,\theta)$ is quasi-split, let $B$ be a Borel
subgroup of $G$ with $B$ and $\theta(B)$ opposite. If $U$ is the unipotent radical of $B$,
clearly $U^\theta = U \cap K \subset U  \cap \theta(U) = \{e\}$, proving one implication in the corollary.

Conversely, assume that 
$U^\theta=1$. Theorem   \ref{mainthm} applies, proving that the symmetric space $(G,\theta)$ is quasi-split. 
\end{proof}
\section{Proof of Lemma \ref{main}}

In this section we work with an arbitrary algebraically closed field $E$ of characteristic not 2 and 
prove Lemma \ref{main} from last section (recalled again here)   
which plays a crucial role in the 
descent argument (from $G$ to $Z_G(A)$) of the previous section. 

The following lemma from Bourbaki \cite{Bou}, Ch. VI, \S1, Corollary 3(a) in Section 6 will play a role
in the proof of Lemma \ref{main} below.

\begin{lemma} \label{connected} Let $R$ be an irreducible root system with $\Delta = \{\alpha_1,\alpha_2,\cdots,\alpha_n\}$ 
a set of simple roots in $R$. For a root $\alpha = \sum_i n_i \alpha_i$ in $R$, let $\Delta(\alpha)$ be the support of $\alpha$
consisting of those simple roots $\alpha_i$ in $\Delta$ for which $n_i\not = 0$. Then $\Delta(\alpha)$ gives rise to a connected subset of the Dynkin diagram of the root system $R$.    
\end{lemma}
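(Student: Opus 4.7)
The plan is to prove Lemma \ref{connected} by induction on the height $\mathrm{ht}(\alpha) = \sum_i n_i$ of a positive root $\alpha$; since $\Delta(\alpha) = \Delta(-\alpha)$, we may assume $\alpha$ is positive. The base case is trivial: if $\alpha$ is a simple root then $\Delta(\alpha)$ is a single vertex of the Dynkin diagram, hence connected.

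For the inductive step, assume $\alpha$ is a positive root of height $\geq 2$. The standard trick is that some simple root $\alpha_i$ must satisfy $(\alpha,\alpha_i) > 0$: otherwise $(\alpha,\alpha) = \sum n_i (\alpha,\alpha_i) \leq 0$, contradicting $\alpha \neq 0$. The root-string property then gives that $\alpha - \alpha_i$ is a (positive) root, to which the inductive hypothesis applies, yielding that $\Delta(\alpha - \alpha_i)$ is a connected subset of the Dynkin diagram. I would then split into two cases according to whether $n_i \geq 2$ or $n_i = 1$. In the first case, $\Delta(\alpha) = \Delta(\alpha - \alpha_i)$ and we are done. In the second case, $\Delta(\alpha) = \Delta(\alpha - \alpha_i) \cup \{\alpha_i\}$, and connectedness reduces to checking that $\alpha_i$ is joined by an edge of the Dynkin diagram to some element of $\Delta(\alpha - \alpha_i)$.

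The main obstacle is this last point, so I would handle it by contradiction. Suppose $\alpha_i$ is orthogonal to every $\alpha_j \in \Delta(\alpha - \alpha_i)$; since $\alpha - \alpha_i$ is a non-negative integer combination of precisely those simple roots, this gives $(\alpha_i, \alpha - \alpha_i) = 0$, hence $(\alpha, \alpha_i) = (\alpha_i, \alpha_i)$ and so $\langle \alpha, \alpha_i^\vee \rangle = 2$. The $\alpha_i$-string through $\alpha$ then has $p - q = 2$ with $p,q \geq 0$, forcing $p \geq 2$ and producing a root $\alpha - 2\alpha_i$. But $\alpha - 2\alpha_i$ has coefficient $-1$ at $\alpha_i$ and a strictly positive coefficient at every element of the non-empty set $\Delta(\alpha - \alpha_i)$, so it is neither a positive nor a negative root, contradicting the sign-coherence of root coordinates. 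Hence $\alpha_i$ must be adjacent in the Dynkin diagram to some element of $\Delta(\alpha - \alpha_i)$, completing the induction.

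A couple of remarks on why no additional input is needed: irreducibility of $R$ enters only insofar as the conclusion is stated relative to the Dynkin diagram of $R$ (the argument is actually local to the irreducible component containing $\Delta(\alpha)$, which is automatically a single component). The sign-coherence of roots and the root-string formula $\langle \alpha, \alpha_i^\vee \rangle = p - q$ are the only nontrivial inputs, both of them standard facts in Bourbaki \cite{Bou}; this matches the fact that the statement is quoted there directly. I expect the only subtle point in a careful write-up to be the bookkeeping in Case $n_i = 1$ to rule out the orthogonal-extension possibility via the $\alpha - 2\alpha_i$ argument sketched above.
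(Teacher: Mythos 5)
Your argument is correct. Note that the paper does not prove this lemma at all: it is quoted verbatim from Bourbaki (Ch.~VI, \S 1, no.~6, Corollary 3(a)), so there is no in-paper proof to compare with; what you have written is a self-contained replacement for that citation. Your induction on height, peeling off a simple root $\alpha_i$ with $(\alpha,\alpha_i)>0$ and splitting on $n_i\geq 2$ versus $n_i=1$, is the standard and correct route, and the contradiction in the $n_i=1$ case (orthogonality of $\alpha_i$ to the rest of the support forces $\langle\alpha,\alpha_i^\vee\rangle=2$, producing the root $\alpha-2\alpha_i$ whose coefficients have mixed signs) is sound. Two small polishing remarks. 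First, your case split tacitly assumes $n_i\geq 1$; you should record the one-line verification that $(\alpha,\alpha_i)>0$ already forces $\alpha_i\in\Delta(\alpha)$, since if $n_i=0$ then $(\alpha,\alpha_i)=\sum_j n_j(\alpha_j,\alpha_i)\leq 0$ because distinct simple roots pair non-positively. Second, in the $n_i=1$ case you can bypass the root-string bookkeeping: $\langle\alpha,\alpha_i^\vee\rangle=2$ means $s_{\alpha_i}(\alpha)=\alpha-2\alpha_i$, which is a root simply because reflections preserve $R$, and then the sign-coherence contradiction proceeds exactly as you wrote.
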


\begin{lemma4} \label{basic}
Let $T=HA$ be a maximal torus for $G$ left invariant by $\theta$ which operates as identity on $H$ and as 
$t\rightarrow t^{-1}$ on $A$. 
Let $B=TU$ be a Borel subgroup of $G$ containing $T$. 
Assume that no simple root space of $B$ with respect to $T$ is contained in $U^\theta\cdot [U,U]$. 
Then $\theta$ acts on any  simple root space of $B\cap Z_G(A)$ with respect to $T$ by $-1$.
\end{lemma4}

\begin{proof} 
It suffices to prove the lemma assuming that $G$ is an adjoint group which is 
then a product of simple adjoint groups $G =G_1 \times G_2 \times \cdots \times G_k$. An involution on such a product group $G$ is built 
out of involutions on $G_i$ and involutions on $G_j\times G_j$ which is permuting the two co-ordinates. The lemma is obvious for the latter involution (in fact, in this case, $Z_G(A)=T$, so the lemma is vacuously true), so we are reduced to assuming $G$ to be an adjoint simple group, which we assume is the case in the rest of the proof.

 Observe that a root $\alpha:T\rightarrow E^\times$ for $G$ is a root for $Z_G(A)$ if and only if $\alpha|_A=1$, equivalently, 
$\theta(\alpha)=\alpha$, i.e.,  $\alpha$ is an imaginary root.  Let $\langle X_\alpha \rangle $ be the corresponding root space. If $\theta(\alpha)=\alpha$, then since $\theta$ is an involution, $\theta(X_\alpha)=\pm X_\alpha$. 
If $X_\alpha$ generates a simple root in $B$, then by the hypothesis that 
``no simple root space of $B$ is contained in $U^\theta\cdot [U,U]$'', we find that for imaginary simple roots 
of $B$, $\theta(X_\alpha)=-X_\alpha$. The subtlety in the lemma arises from the fact that simple roots in 
$Z_G(A)$ may not be simple roots in $G$ which is what we deal with in the proof that follows.

Note that under the assumption  that 
``no simple root space of  $B$ is contained in $U^\theta\cdot [U,U]$'',
for any complex simple root $\alpha$, either $\theta(\alpha)<0$ or $\theta(\alpha)$ is simple.  
To prove this, assume the contrary, and let $\theta(\alpha)>0$ and not simple. 
We give a proof of this which is valid in all odd characteristics. For this, begin by observing that $V=U\cap\theta( U)$ is a $\theta$-invariant and $T$-invariant unipotent subgroup of $U$ which is a ``product'' of root spaces
$U_\beta$ such that both $\beta, \theta(\beta)$ are positive. Being $T$-invariant, $V$ is filtered by ${\mathbb G}_a$. 
If $u_\alpha$ is an element of $U$ in the root space $\alpha$, then we claim that:

\begin{enumerate}
\item If  $x=u_\alpha \cdot \theta(u_\alpha)$, then $x$ belongs to $ U^\theta \cdot [U,U]$;

\item $\theta(u_\alpha)$ belongs to $[U,U]$.  
\end{enumerate}

By $(1)$ and $(2)$ above, it follows that $u_{\alpha} \in U^{\theta}\cdot [U,U]$, contrary to our assumption that 
``no simple root space of $B$ is contained in $U^\theta\cdot [U,U]$''.

The proof of $(2)$ is clear since by assumption, $\theta(\alpha)>0$ and not simple. For the proof of $(1)$, note that $x=\theta(x)$ up to $[V,V]$. 
Since $\theta$ is an involution on $V$ preserving $[V,V]$ which is filtered by ${\mathbb G}_a$ on which multiplication by 2 is an isomorphism, hence $H^1(\theta, [V,V])=0$, therefore a $\theta$-invariant in $V/[V,V]$
can be lifted to a $\theta$-invariant  in $V$.

Now, suppose $\alpha$ is a simple root for $B\cap Z_G(A)$ 
but is not a simple root for $B$. 
Let $\Delta= \{\alpha_1,\alpha_2,\cdots,\alpha_n\}$ be the set of simple roots of $T$ in $B$. 
Write $\alpha$ as a sum of simple roots for $G$:
$$\alpha = \sum_in_i \alpha_i.$$
Applying $\theta$ to this equality, we have:
$$\theta(\alpha)=\alpha  = \sum_in_i \theta(\alpha_i).$$
The set $\{\alpha_1,\alpha_2,\cdots,\alpha_n\}$  of simple roots of $T$ on $B$ consists of 
imaginary roots, real roots and complex roots.  The only possibility for a simple root $\alpha_i$ 
which is taken to a positive root under $\theta$ is when $\alpha_i$ is either  imaginary or complex, and in either case if $\theta(\alpha_i)$
is positive, it is simple. Therefore, the only way $\theta(\alpha)=\alpha$, 
the nonzero $n_i$ 
must correspond to either imaginary roots or to pairs of complex simple roots $\{\alpha_i,\theta(\alpha_i)\}$
with equal coefficients. This follows by considering $h(\alpha)$, the height of a root $\alpha = 
\sum_in_i \alpha_i$ defined by $h(\alpha)= \sum_in_i$ and observing that by the remark above  $h(\theta(\alpha)) \leq h(\alpha)$ with equality if and only if the nonzero $n_i$ 
correspond to either imaginary roots or to pairs of complex simple roots $\{\alpha_i,\theta(\alpha_i)\}$.

 Thus we can write any imaginary root as:
$$\alpha = \sum_{i\in I} n_i \alpha_i + 
\sum_{i\in J}n_i [\alpha_i+\theta(\alpha_i)],$$
with $\alpha_i$ simple imaginary roots for $i \in I$, and complex roots for $i\in J$. 

By Lemma \ref{connected}, the support $\Delta(\alpha)$ of  $\alpha$ is a connected subset of the Dynkin diagram of 
the root system $R$ associated to the group $G$. 

Any connected subset of the Dynkin diagram of $G$ is the Dynkin diagram of a reductive subgroup 
of $G$ sharing a maximal torus and with a simple adjoint group. In our case, the Dynkin diagram
associated to $\Delta(\alpha)$ comes equipped with the involution $\theta$ for which the simple imaginary 
roots in $I$ are the fixed points of $\theta$, and simple complex roots have orbits under $\theta$ of 
cardinality 2.

For our goal of proving that   $\theta$ acts on any  simple root space of $B\cap Z_G(A)$ by $-1$, 
a conclusion we have already made for imaginary simple roots, we can assume that for the simple root $\alpha = \sum_{i\in I} n_i \alpha_i + 
\sum_{i\in J}n_i [\alpha_i+\theta(\alpha_i)],$
$\Delta(\alpha)$ has non-empty set of complex roots. Thus the involution $\theta$ on the connected Dynkin
diagram associated to $\Delta(\alpha)$ is non-trivial. Since we know all connected Dynkin diagrams with a non-trivial
involution, here are all the possibilities, together with a check that in each case $\theta(X_\alpha)=-X_\alpha$.

\begin{enumerate}

\item $\Delta(\alpha) = \{\beta_1,\beta_2,\cdots, \beta_{2n} \}$
 is a root system of type $A_{2n}$, with the unique involution on this root system $\theta$ 
(thus the set $I$ in this case is empty), 
and the root $\alpha$ whose support is all of $\Delta(\alpha)$ must be 
$$\alpha = \beta_1+\beta_2+\cdots+ \beta_{2n} 
= \lambda+\theta(\lambda),$$
where $\lambda = \beta_1+\beta_2+\cdots+ \beta_{n}$ which is a root in $A_{2n}$, hence in $G$.

Therefore,
we can assume that 
$$X_\alpha= [X_\lambda, \theta (X_\lambda)] ,$$
for which clearly $$\theta(X_\alpha)=-X_\alpha,$$
proving the lemma. 

\item $\Delta(\alpha) = \{\beta_1,\beta_2,\cdots, \beta_{2n+1} \}$
 is a root system of type $A_{2n+1}$, $n\geq 1$, with the unique involution on this root system $\theta$ 
(thus the set $I$ in this case has exactly one element $\beta_{n+1}$), 
and the root $\alpha$ whose support is all of $\Delta(\alpha)$ must be 
$$\alpha = \beta_1+\beta_2+\cdots+ \beta_{2n+1} 
= \lambda+\beta_{n+1}+ \theta(\lambda),$$
where $\lambda = \beta_1+\beta_2+\cdots+ \beta_{n}$ 
as well as 
$\lambda+\beta_{n+1}$  is a root in $A_{2n+1}$, hence in $G$.
Therefore,
we can assume that 
$$X_\alpha= [[X_\lambda, X_{\beta_{n+1}}], \theta (X_\lambda)] .$$
Using the Jacobi identity, we can write $\theta(X_\alpha)$ as:
\begin{eqnarray*}
\theta(X_\alpha) &= & \,{} \,\,\,[[\theta(X_\lambda), \theta(X_{\beta_{n+1}})], X_\lambda] \\ 
& \stackrel{(*)} = & -[[X_\lambda, \theta(X_{\lambda})], 
\theta(X_{\beta_{n+1}})] -[[\theta(X_{\beta_{n+1}}), X_{\lambda}], \theta(X_\lambda)]
.\end{eqnarray*}

Now we note that $$\lambda+\theta(\lambda)=\beta_1+\beta_2+\cdots+ \beta_{n} + \beta_{n+2}+\beta_{n+3}+\cdots+ \beta_{2n+1},$$
is not a root in $A_{2n+1}$, and hence $[X_\lambda, \theta(X_{\lambda})]=0$. Further, since $\beta_{n+1}$ is a fixed point of $\theta$, it is a simple imaginary root, therefore $\theta(X_{\beta_{n+1}})=
-X_{\beta_{n+1}}$. Therefore, from the equation $(*)$ above,
$$\theta(X_\alpha)=-X_\alpha,$$
proving the lemma in this case. 

 \item $\Delta(\alpha) = \{\beta_1,\beta_2,\cdots, \beta_{n-2}, \beta_{n-1},\beta_{n} \}$
 is a root system of type $D_{n}$, $n\geq 4$, 
with the unique involution on this root system $\theta$, thus the set $I$ in this case is  $I=\{\beta_1,\beta_2,\cdots, \beta_{n-2}\}$, with $\theta(\beta_{n-1})= \beta_n$.
In the standard co-ordinates, one has $\beta_i=e_i-e_{i+1}$ for $i\leq n-1$, and $\beta_n=e_{n-1}+e_n$.
Since the root $\alpha$ has support all of $\Delta(\alpha)$, one can see that the  only 
possible options for  $\alpha$ are:
\begin{eqnarray*}
& & \alpha  =   \beta_1+\beta_2+\cdots+\beta_i +2(\beta_{i+1}+\cdots + \beta_{n-2})+ \beta_{n-1}+ \beta_{n}= e_1+e_{i+1},
\end{eqnarray*}
for some $1< i \leq n-2$.

Note that one of the necessary conditions for two {\it distinct} roots $\{\alpha,\beta\}$ to be simple
 is that $(\alpha,\beta) \leq 0$. Since we are given that the fixed points of the involution i.e., $\beta_j, j \leq n-2$ are simple roots, we must have $(\alpha, \beta_j) \leq 0 $ for all $j \leq n-2$. 
For the possible  $\alpha$ as above, we have $(\alpha,\beta_1)= (e_1+e_{i+1},e_1-e_2) = 1$. So  
$\Delta(\alpha) $
 cannot be  a root system of type $D_{n}$, $n\geq 4$.

\item $\Delta(\alpha)$
 is a root system of type $E_{6}$:
\usetikzlibrary{chains}
\tikzset{node distance=2em, ch/.style={circle,draw,on chain,inner sep=2pt},chj/.style={ch,join},every path/.style={shorten >=4pt,shorten <=4pt},line width=1pt,baseline=-1ex}

\newcommand{\alabel}[1]{%
  \(\alpha_{\mathrlap{#1}}\)
}
\newcommand{\mlabel}[1]{%
  \(#1\)
}
\let\dlabel=\alabel
\let\ulabel=\mlabel
\newcommand{\dnode}[2][chj]{%
\node[#1,label={below:\dlabel{#2}}] {};
}
\newcommand{\dnodea}[3][chj]{%
\dnode[#1,label={above:\ulabel{#2}}]{#3}
}
\newcommand{\dnodeanj}[2]{%
\dnodea[ch]{#1}{#2}
}
\newcommand{\dnodenj}[1]{%
\dnode[ch]{#1}
}
\newcommand{\dnodebr}[1]{%
\node[chj,label={below right:\dlabel{#1}}] {};
}
\newcommand{\dnoder}[2][chj]{%
\node[#1,label={right:\dlabel{#2}}] {};
}
\newcommand{\dydots}{%
\node[chj,draw=none,inner sep=1pt] {\dots};
}

\bigskip
\begin{tikzpicture}
\begin{scope}[start chain]
\foreach \dyni in {1,3,4,5,6} {
\dnode{\dyni}
}
\end{scope}
\begin{scope}[start chain=br going above]
\chainin (chain-3);
\dnodebr{2}
\end{scope}
\end{tikzpicture}
\bigskip

 This also can be ruled out as in case $(3)$ as we argue now.
Following Bourbaki's \cite{Bou}, Plate V on $E_6$, the only positive roots 
of $E_6$ with all coefficients positive, and  for which the coefficients for $\alpha_i$ and $\theta(\alpha_i)$ are the same, are:
\begin{eqnarray*}
\beta_1 & = &\alpha_1+\alpha_2+\alpha_3+\alpha_4+\alpha_5+\alpha_6, \\ 
\beta_2 &=& \alpha_1+\alpha_2+\alpha_3+2\alpha_4+\alpha_5+\alpha_6, \\ 
\beta_3 & = & \alpha_1+\alpha_2 +2 \alpha_3+ 2 \alpha_4+ 2 \alpha_5+\alpha_6, \\ 
\beta_4 & = & \alpha_1+\alpha_2+2\alpha_3+ 3\alpha_4+ 2 \alpha_5+\alpha_6, \\ 
\beta_5 & = & \alpha_1+2\alpha_2+2\alpha_3+3\alpha_4+2\alpha_5+\alpha_6. 
\end{eqnarray*}
The fixed points of the involution are $\alpha_2$ and $\alpha_4$ which are anyway simple roots for $\Delta(\alpha)$. In the standard normalization, each of $\alpha_i$ has $(\alpha_i,\alpha_i)=2$, and nonzero $(\alpha_i,\alpha_j) = -1$. This allows one to make following calculations:
\begin{eqnarray*}
(\beta_1,\alpha_2) & = & 1, \\ 
(\beta_2,\alpha_4) & = & 1, \\ 
(\beta_3,\alpha_4) & = & 1, \\
(\beta_4,\alpha_4) & = & 1, \\
(\beta_5,\alpha_2) & = & 1,
\end{eqnarray*}
proving that $\Delta(\alpha) $
 cannot be  a root system of type $E_6$.
\end{enumerate}\end{proof}

\section{Proof of Proposition 1}\label{inv2}
In this section we give a proof of Proposition 1. We begin by recalling that a connected reductive algebraic group 
$G$ over an algebraically closed field $E$ is  given by a based root datum $\Psi_G = (X,R,S,X^\vee,R^\vee,S^\vee)$ 
where $X,X^\vee$ are 
finitely generated free abelian groups in perfect duality under a bilinear map $X \times X^\vee \rightarrow \Z$, and $R,R^\vee$ 
are finite subsets of $X,X^\vee$ satisfying certain axioms, see e.g. \cite{Sp2}, and $S$ is a set of simple roots in $R$.
This implies that if $G$ is a 
reductive algebraic group over one algebraically closed field $\bar{k}_1$, 
it makes sense to use the same letter $G$ to denote the corresponding    reductive algebraic group over any other
 algebraically closed field $\bar{k}_2$. The group $G$ can be constructed from the based root datum $\Psi_G = (X,R,S,X^\vee,R^\vee,S^\vee)$
in an explicit way starting with the maximal torus $T(E) = X^\vee \otimes E^\times$. This means that the 2-torsion subgroup 
$T(E)[2] = X^\vee \otimes \Z/2 = X^\vee/2X^\vee$, and the set of conjugacy classes of 2-torsion elements in $G$ which is  
$T(E)[2]/W$ where $W=W_G$ is the Weyl group of the group or of the root system.  Thus
the set of conjugacy classes of 2-torsion elements in $G$
depends only on $G$ and is independent of the algebraically closed field $E$ (as long as it is not of characteristic 2). 
Note also that if $\theta$ is an automorphism of $X^\vee$, it gives rise to an automorphism of $T(E)$ for any algebraically closed field $E$.

For any connected reductive group ${ G}$, say over an algebraically closed field $ E$, a {\it pinning}  on
${G}$ is a triple $(B,T,\{X_\alpha \})$ where $T$ is a maximal torus in ${G}$ contained in a Borel subgroup $B$ of ${ G}$, and $X_\alpha$ are nonzero 
elements in the simple root spaces of $T$ contained in $B$. Automorphisms of ${G}$ fixing a pinning will be said to be {\it diagram automorphisms} of ${G}$ (thus for instance if $G$ is a torus, then any automorphism of $G$ will be called a diagram automorphism). If the center of $G$ is $Z$, the group of diagram automorphisms of $G$ is 
isomorphic to ${\rm Out}(G) = {\Aut (G)}/(G/Z)$, which can be read from the root datum, in fact it is isomorphic to $
{\rm Aut}(X,R,S,X^\vee,R^\vee,S^\vee) ={\rm Aut}(X,R,X^\vee,R^\vee)/W_G$  where $W_G$ is the Weyl group of the group or of the root system, 
and which is a normal subgroup of ${\rm Aut}(X,R,X^\vee,R^\vee)$.

We have 
the following short exact sequence of algebraic groups
which is 
split by fixing a pinning on $G$, 
$$1 \rightarrow {\rm Int}({ G}) = G/Z \rightarrow {\Aut}({G}) 
\rightarrow {\Out}({G}) \cong  {\Aut}({ G}, B, T, \{X_\alpha\}) 
\rightarrow 1.$$

We now come to the proof of Proposition 1 of the introduction which we again recall here.

\begin{proposition1} \label{inv}
If $\bar{k}_1$ and $\bar{k}_2$ are any two algebraically closed fields of characteristic not 2, then for any connected reductive algebraic group $G$, there 
exists a canonical identification of finite sets 
\begin{eqnarray*}
\Aut(G)(\bar{k}_1)[2]/G(\bar{k}_1) 
& \longleftrightarrow & 
 \Aut(G)(\bar{k}_2)[2]/G(\bar{k}_2).
\end{eqnarray*}
Under this identification of conjugacy classes of involutions, if $\theta_1 \leftrightarrow \theta_2$, 
then in particular,

\begin{enumerate}
\item the connected components of identity $(G^{\theta_1})^0$ and $ (G^{\theta_2})^0$ are reductive algebraic groups, and correspond
to each other in the sense defined earlier.
\item the symmetric  space $(G(\bar{k}_1),\theta_1)$ is quasi-split if and only if $(G(\bar{k}_2),\theta_2)$ is quasi-split.
\end{enumerate}
\end{proposition1}

\begin{proof} Observe that any element of 
$\Aut(G)(\bar{k}_1)[2]$ 
gives rise to an element of $\Out(G)[2]$, 
so to prove the proposition, it suffices to prove that for 
any $\theta \in   \Out(G)[2]$, 
the set of elements in 
$\Aut(G)(\bar{k}_1)[2]/G(\bar{k}_1) $ giving rise to this $\theta$ is independent of the algebraically closed field $\bar{k}_1$.

If the element $\theta$ is the trivial element, then we are considering elements of $(G/Z)(\bar{k}_1)[2]$ up to conjugation
by $G(\bar{k}_1)$, which is nothing but $(T/Z)(\bar{k}_1)[2]/W_G$. 
Since we are dealing with fields of characteristic $\not= 2$, the structure of $(T/Z)(\bar{k}_1)[2]$ is independent of the algebraically closed field $\bar{k}_1$, and the action of $W_G$ on it also is independent of $\bar{k}_1$, so the proof of the proposition for such elements is completed. The proof in general is a variant of this proof.

Now, take any $\theta \in   \Out(G)[2]$, and let $(G/Z)(\bar{k}_1) \cdot \theta_0$ 
be the set of elements in 
 $\Aut(G)(\bar{k}_1)$ which project to this element $\theta$ in $\Out(G)[2]$ where $\theta_0$ 
is  the unique element of $\Aut(G)$ fixing the 
pinning chosen earlier and giving rise to the element $\theta$ of $\Out(G)[2]$. 

By a theorem due to Gantmacher, cf. \cite{Bo}, elements of order 2 in  $(G/Z)(\bar{k}_1) \cdot \theta_0$ 
can be 
conjugated, using $G(\bar{k}_1)$, 
to lie inside $(T/Z)(\bar{k}_1) \cdot \theta_0$. Another proof of this theorem follows from a well-known theorem of Steinberg, cf. [St, \S7], according to which for any
semi-simple automorphism $\phi$ of a reductive group, there exists a pair $(B,T)$ consisting of a Borel subgroup $B$ and a maximal torus $T$ inside $B$, which is left
invariant under $\phi$.

An element say $t\cdot \theta_0 \in (T/Z)(\bar{k}_1) \cdot \theta_0$ is of order 2
if and only if $(t \cdot \theta_0)^2=t \cdot \theta_0(t)=1$.  
This gives an identification of the set of  elements of order 2 in  $(T/Z)(\bar{k}_1) \cdot \theta_0$ up to 
conjugation by  $T(\bar{k}_1)$ to $H^1(\langle \theta_0 \rangle,  T(\bar{k}_1))$ where 
$\langle \theta_0 \rangle$ is the group of order 2 generated by $\theta_0$. We calculate $H^1(\langle \theta_0 \rangle,  T(\bar{k}_1))$ in the following lemma.

\begin{lemma} \label{H1} Let $\theta_0$ be an involution on the triple $(G,B,T)$, preserving a pinning on simple roots of $T$ in $B$. Then,
 $$H^1(\langle \theta_0 \rangle,  T(\bar{k}_1)) \cong \prod_{\{\alpha| \theta_0(\alpha) = \alpha\}} \Z/2\Z,$$
product taken over the simple roots of $T$ in $B$ left invariant under $\theta_0$.

An  involution of $G$ of the form $t\cdot \theta_0$ acts on all  the simple roots of $T$ in $B$ left invariant under $\theta_0$ by a sign $\pm 1$, giving
us an element in $\prod_{\{\alpha| \theta_0(\alpha) = \alpha\}} \Z/2\Z$. The above identification of involutions of $G$ of the form $t\cdot \theta_0$ with 
$\prod_{\{\alpha| \theta_0(\alpha) = \alpha\}} \Z/2\Z,$ gives rise to a natural map from $\prod_{\{\alpha| \theta_0(\alpha) = \alpha\}} \Z/2\Z$ to itself, 
which is the identity map.

\end{lemma}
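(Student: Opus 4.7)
The plan is to compute $H^1(\langle\theta_0\rangle,T(\bar k_1))$ by decomposing the cocharacter lattice $X_*(T)$ as a $\Z[\langle\theta_0\rangle]$-module, compute $H^1$ summand by summand, and then trace through the explicit cocycle description to verify the identity-map assertion.

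First, I exploit the pinning. Because $\theta_0$ fixes $(B,T,\{X_\alpha\})$, it permutes the simple roots $\Delta$ and dually permutes the simple coroots $\Delta^\vee$, with orbits of size one (for fixed simple roots, $\theta_0(\alpha)=\alpha$) and size two (for pairs $\{\alpha,\theta_0(\alpha)\}$ with $\theta_0(\alpha)\neq\alpha$). Accordingly, the sublattice of $X_*(T)$ spanned by the simple coroots decomposes as a $\Z[\langle\theta_0\rangle]$-module into trivial summands $\Z$ (one per fixed simple root) and regular summands $\Z[\langle\theta_0\rangle]$ (one per two-orbit). Since the surrounding proof of Proposition 1 has, via Gantmacher's theorem, already passed to the quotient by the centre $Z$, I work with $X_*(T/Z)$, so that these simple-root orbits account for the whole lattice.

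Second, I compute $H^1$ term by term after tensoring with $\bar k_1^\times$, using that $\bar k_1^\times$ is $2$-divisible. A trivial summand $\Z\otimes\bar k_1^\times\cong\Gm(\bar k_1)$ contributes $H^1(\theta_0,\Gm(\bar k_1))=\mu_2(\bar k_1)\cong\Z/2$ (the norm map is squaring and is surjective with kernel $\mu_2$, the coboundary map is trivial). A regular summand $\Z[\langle\theta_0\rangle]\otimes\bar k_1^\times$ contributes zero by Shapiro's lemma combined with Hilbert~90. Summing over orbits gives the claimed isomorphism $H^1(\langle\theta_0\rangle,T(\bar k_1))\cong\prod_{\theta_0(\alpha)=\alpha}\Z/2$.

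Third, for the ``identity map'' assertion, I unravel both maps explicitly. A cocycle class is represented by $t\cdot\theta_0$ with $t\theta_0(t)=1$, and the ``sign on fixed simple roots'' map sends it to the tuple $(\alpha(t))_{\theta_0(\alpha)=\alpha}$; this lies in $\prod\mu_2$ since $\alpha(t)^2=\alpha(t)\alpha(\theta_0(t))=\alpha(t\theta_0(t))=1$ whenever $\theta_0(\alpha)=\alpha$, and it vanishes on coboundaries since $t=s\theta_0(s)^{-1}$ gives $\alpha(t)=\alpha(s)/\alpha(s)=1$. To match this against the decomposition in the previous step, I choose for each fixed simple root $\alpha$ an explicit generator of the corresponding trivial summand — a fundamental coweight $\varpi_\alpha^\vee\in X_*(T/Z)$, so that $\langle\alpha,\varpi_\alpha^\vee\rangle=1$ and hence $\alpha(\varpi_\alpha^\vee(-1))=-1$ — and a straightforward bookkeeping check then shows the composition is the identity of $\prod\Z/2$.

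The main obstacle is the lattice bookkeeping in step one: the clean decomposition into trivial and regular summands is directly valid for the coroot lattice, but $X_*(T)$ can differ from it by an isogeny, and in general the centre contributes additional summands. This is reconciled by passing to $T/Z$, which is already forced by the surrounding argument, and by choosing fundamental coweights (not simple coroots) as generators in step three so that the evaluation pairings $\langle\alpha,\cdot\rangle$ land in $\{\pm 1\}$ rather than in $\{\pm 1\}^{\langle\alpha,\alpha^\vee\rangle}$. Once this is set up, both halves of the lemma follow from the orbit-by-orbit analysis.
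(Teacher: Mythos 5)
Your proof is correct and is essentially the paper's own argument: the paper's proof just observes that $(T/Z)(\bar{k}_1)\cong\prod_{\alpha\in\Delta}\Gm(\bar{k}_1)$ with $\theta_0$ permuting the factors exactly as it permutes the simple roots, so fixed factors contribute $\Z/2$ and swapped pairs contribute $0$ (and it declares the second part ``clear''); this is the same permutation-module computation you carry out, only written on $X^*(T/Z)$ (basis the simple roots) rather than on $X_*(T/Z)$ (basis the fundamental coweights), and like you the paper works with the adjoint torus $T/Z$ even though the statement's notation says $T$. One adjustment to your write-up: the decomposition you need in step one is that of $X_*(T/Z)$ with the $\theta_0$-permuted basis of fundamental coweights $\{\varpi^\vee_\alpha\}$ — the span of the simple coroots is in general only a finite-index sublattice of $X_*(T/Z)$, so the fundamental coweights must enter already there, not only in step three — and with that in place your summand-by-summand $H^1$ computation (trivial summand gives $\mu_2\cong\Z/2$, induced summand vanishes) and your explicit cocycle check with $\varpi^\vee_\alpha(-1)$ establishing the identity-map assertion are exactly what the lemma requires.
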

\begin{proof} 
 The proof is obvious from the observation that $T/Z (\bar{k}_1)$ is a product of $\Gm$ indexed by the simple roots of $T$ in $B$ which are either permuted or fixed under $\theta_0$;
 in the first case,  $H^1= 0$, and in the second case $H^1 = \Z/2\Z$, completing the proof of the first part of the lemma. The second part of the lemma is clear too.
 \end{proof}

Continuing with the proof of the proposition, we need to consider elements $t\cdot \theta_0 \in (T/Z)(\bar{k}_1) \cdot \theta_0$
not up to conjugation by $(T/Z)(\bar{k}_1)$  
but up to conjugation by  $G(\bar{k}_1)$, which is the same as conjugation up to the Weyl group, or what eventually is needed is up to 
$W^{\theta_0}$, the stabilizer of $\theta_0$ in $W$. Thus, by Lemma \ref{H1},
elements $t\cdot \theta_0 \in (T/Z)(\bar{k}_1) \cdot \theta_0$ of order 2 up to conjugation by  $G(\bar{k}_1)$ is the same as 
$$ \left [ \hspace{2mm} \prod_{\{\alpha| \theta_0(\alpha) = \alpha\}} \Z/2\Z \hspace{2mm} \right ] /W^{\theta_0}.$$

This proves that the set of elements of order 2 in  $(G/Z)(\bar{k}_1) \cdot  \theta_0$ up to 
conjugation by  $G(\bar{k}_1)$ has a structure which is independent of the algebraically closed field $\bar{k}_1$,
proving the canonical identification of finite sets 
\begin{eqnarray*}
\Aut(G)(\bar{k}_1)[2]/G(\bar{k}_1) 
& \longleftrightarrow & 
 \Aut(G)(\bar{k}_2)[2]/G(\bar{k}_2),
\end{eqnarray*}
as desired.

The involutions we have 
constructed above belong to $T(\bar{k}_1) \cdot \theta_0$ with $\theta_0$ a diagram automorphism
of $G$, 
thus these involutions preserve $(T,B)$; for such involutions, Steinberg in Theorem 8.2 of \cite{St} proves that the identity component of $G^{\theta_0}$ is a reductive group, and describes
$G^{\theta_0}$ explicitly in terms of root datum, proving that this group is independent of the algebraically closed field
$\bar{k}_1$. (Actually the above mentioned theorem of Steinberg is proved there only for simply connected groups, but it works well for general reductive groups too 
to describe the connected component of identity of $G^{\theta_0}$.)

Finally, we prove the last assertion in the proposition on quasi-split symmetric spaces using Proposition \ref{A-V} 
 below according to which if $(G,\theta)$ is a quasi-split symmetric space over an algebraically closed field, we can fix a 
$\theta$-stable pair $B\supset T$ (a Borel subgroup and a maximal torus) such that every simple root is 
either complex or noncompact imaginary. If an automorphism of $G$ stabilizes a pair $B\supset T$, it is of the form $t\cdot \theta_0$
as before, and now the fact that for this involution, all simple roots are complex or noncompact imaginary is independent of the field.
 \end{proof}

The point of the following proposition is to describe quasi-splitness of a symmetric space $(G,\theta)$  not in terms 
of the maximal $\theta$-split torus as is usually done, 
but rather in terms of the torus  which is so to say minimal $\theta$-split torus.

\begin{proposition} \label{A-V} A symmetric space $(G,\theta)$  
over an algebraically closed field $E$ is quasi-split if and only if there exists a 
$\theta$-stable pair $B\supset T$ (a Borel subgroup and a maximal torus) such that 
every simple root is either complex or noncompact imaginary; equivalently, for all root spaces $\langle X_\alpha \rangle $ where $\alpha$ is a simple root of $T$ in $B$ with
$\theta(\alpha) = \alpha$, $\theta(X_\alpha) = -X_\alpha$.
\end{proposition}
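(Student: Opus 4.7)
The plan is to prove the two directions separately: the reverse implication $(\Leftarrow)$ by reducing to Theorem~\ref{mainthm}, and the forward implication $(\Rightarrow)$ via the Adams-Vogan classification of quasi-split symmetric spaces by their outer part.

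For $(\Leftarrow)$, let $(B, T)$ be a $\theta$-stable pair with every simple root complex or noncompact imaginary. I claim this is equivalent to the Theorem~\ref{mainthm} hypothesis that no simple root space $U_\alpha$ lies in $U^\theta \cdot [U, U]$. Since $B$ is $\theta$-stable, $\theta$ permutes simple roots, so each is imaginary or complex (none real). Revisiting the image of $U^\theta$ in $U/[U, U]$ computed in the proof of Lemma~\ref{subtle}: for a complex simple root $\alpha$, the image projects onto the antidiagonal of $U_\alpha \oplus U_{\theta(\alpha)}$, a proper subgroup; for an imaginary simple root $\alpha$, the image at $U_\alpha$ is the compact eigenspace $U_\alpha^+$, which is all of $U_\alpha$ iff $\alpha$ is compact imaginary. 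Thus $U_\alpha \subset U^\theta \cdot [U, U]$ iff $\alpha$ is compact imaginary simple, so "no compact imaginary simple root" matches the Theorem~\ref{mainthm} hypothesis, yielding quasi-splitness.

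For $(\Rightarrow)$, assume $(G, \theta)$ is quasi-split, let $\theta_0 \in \mathrm{Out}(G)[2]$ be the outer class of $\theta$, and fix a pinning $(B_0, T_0, \{X_\alpha\})$ preserved by a diagram-involution representative $\theta_0$. By Lemma~\ref{H1}, the $G$-conjugacy classes of involutions mapping to $\theta_0$ are parameterized (modulo $W^{\theta_0}$) by the finite set $\prod_{\theta_0(\alpha) = \alpha} \mathbb{Z}/2\mathbb{Z}$, where the $\alpha$-coordinate records the sign by which $\theta$ acts on the $\theta_0$-fixed simple root space $U_\alpha$. The "all $-1$" representative $\theta'$ has $(B_0, T_0)$ $\theta'$-stable with complex non-fixed simple roots and noncompact imaginary fixed simple roots; by $(\Leftarrow)$ already proven, $(G, \theta')$ is quasi-split. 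Since the $G$-conjugacy class of a quasi-split involution is determined by its image in $\mathrm{Out}(G)[2]$ (Theorem~6.14 of \cite{AV}; the argument is combinatorial in the root datum and transfers to any algebraically closed field of characteristic $\ne 2$), $\theta$ is $G$-conjugate to $\theta'$, and the conjugated pinning provides the desired $\theta$-stable pair with the stated simple-root property.

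The main obstacle is the uniqueness of the quasi-split conjugacy class within a given outer part, taken from \cite{AV} as external input. A purely algebraic alternative would use Cayley transforms between $\theta$-stable Cartans: begin with a $\theta$-split Borel $B_0$ and $T_0 = B_0 \cap \theta(B_0)$ (which has no imaginary roots since $\theta$ interchanges positive and negative roots of $T_0$), iteratively Cayley-transform real simple roots to convert them into noncompact imaginary ones, and continue until the resulting Cartan $T$ carries no real roots; then any Borel containing $T$ and corresponding to a regular element of the $+1$-eigenspace of $\theta$ on $X^*(T) \otimes \mathbb{R}$ is $\theta$-stable with simple roots of the required type. Controlling the simple-root types through successive Cayley transforms, in the spirit of the Adams-Vogan block theory and via a rank-two analysis reminiscent of the proof of Lemma~\ref{main}, would be the main technical challenge in this alternative.
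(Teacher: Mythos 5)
Your reverse direction is correct and is essentially the paper's: a $\theta$-stable $B$ has no real simple roots, the image of $U^\theta\cdot[U,U]$ in $U/[U,U]$ lies in the $\theta$-fixed points, so a simple root space can only be swallowed if the root is compact imaginary; hence ``no compact imaginary simple root'' places you exactly in the hypothesis of Theorem \ref{mainthm}, which the paper itself invokes for this implication. (Minor quibble: for a complex pair the image is the graph $\{(u,\theta(u))\}$, i.e.\ a diagonal, not an antidiagonal, but nothing depends on this.)

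The forward direction, however, has a genuine gap. Your argument reduces everything to the statement that within a fixed class in $\Out(G)[2]$ there is exactly one $G$-conjugacy class of quasi-split involutions, and you import this from Theorem 6.14 of \cite{AV}. That theorem is proved for complex groups; your parenthetical claim that ``the argument is combinatorial in the root datum and transfers to any algebraically closed field of characteristic $\neq 2$'' is precisely what is not established, and inside this paper it cannot be obtained by transfer either, because the field-comparison statement (part (2) of Proposition \ref{inv}) is itself proved \emph{using} Proposition \ref{A-V} — so invoking it here would be circular. Indeed the paper explicitly states that its proof of Proposition \ref{A-V} is meant to be independent of \cite{AV} and valid over general algebraically closed fields. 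Your Cayley-transform alternative is only sketched, with the key control of simple-root types through successive transforms left as ``the main technical challenge,'' so it does not close the gap. For contrast, the paper proves the forward direction directly: reduce to adjoint simple groups; if $\theta$ is inner and quasi-split, take $B$ with $B\cap\theta(B)=T$, so $\theta$ is conjugation by a lift of the longest Weyl element $w_0$, and the conjugacy $w_0\sim t_0$ (the element acting by $-1$ on all simple root spaces), established in the proof of Proposition \ref{basic2}, produces the required $\theta$-stable pair; for outer $\theta$ it treats $A_n$ and $D_n$ via their classical models and $E_6$ by noting there are only two outer classes and that $(E_6,F_4)$ fails the dimension bound $\dim(G/K)\geq\dim U$. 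What your route would buy, if the uniqueness of the quasi-split class were proved over $E$ by a root-datum argument compatible with Lemma \ref{H1}, is a uniform proof avoiding the case analysis — but as written that crucial input is assumed, not proved.
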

\begin{proof}
This  proposition is part of the equivalence of parts $(b)$ and $(g)$ of Proposition 6.24 of \cite{AV} proved 
there for complex groups. Here we give an independent  proof  valid for general algebraically closed fields. 

Let's begin by observing that the proposition is true for a symmetric space $(G,\theta)$  if and only if it is 
true for the symmetric space $(G/Z,\theta)$ where $Z$ is the center of $G$ which is naturally left invariant by $\theta$,
inducing an involution on $G/Z$ which is again denoted by $\theta$. Thus for the proof of the proposition, 
we can assume that $G$ is an adjoint group, and hence a 
product of simple groups $G = G_1\times \cdots \times G_d$. Further, we can assume that  $\theta$ either leaves a particular factor $G_i$ invariant, or takes it to another factor, i.e., is the involution $\theta_i: G_i \times G_i \rightarrow G_i \times G_i$ with $\theta_i(x,y)=(y,x)$. 
It is easy to see that the symmetric space $(G_i \times G_i, \theta_i)$ is quasi-split (since it contains the $\theta_i$-split 
Borel subgroup $B_i \times B_i^{\rm op}$), and also has a $\theta_i$-stable 
pair $B_i \times B_i \supset T_i \times T_i$ for which every simple root is complex. Thus the proposition is true for such symmetric spaces.

If $(G_1,\theta_1)$ and $(G_2,\theta_2)$ are symmetric spaces, then clearly the proposition is true for the symmetric spaces 
$(G_1,\theta_1)$ and $(G_2,\theta_2)$ if and only if  the proposition is true for the symmetric space 
$(G_1 \times G_2,\theta_1 \times \theta_2)$. Thus it suffices  to prove the proposition 
for adjoint simple symmetric spaces which is what we assume in the rest of the proof.

If $\theta$ is an inner-automorphism  
preserving $(B,T)$ (for example if the group $G$ has no outer automorphism), then $\theta$ must be
 an inner-conjugation by an element of $T$ which then operates on all simple root spaces of $T$ in $B$ by $-1$ if the condition
``every simple root is either complex or noncompact imaginary'' in the proposition is to be satisfied. By Proposition \ref{basic2}, 
such a symmetric space must be  quasi-split. 

Conversely, if the symmetric space $(G,\theta)$ is quasi-split and $\theta$ is an inner-automorphism,  
let $B$ be a Borel subgroup in $G$ such that $B \cap \theta(B) = T$ is a maximal torus. Then $\theta$ preserves $T$, hence must be the 
inner-conjugation on $G$ by the longest 
element $w_0$ in the
Weyl group of $T$. But it was proved in the course of the proof of Proposition   \ref{basic2}, 
that $w_0$ and $t_0$ (the element of $T$ which acts by $-1$ on all simple roots in $B$) are conjugate in $G$, i.e., $w_0 = gt_0g^{-1}$. It is then easy to see that $gBg^{-1}$ is invariant under $\theta$, on which
$\theta$ operates by conjugation by $gt_0g^{-1}$ which acts by $-1$ on all simple root spaces.

Thus the proposition is proved for all  symmetric spaces $(G,\theta)$ where $G$ is an 
adjoint simple groups, and the  involution $\theta$ is an inner automorphism, 
leaving for us to deal with non-inner involutions on 
$A_n,D_n,E_6$.

Observe too that one part of the proposition that if, ``every simple root is either complex or noncompact imaginary,'' then $(G,\theta)$ is quasi-split is part of Theorem \ref{mainthm}. 
This is also part of the conclusion of Springer in \cite{Sp3}, \S6, \S7.
We need to prove the converse, i.e., if $(G,\theta)$ is a quasi-split symmetric space, then there is a choice of $(B,T)$ invariant under $\theta$ such that
every simple root is either complex or noncompact imaginary. (Note that we are considering $\theta$ which is an  outer automorphism.)

The case of $A_n, D_n$ is easily done using their explicit descriptions as classical groups. For $G=E_6$, there are 2 conjugacy classes of outer involutions. 
One of which can be taken to be the diagram automorphism $\theta_0$ whose fixed point subgroup is $F_4$, and the other involution can be taken 
to be $\theta= t_0\cdot \theta_0$ where $t_0$, or it is the same as $\theta$,  operates on all simple roots left invariant by $\theta_0$ by $-1$. We have clearly
two distinct conjugacy classes of outer automorphism in $\theta_0$  and $\theta$, and since there are only two, these are the two. It suffices then to note that 
$(E_6, F_4)$ is not quasi-split.
But a quasi-split symmetric space has dimension $\geq U$, which in our case is 36, whereas $\dim(E_6/F_4)$ is 26, so $E_6/F_4$ is not quasi-split.
  \end{proof}
\section{Symmetric spaces in characteristic 2} \label{ch2}
In most literature on symmetric spaces $(G,\theta)$, it is traditional to assume that 
one is dealing with fields of characteristic not 2, for example the article of Springer \cite{Sp}, as well as the article of Lusztig \cite{lusztig} assumes this is the case. 
For applications to representation theory, one would prefer not to make this assumption. For instance, 
a basic example of a symmetric space is $G(k) = \GL_{m+n}(k)$  with the involution $\theta: g\rightarrow \theta_{m,n}g\theta_{m,n}$ where $\theta_{m,n}$ is the diagonal matrix
  in $\GL_{m+n}(k)$ with the first $m$ entries $1$, and the last $n$ entries $-1$. 
In this case, the fixed points of the involution is 
$K(k)=\GL_{m}(k) \times \GL_n(k)$, which makes good sense in all characteristics even though the involution itself does not in characteristic 2. In fact, as is well-known, the subgroup $G^\theta$ of $G$ uniquely determines $\theta$ in characteristic zero, 
say by a Lie algebra argument, thus using fixed points of an involution seems a good enough  replacement for the involution 
itself till we realize we have lost the main anchor for the arguments with symmetric spaces.

Thus the first order of business is to define what's meant by a symmetric space in characteristic 2, which we now take as a pair $(G,K)$ with $K$ a `symmetric' subgroup of $G$,
 which we will presently define. 
It may be remarked that one reason for circumspection regarding involutions (and therefore symmetric spaces) 
in characteristic 2 is that their fixed point subgroups need not be reductive. Our definition below will continue to assume 
reductiveness for the subgroup although it  seems to be useful not to insist on it so as not to exclude the interesting  
example of Shalika subgroup (centralizer of the unipotent element  
$u=\left  ( \begin{array}{cc}
{I_n}&  I_n  \\
0& {I_n} 
\end{array} \right)$ 
of order 2 
inside $\GL_{2n}(F)$ where $I_n$ is the $n \times n$ identity matrix).

\begin{definition} (a)(Symmetric subgroup) Let $(G,K)$ be a pair, consisting of a 
connected reductive algebraic group $G$, and a connected reductive subgroup $K$ of $G$ over a field $k$. 
The subgroup $K$ of $G$ 
is said to be a symmetric subgroup of $G$ if 
one can spread these to split reductive group schemes $K_R\subset G_R$ where $R$ is a discrete valuation ring with residue field $R/{\mathfrak m}=k$, 
and quotient field $L$ (of characteristic 0), 
such that $(G_L,K_L)$ is a symmetric space
in the usual sense (over a field of characteristic zero now) defined by an involution on $G_R$.

(b) (Symmetric space) A pair of reductive groups $(G,K)$ over a field $k$, 
with $K$ a symmetric subgroup over ${k} $ as defined in $(a)$, 
will be said to be a symmetric space over $k$. 

(c) (Quasi-split symmetric space) A symmetric space $(G,K)$ over a field $k$ of characteristic 2 will be said to be quasi-split if the corresponding symmetric space 
$(G_L,K_L)$ over $L$ is quasi-split. 
\end{definition}

\begin{proposition}
With the notation as in the definition above, if a symmetric space $(G,K)$ over 
$k$, a finite or non-archimedean local field,
has property $[G]$, i.e., there exists a Borel subgroup $B$ of $G$ with unipotent radical $U$ such that 
$(U\cap K) \cdot [U,U]$ 
contains no simple root,
then the corresponding symmetric space  over $L$ also has this property, 
hence is quasi-split by Theorem \ref{mainthm}.  
\end{proposition}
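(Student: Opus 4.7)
The plan is to show that property $[G]$ for $(G,K)$ over $k$ ascends to property $[G]$ for $(G_L,K_L)$ over $L$; Theorem \ref{mainthm} applied to $(G_L,K_L)$ then supplies quasi-splitness. The strategy is to spread everything over $R$ using the integral model inherent in the definition of a symmetric subgroup, and to exploit flat closures to pass from the special fiber to the generic fiber.

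First I would lift the Borel. Since $G_R$ is split reductive, it has a reference Borel $B_R^\circ$, and the Borel $B\subset G$ of property $[G]$ over $k$ is $G(k)$-conjugate to $B_k^\circ$; smoothness of $G_R$ (after henselizing $R$ if necessary) lets us lift this conjugating element to an $R$-point and obtain an $R$-Borel $B_R$ with special fiber $B$. Let $U_R$ be its unipotent radical; the abelianization $U_R/[U_R,U_R]\cong\mathbb{G}_{a,R}^d$ decomposes canonically into simple root subgroups $U_{\alpha,R}$ flat over $R$, whose fibers match those used to define property $[G]$ over $k$ and over $L$.

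Next I would argue by contradiction. Suppose $(G_L,K_L)$ fails property $[G]$: some simple $\alpha$ satisfies $U_{\alpha,L}\subseteq M_L:=(U_L\cap K_L)\cdot[U_L,U_L]$, the goal being to deduce $U_{\alpha,k}\subseteq M_k:=(U\cap K)\cdot[U,U]$, contradicting property $[G]$ over $k$. Define $M_R\subseteq U_R$ as the scheme-theoretic image of the multiplication map $(U_R\cap K_R)\times[U_R,U_R]\to U_R$; this is a closed subgroup scheme of $U_R$ whose generic fiber equals $M_L$ (by flat base change along $R\to L$) and whose special fiber contains the set-theoretic image $M_k$. The hypothesis $U_{\alpha,L}\subseteq M_L$, combined with the fact that $U_{\alpha,R}$ is the flat closure of $U_{\alpha,L}$ in $U_R$, yields $U_{\alpha,R}\subseteq M_R$ (the flat closure is contained in any closed subscheme of the flat ambient $U_R$ with the same generic fiber), hence $U_{\alpha,k}\subseteq M_R\otimes_R k$.

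The main obstacle is to identify $M_R\otimes_R k$ with $M_k$: this would close the contradiction, but scheme-theoretic image does not commute with the non-flat base change $R\to k$ in general. The identification follows once $H_R:=U_R\cap K_R$ is flat over $R$: then the scheme-theoretic image of the morphism from the flat source $H_R\times[U_R,U_R]$ into the flat target $U_R$ is itself flat, so $M_R$ equals the flat closure of $M_L$, and a dimension count (noting that $(M_R)_k$ and $M_k$ are both connected closed subgroups of $U_k$, the former of dimension $\dim M_L=\dim M_k$ by flatness) forces $(M_R)_k = M_k$. Flatness of $H_R$ is classical in residue characteristic $\neq 2$, where fixed-point schemes of involutions on smooth group schemes are smooth; in residue characteristic $2$ it has to be verified directly via the explicit root-subgroup presentation of $U_R$ together with the explicit action of $\theta$ on $G_R$ provided by the split $R$-model. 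Once flatness is in hand, the argument closes and Theorem \ref{mainthm} applied to $(G_L,K_L)$ supplies the quasi-splitness.
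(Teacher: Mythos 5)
Your proposal follows the same skeleton as the paper's proof: spread the pair to the split model over $R$, and transfer the containment of a simple root subgroup from the generic fibre to the special fibre via schematic closures, the whole weight resting on identifying the special fibre of the $R$-model of $M=(U\cap K)\cdot[U,U]$ with $M_k$ (the paper compresses exactly this into the assertion that $(U\cap K)\cdot[U,U]$ over $k$ is the reduction modulo $\mathfrak m$ of $(U_R\cap K_R)\cdot[U_R,U_R]$). You correctly isolate this identification as the main obstacle, but your resolution of it does not work. Flatness of $H_R=U_R\cap K_R$ does not give $\dim M_L=\dim M_k$: fibrewise one has $\dim M=\dim H+\dim[U,U]-\dim(H\cap[U,U])$, and the intersection $H\cap[U,U]$ can jump in dimension at the special fibre even when $H_R$ is flat. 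Concretely, take $U_R$ the Heisenberg group over $R$ with coordinates $(a,b,c)$, multiplication $(a,b,c)(a',b',c')=(a+a',b+b',c+c'+ab')$, so $[U_R,U_R]=\{(0,0,c)\}$, and let $H_R=\{(\pi t,0,t)\}$ with $\pi$ a uniformizer. Then $H_R$ is a flat closed subgroup scheme; $M_L=H_L\cdot[U_L,U_L]=\{(a,0,c)\}$ is two-dimensional and contains the simple root group $U_{\alpha,L}=\{(a,0,0)\}$, while $M_k=[U_k,U_k]$ is one-dimensional and contains no simple root group, and $(M_R)_k=\{(a,0,c)\}_k\supsetneq M_k$. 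So the parenthetical "$\dim M_L=\dim M_k$ by flatness" is false as a general implication, and with it the step $(M_R)_k=M_k$, which is exactly what your contradiction needs. Put differently: your argument, if valid, would prove the ascent of property $[G]$ for an arbitrary flat closed subgroup scheme $H_R\subseteq U_R$, and the example shows that statement is false. One must use the specific structure of $U_R\cap K_R$ coming from the involution on $G_R$ (e.g.\ a root-subgroup description relative to a $\theta$-stable $T_R$, which is what makes the paper's reduction claim legitimate), and your write-up never brings that structure to bear on the identification $(M_R)_k=M_k$.

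A second gap sits in the flatness of $H_R$ itself: you call it classical in residue characteristic $\neq 2$ via smoothness of fixed-point schemes of involutions, and defer residue characteristic $2$ to a direct verification you do not carry out. But the proposition lives in Section~\ref{ch2}: the only case it is designed for is $k$ of characteristic $2$, so $2\in\mathfrak m$ and the fixed-point-scheme argument is unavailable over $R$; moreover $K_R$ is by definition a split reductive (hence smooth) subgroup scheme, so the issue was never the smoothness of $K_R$ but the flatness of the intersection $U_R\cap K_R$, which is precisely the delicate point. Thus both halves of your treatment of the key identification --- flatness of $H_R$ in the relevant case, and the passage from flatness to $(M_R)_k=M_k$ --- are missing, the second being irreparable without invoking the root-theoretic shape of $U_R\cap K_R$.
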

\begin{proof} It may be conceptually simpler to fix $U$, and vary $K$ (up to conjugacy by $G(k)$), allowing us to use 
a fixed unipotent group scheme $U_R$ and a maximal torus $T_R$ normalizing $U_R$ together with its root system. 
Since $(U\cap K) \cdot [U,U]$ is the reduction modulo $\mathfrak m$ of the  unipotent group $(U_R\cap K_R) \cdot [U_R,U_R]$ in
$G_R$, this is clear. 
\end{proof}

\begin{remark}
 In the absense of Lemma \ref{subtle} in charactaristic 2, we are not able to prove that 
Theorem \ref{mainthm1} remains valid for symmetric spaces over 
$k$, a finite or non-archimedean local field, in characteristic 2;
in more detail, we are not able to prove that if $G$ is quasi-split over $k$,  and 
if $G(k)$ has a generic representation distinguished by 
$K^1(k)$ for $K^1=[K^0,K^0]$ where $K^0$ is the connected component of identity of $K$,
then the symmetric space $(G,K)$ is quasi-split. 
\end{remark}

\section{Examples}
This paper was conceived to explain many examples 
of  symmetric spaces $(G,\theta)$ for which it was known that there are no generic representations of $G(k)$ 
distinguished by ${G^\theta(k)}$. Usually such 
theorems are known only in the context of $G=\GL_n$, and proved  by different methods (Gelfand pairs, mirabolic subgroups, theory of derivatives, ...). Here are some of these examples, all consequence of our
Theorem \ref{mainthm1}.

\begin{enumerate}
 \item  
Let $G(k) = \GL_{m+n}(k)$, $\theta = \theta_{m,n}$ the involution $g\rightarrow \theta_{m,n}g\theta_{m,n}$ where $\theta_{m,n}$ is the diagonal matrix
  in $\GL_{m+n}(k)$ with first $m$ entries $1$, and last $n$ entries $-1$. In this case, the real reductive group $\G_{\theta}$ is the group $\U(m,n)$ which  is quasi-split over $\R$ if and only if $|m-n| \leq 1$. It is a theorem due to Matringe,
Theorem 3.2 in \cite{matringe},  that if
  there is a generic 
representation of $\GL_{m+n}(k)$ distinguished by $\GL_m(k) \times GL_n(k)$ then $|m-n| \leq 1$.

\item   Let $G(k) = \GL_{2n}(k)$, $\theta$ the involution on $\GL_{2n}(k)$ given by 
$g\rightarrow J{}^t g ^{-1} J^{-1}$ with $J$ any skew-symmetric matrix in $\GL_{2n}(k)$.  The fixed point set of $\theta$ 
is the symplectic group $\Sp_{2n}(k)$. In this case, the real reductive group $\G_{\theta}$ is the group $\GL_n(\qH)$ where $\qH$ is the quaternion division algebra over $\R$. The real reductive group $\GL_n(\qH)$ is not quasi-split, and it is known that there are
no generic 
representations of $\GL_{2n}(k)$ distinguished by $\Sp_{2n}(k)$, a theorem due to Heumos-Rallis \cite{HR}.

\item   Let $G(k) = \GL_{n}(k)$, $\theta$ the involution on $\GL_{n}(k)$ given by 
$g\rightarrow J{}^t g ^{-1} J^{-1}$ with $J$ any symmetric matrix in $\GL_{n}(k)$.  The fixed point set of $\theta$ 
is the orthogonal group $\OO_{n}(k)$. In this case, the real reductive group $\G_{\theta}$ is the group $\GL_n(\R)$ which is  split.
It is known that there are generic 
representations of $\GL_{n}(k)$ distinguished by $\OO_{n}(k)$.

\vspace{2mm}

Twisted analogues of examples in $(1),(2),(3)$, such as:

\vspace{2mm}

\item $\U(V+W)$ 
  containing $\U(V) \times \U(W)$ as fixed point of an involution. 
The group $G_\theta$ is the same as in $(1)$, so not quasi-split if $|\dim V -\dim W|>1$; 
we are not certain if one knew before that there are no  generic representations of $\U(V+W)$ (assumed to be quasi-split)
  distinguished by  $\U(V) \times \U(W)$ if $|\dim V -\dim W|>1$.

\item $\U(V\otimes E)$ is a quasi-split unitary group over $F$ with $E/F$ quadratic, $V$ a symplectic space over $F$ with $V\otimes E$ the corresponding skew-hermitian space over $E$, then $\Sp(V) \subset \U(V\otimes E)$, and analogous to the work of Heumos-Rallis \cite{HR}, there are no generic representations
of $\U(V\otimes E)$ distinguished by $\Sp(V)$. This result is due to \cite{Kum}, Theorem 4.4.

Here are the  other classical groups.

\item $\SO(V+W)$ 
  containing ${\rm S}[\OO(V) \times \OO(W)]$ as the fixed points of an involution. Assume that $\dim V=m$, $\dim W=n$. 
Then the  group $G_\theta = \SO(m,n)(\R)$ which  is  not quasi-split if $|\dim V -\dim W|>2$. 
We are not certain if one knew before that there are no  generic representations of $\SO(V+W)$ (assumed to be quasi-split)
  distinguished by  ${\rm S}[\OO(V) \times \OO(W)]$ if $|\dim V -\dim W|>2$.

\item $\Sp(V+W)$ 
containing $\Sp(V) \times \Sp(W)$ as the fixed points of an involution. Assume that $\dim V=m$, $\dim W=n$. 
In this case the  group $G_\theta = \Sp(m,n)(\R)$ which  is  never quasi-split. 
We are not certain if one knew before that 
there are no  generic representations of $\Sp(V+W)$ distinguished by  $\Sp(V) \times \Sp(W)$. These are vanishing pairs of \cite{AGR}, i.e., there are no  cuspidal representations of $\Sp(V+W)$ distinguished by  $\Sp(V) \times \Sp(W)$. 

There is also the twisted analogue of this example $G=\Sp_{4n}(F) \supset \Sp_{2n}(E)=H$. In this case also,
by our theorem,  there are no
generic representations of $G$ distinguished by $H$.

\item Besides the involutions used in the previous two examples, there is another kind of involution for the groups $\SO_{2n}(F)$ as well as $\Sp_{2n}(F)$. For this suppose $V=X+X^\vee$ is a complete polarization on a $2n$-dimensional vector space over 
$F$ which may be orthogonal or symplectic. Define $j_0$ to be the involution on the corresponding classical group 
   $G(V)$ obtained by inner conjugation of the element $j \in \GL(V)(F)$ which acts on $X$ by multiplication by $i=\sqrt{-1}$, and
on $X^\vee$ by multiplication by $-i$. These involutions define $G_\theta=\SO^{\star}(2n,\R), \Sp(2n,\R)$ 
in the orthogonal and 
symplectic cases respectively with maximal compact $\U(n,\R)$ in both cases. The group $  \SO^{\star}(2n)$ 
is not quasi-split, 
whereas $\Sp(2n,\R)$ is. Therefore, by our theorem, there are no
 generic representations of 
$\SO_{2n}(F)$ which are distinguished by $\GL_n(F)$, whereas there are generic representations of 
$\Sp_{2n}(F)$ distinguished by $\GL_n(F)$ (Theorem \ref{mainthm1} only rules out generic distinguished representations and does not construct one when it allows one which we take up in the next section). We are not certain if one knew before that 
there are no  generic representations of $G(V)=\SO_{2n}(F)$ distinguished by  $\GL_n(F)$. It is a  vanishing pair of \cite{AGR} (they assert  this only for $n$ odd whereas we do not distinguish between $n$ even and $n$ odd).

\end{enumerate}

\section{The converse and concluding remarks}\label{converse}
In this paper we have proved that if 
for a symmetric space 
$(G,\theta)$ over a finite or a non-archimedean local   field $k$, there is an irreducible generic representation
of $G(k)$ distinguished by $G^\theta(k)$, then the symmetric space over $\bar{k}$ is quasi-split. It is
worth emphasizing  that this theorem used 
knowledge of $G$ as well as $\theta$ only over $\bar{k}$.

If the symmetric space is quasi-split over $k$, we
have the following converse. The author thanks Matringe for his help with the proof 
of this proposition especially by  suggesting the use of Proposition 7.2 of \cite{offen}
in this proof.

\begin{proposition}
Let $(G,\theta)$ be a symmetric space over a finite or a non-archimedean local   field $k$ which is quasi-split over $k$, thus there is a Borel subgroup $B$ of $G$ over $k$ 
 with $B\cap \theta(B) = T$,  a maximal torus of $G$ over $k$. If $k$ is finite, assume that its cardinality is large enough (for a given $G$). Then    there is an irreducible generic unitary principal series representation
of $G(k)$ distinguished by $G^\theta(k)$.
\end{proposition}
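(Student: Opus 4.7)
The approach is to produce $\pi$ as a (normalized) principal series $\Ind_B^G(\chi)$ for a suitable unitary character $\chi$ of $T(k)$, with the distinction arising from the open $K(k)$-orbit on the flag variety $G/B$.

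First, two geometric observations. The hypothesis $B \cap \theta(B) = T$ implies that $B(k)K(k)$ is open in $G(k)$: indeed, $B \cap K$ consists of $b \in B$ with $\theta(b)=b$, hence lies in $B \cap \theta(B) = T$, so $B\cap K = T^\theta$; and by Remark \ref{dim} applied to our quasi-split symmetric space, $\dim K - \dim T^\theta = \dim U$, so $K\cdot eB$ is open in $G/B$ with stabilizer $T^\theta$. Second, the modular character $\delta_B$ is trivial on $T^\theta(k)$: since $\theta$ exchanges $\Phi^+$ and $\Phi^-$, the positive roots pair up as $\{\alpha, -\theta\alpha\}$, and each such pair contributes $|\alpha(t)|\cdot|(-\theta\alpha)(t)| = |\alpha(t)|\cdot|\alpha(t)|^{-1} = 1$ on $T^\theta$ (the self-paired roots, with $\alpha = -\theta\alpha$, satisfy $\alpha(t)^2=1$ automatically).

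Now for any character $\chi$ of $T(k)$ trivial on $T^\theta(k)$ and any $f \in \Ind_B^G(\chi)$, the restriction $f|_{K(k)}$ satisfies $f(tk) = \chi(t)\delta_B(t)^{1/2}f(k) = f(k)$ for $t \in T^\theta(k)$, and hence descends to a function on $T^\theta(k)\backslash K(k)$. Integrating against the $K(k)$-invariant measure $d\bar k$ on this quotient gives a formal $K(k)$-invariant functional
\[
\lambda_\chi(f) \;=\; \int_{T^\theta(k)\backslash K(k)} f(k)\,d\bar k,
\]
and for $\chi$ in a suitable non-empty open subset of characters trivial on $T^\theta(k)$, this converges (or extends by regularization) to a nonzero $K(k)$-invariant linear functional on the full induced representation. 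This is exactly the content of Proposition 7.2 of \cite{offen}, which I would invoke directly.

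It then remains to choose $\chi$ that is also unitary and such that the principal series is irreducible. The characters of $T(k)$ trivial on $T^\theta(k)$ form a non-trivial subgroup of $\widehat{T(k)}$: if $T$ equalled $T^\theta$, then by Lemma \ref{trivialonT} the involution $\theta$ would be inner conjugation by an element of $T$, hence preserve $B$, contradicting $\theta(B) = B^-$ (for non-trivial $G$). For $k$ finite all characters are unitary, and under the assumption that $|k|$ is sufficiently large irreducibility of $\Ind_B^G(\chi)$ holds on a non-empty subset; for $k$ non-archimedean, unitary characters are Zariski-dense in the full character variety and irreducibility is a generic condition. Genericity is automatic since $G$ is quasi-split: the standard Jacquet integral produces a nonzero Whittaker functional on $\Ind_B^G(\chi)$. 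The main obstacle is the middle step --- the naive integral $\lambda_\chi$ need not even converge, since $T^\theta(k)\backslash K(k)$ is typically non-compact; extending it to a nonzero functional on the whole induced representation (rather than merely on a dense subspace) is precisely where Offen's Proposition 7.2 bears the technical weight of the argument.
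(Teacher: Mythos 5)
Your proposal follows essentially the same route as the paper: the open orbit $B(k)K(k)$ with stabilizer $H=B\cap K=T^\theta$, the triviality of $\delta_B$ on $H(k)$ (your root-pairing computation is just a rephrasing of Lemma \ref{modulus}), the invariant functional on the open orbit extended to all of $Ps(\chi)$ via Blanc--Delorme as packaged in Proposition 7.2 of \cite{offen}, and finally a choice of unitary $\chi$ trivial on $H(k)$ making the principal series irreducible, with genericity free from quasi-splitness of $G$.

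The one place where your argument is not actually complete is the last step in the non-archimedean case. The assertion that ``unitary characters are Zariski-dense in the full character variety and irreducibility is a generic condition'' does not do the job as stated: the characters available to you are constrained to lie in the proper subgroup of characters trivial on $T^\theta(k)$, and knowing only (as you argue via Lemma \ref{trivialonT}) that this subgroup is non-trivial does not rule out that every such character is fixed by some non-trivial Weyl element, which is exactly what Bruhat irreducibility requires you to exclude. To close this you need either the observation that no $w\neq 1$ in $W$ can fix all characters trivial on $H(k)$ --- which comes down to $Z_G(A)=T$ for the maximal $\theta$-split torus $A$, i.e.\ precisely the quasi-split hypothesis --- or, as the paper does, an explicit witness: take $\chi=\delta_B^{it}$ for $t\neq 0$, which is unitary, trivial on $H(k)$ by Lemma \ref{modulus}, and regular (no non-trivial Weyl element fixes it, since $2\rho$ is regular), so Bruhat's criterion gives irreducibility. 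With that one repair your proof coincides with the paper's.
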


\begin{proof}
It is easy to see in the non-archimedean local   case by a well--known Lie algebra argument that $K(k)\cdot B(k)$ is an open subset of 
$G(k)$ giving rise to an open subset $H(k)\backslash K(k) \subset B(k)\backslash G(k)$ 
where $H(k) = K(k)\cap B(k)$ is the $k$-points of a torus inside $K$ contained in the maximal torus
$T = B\cap \theta(B) $ of $B$. Thus 
we have the inclusion $\Sc(H(k)\backslash K(k)) \subset \Sc(B(k)\backslash G(k))$ 
of compactly supported functions. If $Ps(\chi)$ is the principal series representation of $G(k)$ induced using a character $\chi:T(k)\rightarrow \C^\times$ for which $\chi|_{H(k)}=1$, 
then have the inclusion   
$\Sc(H(k)\backslash K(k)) \subset Ps(\chi)$; 
it is  important at this point to note 
that this inclusion remains true for normalized induction because of  lemma \ref{modulus} below.

By Frobenius reciprocity, the subspace $\Sc(H(k)\backslash K(k)) \subset Ps(\chi)$ carries an $H(k)$-invariant linear form 
if  $\chi|_{H(k)}=1$.
 It is a result of Blanc-Delorme, cf. Theorem 2.8 of \cite{BD}, see Proposition 7.2 of \cite{offen} for a precise statement in our specific context, 
 that if $\chi|_{H(k)}=1$, then indeed
the principal series representation $Ps(\chi)$ is distinguished. 

It suffices then to construct characters $\chi$ of $T(k)$ with $\chi|_{H(k)}=1$ such that 
the principal series representation $Ps(\chi)$ is irreducible. This can be done for $k$ a finite field 
under the hypothesis that it has large enough cardinality (for a given $G$) by choosing a regular character of
$(T/H)(k)$, i.e., one for which $\chi^w\not = \chi$ for any $w\not = 1$ in  $W_G$.

For $k$ a non-archimdean local field,  note that 
by Lemma \ref{modulus} if $\delta_B$ is the modulus function of $B$, then $\delta_B$ restricted to $H(k)$ is trivial.
Therefore, for any nonzero real number $t$, $\chi=\delta_B^{it}$ is a unitary character of $T$, trivial on $H(k)$, 
and is not invariant by any nontrivial element of the Weyl group of $G$. By well-known results of Bruhat,
such principal series representations of $G$ are irreducible, completing the proof of the proposition.
\end{proof}
\begin{lemma} \label{modulus}
With the notation as above, if $\delta_B$ is the modulus function of $B$, then $\delta_B$ restricted to $H(k)$ is trivial.
\end{lemma}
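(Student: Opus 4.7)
My plan is to exploit the fact that $\theta$ swaps $B$ with its opposite Borel to show that each eigenvalue of $h \in H(k)$ on $\mathrm{Lie}(U)$ is paired with its inverse.

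First I would unwind the setup on $H$. Writing $B = TU$ and using that $\theta(U)\cap U = \{e\}$ (since $\theta(B)$ is opposite to $B$), a direct check shows $B^\theta = T^\theta$, so in particular $H(k) \subset T(k)$. Since $T$ is a maximal torus of $G$ defined over $k$, I can use the root-space decomposition of the Lie algebra $\mathfrak{u}$ of $U$, and record the standard formula
\[
\delta_B(t) \;=\; \prod_{\alpha \in \Phi^+}\, \lvert \alpha(t) \rvert_k \qquad (t \in T(k)),
\]
where $\Phi^+$ is the set of roots of $T$ occurring in $U$.

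Next I would analyze the action of $\theta$ on characters of $T$. Since $\theta$ preserves $T$, it acts on $X^*(T)$ by $\theta^*\alpha := \alpha\circ\theta|_T$, and $\theta$ sends $U_\alpha$ to $U_{\theta^*\alpha}$. The hypothesis $B\cap\theta(B)=T$ says $\theta(U)=U^-$, so $\theta^*$ restricts to a bijection $\Phi^+ \to \Phi^-$; equivalently, $\alpha\mapsto -\theta^*\alpha$ is a bijection $\Phi^+\to\Phi^+$.

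Now the key step: for $h\in H(k)$ we have $\theta(h)=h$, hence for every root $\alpha$,
\[
\alpha(h) \;=\; \alpha(\theta(h)) \;=\; (\theta^*\alpha)(h).
\]
Setting $\beta := -\theta^*\alpha \in \Phi^+$, this reads $\alpha(h) = \beta(h)^{-1}$. Substituting into the modulus formula and using that $\alpha\mapsto\beta$ permutes $\Phi^+$,
\[
\delta_B(h) \;=\; \prod_{\alpha\in\Phi^+}\lvert\alpha(h)\rvert_k \;=\; \prod_{\beta\in\Phi^+}\lvert\beta(h)\rvert_k^{-1} \;=\; \delta_B(h)^{-1}.
\]
Since $\delta_B$ is positive-real-valued, this forces $\delta_B(h)=1$, which is exactly what was claimed. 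There is no real obstacle here; the only thing to be careful about is the direction of the $\theta$-action on $X^*(T)$ and the identification $H = T^\theta$, both of which are routine.
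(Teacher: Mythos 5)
Your proof is correct and is essentially the paper's argument: the paper observes $\delta_{\theta(B)}(\theta(t))=\delta_B(t)$ and $\delta_{\theta(B)}(t)=\delta_B(t^{-1})$ (since $\theta(B)$ is the opposite Borel), so $\theta$-fixed elements satisfy $\delta_B(h)=\delta_B(h)^{-1}$, which is exactly the identity you derive by pairing each positive root $\alpha$ with $-\theta^*\alpha$. Your version just unpacks this at the level of the root-space decomposition of $\mathrm{Lie}(U)$, so there is nothing further to add.
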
 
\begin{proof}Observe that $\delta_{\theta(B)}(\theta(t)) = \delta_B(t)$. 
Therefore if $\theta(t)=t$ as is the case for
elements in $H(k)$, we have $\delta_{\theta(B)}(t) = \delta_B(t)$. 
On the other hand, $\theta(B)$ being the opposite Borel, $ \delta_{\theta(B)}(t) = \delta_B(t^{-1})$. This completes the proof of the lemma. \end{proof}

Our paper has dealt with quasi-split symmetric spaces, 
but if the symmetric space $(G,\theta)$ is not quasi-split, question arises as to  what are the `largest'
representations which contribute to the spectral decomoposition of $L^2(K(k)\backslash G(k))$? There is a natural  
$\SL_2(\C)$ inside the $L$-group of $G$ which controls this, and which can be constructed as follows. Let $T=HA$ be a 
maximal torus in $G$ on which $\theta$ operates by identity on $H$, and $A$ is a maximal $\theta$-split torus in $G$. 
The centralizer $M=Z_G(A)$ of $A$ in $G$ is a Levi subgroup of $G$, hence it associates  a Levi subgroup $\widehat{M}$ in the dual group
$\widehat{G}$ too. The $\SL_2(\C)$ associated to the regular unipotent conjugacy class in $\widehat{M}$ plays an important role for 
$L^2(K(k)\backslash G(k))$ whose spectral analysis is dictated by the centralizer of this (Arthur) $\SL_2(\C)$ in $\widehat{G}$. 
For example, for the symmetric space $(\GL_{2n}(k),\Sp_{2n}(k))$, $M=\GL_2(k)^n$, and we will be looking at 
the representation of $\SL_2(\C)$ inside $\GL_{2n}(\C)$ which is $[2]+ \cdots +[2]$ where $[2]$ is the standard 2-dimensional representation of $\SL_2(\C)$, with centralizer $\GL_n(\C)$ which is well-known to control the 
spectral decomoposition of $L^2(\Sp_{2n}(k)\backslash \GL_{2n}(k))$.  

Our previous analysis done in the presence of a $\theta$-split Borel subgroup can be done with a minimal $\theta$-split
parabolic $P=MN$ with $M=Z_G(A) = Z_K(A)\cdot A$, and now involve principal series representations $Ps(\pi)$ induced from
an irreducible representation $\pi$ of $M$ but which are trivial on $Z_K(A)$ (which contains the derived subgroup of $M$) to
describe $K(k)$-distinguished (principal series) representations of $G$. Because for 
these principal series representations $Ps(\pi)$, $\pi$ is trivial on the derived subgroup of $M$, 
 $\pi$ is one dimensional, giving some substance to the suggestion in the previous paragraph.

\vspace{5mm}

\noindent{\bf Acknowledgement:} The author thanks the University of Maryland for hosting his stay 
in an academically stimulating environment 
in the fall of 2017 
where this work was started. He thanks J. Adams for many inspiring discussions. He thanks Y. Sakellaridis for providing the Appendix to this paper, proving 
a more general form of Theorem \ref{mainthm} in characteristic 0.
He also thanks
M. Borovoi, M. Brion, N. Matringe,  Arnab Mitra,   Sandeep Varma for helpful discussions. Thanks are also due to the referee for a careful reading and useful feedback.
Author's research was supported by the JC Bose Fellowship of DST, India. This work was supported by a grant of
the Government of the Russian Federation
for the state support of scientific research carried out
under the  agreement 14.W03.31.0030 dated 15.02.2018.

\begin{bibdiv}

  \begin{biblist}\bib{AV}{article}{
author={J. Adams}
author={D. Vogan}
title={L-groups, projective representations, and the Langlands classification}
journal={ Amer. J. Math.}
volume={ 114, no. 1}
pages={ 45-138}
year= {1992}}

\bib{AV2}{article}{
author={J. Adams}
author={D. Vogan}
title={Contragredient representations and characterizing the local Langlands correspondence.}
journal={ Amer. J. Math.}
volume={ 138, no. 3}
pages={ 657-682}
year= {2016}}

\bib{AGR}{article}{
author={A. Ash}
author={D.Ginzburg}
author={S.Rallis}
title={Vanishing periods of cusp forms over modular symbols}
journal={Math. Ann}
date={1993}
 volume={296, no. 4}
pages={ 709-723}}

\bib{BZ}{article}{
  author={J. Bernstein},
  author={A. Zelevinskii},
  title={Representations of the group $\GL(n,F)$  where $F$ is a non-archimdean local field}, 
  journal={  Russian Math. Surveys },
  volume={31(3)},
  date={1976},
   pages={1-68},
  }

\bib{BD}{article}{
author={P. Blanc}
author={P. Delorme}
title={ Vecteurs distributions H-invariants de représentations induites, pour un espace sym\'etrique 
r\'eductif $p$-adique G/H}
journal={ Ann. Inst. Fourier (Grenoble)}
volume={ 58 (1)}
year={ 2008}
pages={213--261}}

\bib{Bo}{article}{ 
author= {A.  Borel} 
title={Automorphic L-functions}
journal= {Automorphic forms, representations and L-functions, 
Proc. Sympos. Pure Math., Oregon State Univ., Corvallis, Ore., 1977, Part 2, 
Proc. Sympos. Pure Math., XXXIII, Amer. Math. Soc., Providence, R.I.}
pages={27-61} 
date={1979}}

\bib{Bou}{article}{ 
author={N. Bourbaki}
title={ Lie groups and Lie algebras. Chapters 4-6. Translated from the 1968 French original by Andrew Pressley. Elements of Mathematics}
journal={Springer-Verlag, Berlin}
year={ 2002}} 

\bib{DL}{article}{ 
author={P. Deligne}
author={G. Lusztig}
title={ Representations of reductive groups over finite fields}
journal={ Ann. of Math. (2)}
volume={ 103, no. 1}
year={1976}
pages={ 103-161}}

\bib{DP}{article}{ 
author={S. Dijols}
author={D. Prasad}
title={Symplectic models for Unitary groups}
journal={arXiv:1611.01621, Transactions of the AMS, DOI: https://doi.org/10.1090/tran/7651. }
}

\bib{HM}{article}{
author={J. Hakim}
author={F. Murnaghan}
title={Distinguished tame supercuspidal representations}
journal={Int. Math. Res. Pap. IMRP}

date={ 2008}

volume={ no. 2}}

\bib{Hel}{article}{
author={S. Helgason}
title={ Differential geometry, Lie groups, and symmetric spaces.}
journal={ Pure and Applied Mathematics, 80. Academic Press, Inc.}
year={1978} 
}

\bib{Hen}{article}{
author={A. Henderson}
title={Symmetric subgroup invariants in irreducible representations of $G^F$, when $G=\GL_n$ }
journal={  J. Algebra }
date={2003}
volume={ 261}
}

\bib{HR}{article}{
author={M. Heumos}
author={S.Rallis}
title={Symplectic-Whittaker models for $\GL_n$}
journal={  Pacific J. Math. }
date={1990}
volume={ 146}
pages={247-279}
}

\bib{KT}{article}{
author={S. Kato}
author={K. Takano}
title={ Subrepresentation theorem for $p$-adic symmetric spaces.}

journal={Int. Math. Res. Not. IMRN}
date={2008, no. 11}
}

\bib{Kum}{article}{
author={R. Kumanduri}
title={Distinguished representations for unitary groups}
 journal={Pacific J. Math}
volume={178, no. 2}
date={1997}
pages={293-306}
}

\bib{lusztig}{article}{
author={G. Lusztig}
title={Symmetric Spaces over a Finite Field}
journal={ In The Grothendieck Festschrift, edited by P. Cartier et. al., Boston, Birkhäuser}
volume={ 3}

pages={ 57--81}
 
date={1990}
}

\bib{matringe}{article}{
author={N. Matringe}
title ={
Linear and Shalika local periods for the mirabolic group, and some consequences } 
journal ={J. Number Theory}
volume = {138}
date = {2014}
pages={1-19}}

\bib{matsuki}{article}{
author={T. Matsuki} 
title={The orbits of affine symmetric spaces under the action of minimal parabolic subgroups,}
 journal={J. Math. Soc. Japan,}
 volume= {31}
 date={1979}
 pages={331-357.}
 }

 \bib{offen}{article}{
 author={O. Offen}
title={On parabolic induction associated with a $p$-adic symmetric space}
journal={J. Number Theory}
volume={170}
year={2017}
pages={211-227}
}

\bib{OM}{article}{
author={T. Oshima}
author={T. Matsuki}
title ={A description of discrete series for semisimple symmetric spaces. }
journal ={Group representations and systems of differential equations, Adv. Stud. Pure Math.,}
volume = {4}
date = {1982}
pages={331-390}}

\bib{SV}{article}{
author={Y. Sakellaridis}
author={A. Venkatesh}
title={Periods and harmonic analysis on spherical varieties}
volume={396}
journal={Asterisque}
date={2017}
}

\bib{Se}{article}{
author={ J.-P. Serre}
title={ Galois cohomology} 
journal={Translated from the French by Patrick Ion and revised by the author. Corrected reprint of the 1997 
English edition. Springer Monographs in Mathematics. Springer-Verlag}
date={ 2002} 
}

\bib{Se2}{article}{
author={ J.-P. Serre}
title={Exemples de plongements des groupes $\PSL_2(\F_p)$ dans des groupes de Lie simples.}
journal={Invent. Math.}
volume={124, no. 1-3,}
pages={ 525-562}
year={1996}
}

\bib{Sp2}{article}{
author={T. Springer}
title={Linear algebraic groups}
journal={Progress in Mathematics, Birkhauser}
volume={9}
date={1998}
}

\bib{Sp}{article}{
author={T. Springer}
title={Some results on algebraic groups with involutions}
journal={Algebraic groups and related topics (Kyoto/Nagoya, 1983), 525--543, Adv. Stud. Pure Math., 6, North-Holland, Amsterdam}
date={1985}
}

\bib{Sp3}{article}{
 author={T. Springer}
 title={The classification of involutions of simple algebraic groups.}
journal={J. Fac. Sci. Univ. Tokyo Sect. IA Math.}
volume={34 (1987), no. 3,}
pages={655-670} 
}

\bib{St}{article}{
author={R. Steinberg}
title={Endomorphisms of linear algebraic groups,}
journal={Memoirs of the American Mathematical Society, No. 80 American Mathematical Society, Providence, R.I.}
date={1968}}

\bib{Zh}{article}{
author={L.Zhang}
title={Distinguished tame supercuspidal representations of symmetric pairs $(\Sp_{4n}(F),\Sp_{2n}(E))$, 
With an appendix by Dihua Jiang and the author}
journal={Manuscripta Math}
volume={148, no. 1-2}
date={ 2015}, 
pages={213-233}}

\end{biblist}

\end{bibdiv}

\newpage 

\appendix{}

\section{An application of a theorem of Knop} 
\vspace{.5cm}

\centerline{\bf By }
\vspace{.5cm}

\centerline{\bf Yiannis Sakellaridis}
\vspace{.5cm}

The goal of this appendix is to apply a theorem of Friedrich Knop in order to give another proof of Theorem \ref{mainthm} of this paper, which applies to a more general class of $G$-varieties, when the characteristic of the field is zero.

More precisely, let $G$ be a reductive group over a field $k$ in characteristic zero, and let $X$ be a normal variety with a $G$-action. For the result of this appendix, there is no harm in generality in assuming that $k$ is algebraically closed, and we will do so. Let $B$ be a Borel subgroup of $G$, with unipotent radical $U$. To formulate the result, we recall that the isomorphism type of a ``generic $B$-orbit'' is well-defined, namely: there is a parabolic $P\supset B$, with a Levi subgroup $L$ and a torus quotient $L\to A_X$, such that $X$ contains a $P$-stable, Zariski open subset which is $P$-isomorphic to
 $$ (A_X \times^L P) \times V,$$
 where $V$ is a variety with trivial $P$-action, see \cite[Satz 2.3 and Korollar 2.4]{KnWeyl}.
 
For any $G$-variety $X$, we will denote the above parabolic $P$ attached to its normalization by $P(X)$; its conjugacy class does not depend on any choices. For a symmetric space defined by an involution $\theta$ of $G$ it is well-known, and easy to see, that $P(X)$ is the class of \emph{minimal $\theta$-split parabolics}.

If $P^-$ is the opposite parabolic with $P^-\cap P = L$, and $S$ denotes the kernel of the composition of maps:
$$ P^-\to L \to A_X,$$
Knop calls the variety $X':=S\backslash G$ the ``horospherical type'' of $X$.

In this appendix we prove:

\begin{thm}\label{appthm}
 Let $G$ be a reductive group over a field $k$ in characteristic zero, and let $X$ be a variety with a $G$-action. Let $B\supset U$ be a Borel subgroup with its unipotent radical, and $x\in X$ any point. If there is a generic character $U\to \Ga$ (that is, one which is nonzero on every simple root subgroup) which is trivial on the stabilizer $U_x$, then $P(X)=B$.
\end{thm}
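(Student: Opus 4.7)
The plan is to translate the hypothesis into a covector inside the moment-map image $\mu(T^*X) \subseteq \mathfrak{g}^*$ and then to invoke a theorem of Knop (cf.\ \emph{Weylgruppe und Momentabbildung}, Invent. Math. 99 (1990), and \emph{The asymptotic behaviour of invariant collective motion}, Invent. Math. 116 (1994)) describing $\overline{\mu(T^*X)}$, under the Killing identification $\mathfrak{g}^* \cong \mathfrak{g}$, as the $G$-saturation $\overline{G \cdot (\mathfrak{a}_X + \mathfrak{u}(P(X)^-))}$, where $\mathfrak{u}(Q)$ denotes the nilradical of a parabolic $Q$.

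First I construct a Whittaker covector at $x$. Decomposing $\mathfrak{g} = \mathfrak{b}^- \oplus \mathfrak{u}$ as a vector space, I seek $\xi = \eta + d\chi \in (\mathfrak{b}^-)^* \oplus \mathfrak{u}^* = \mathfrak{g}^*$ satisfying $\xi|_{\mathfrak{g}_x} = 0$. For $Z = Z_- + Z_+ \in \mathfrak{g}_x$ with $Z_- \in \mathfrak{b}^-$ and $Z_+ \in \mathfrak{u}$, this becomes $\eta(Z_-) = -d\chi(Z_+)$. Two such $Z$ sharing the same $Z_-$ differ by an element of $\mathfrak{g}_x \cap \mathfrak{u} = \mathfrak{u}_x := \mathrm{Lie}(U_x) \subseteq \ker d\chi$, so the assignment descends to a well-defined linear functional on $\mathrm{pr}_{\mathfrak{b}^-}(\mathfrak{g}_x) \subseteq \mathfrak{b}^-$, which one extends arbitrarily. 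Since $\mu(T^*_x X) = \mathfrak{g}_x^\perp$, we obtain $\xi \in \mu(T^*X)$ with $\xi|_{\mathfrak{u}} = d\chi$.

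Next, because $\chi$ is a character of $U$, its differential $d\chi$ factors through $\mathfrak{u}/[\mathfrak{u},\mathfrak{u}] = \bigoplus_i \mathfrak{g}_{\alpha_i}$, so it is supported on the simple-root dual lines, each of weight $-2$ under the principal cocharacter $2\rho^\vee : \Gm \to T$. In contrast, the components of $\eta$ on $\mathfrak{t}^*$ and $\mathfrak{g}_{-\alpha}^*$ have respective weights $0$ and $+2\,\mathrm{ht}(\alpha) > 0$. Setting $\xi_t := t^2 \cdot \mathrm{Ad}^*(2\rho^\vee(t))\,\xi$, the $d\chi$-part is preserved while every piece of $\eta$ scales by a strictly positive power of $t$; hence $\xi_t \to d\chi$ as $t \to 0$, and by closedness and $T$-invariance of $\overline{\mu(T^*X)}$ we conclude $d\chi \in \overline{\mu(T^*X)}$. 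Under Killing, $d\chi$ corresponds to the regular nilpotent $Y_\chi = \sum_i c_i X_{-\alpha_i} \in \mathfrak{u}^-$ with all $c_i \neq 0$.

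By Knop's theorem, $Y_\chi \in \overline{G \cdot (\mathfrak{a}_X + \mathfrak{u}(P(X)^-))}$. Any $h + N$ with $h \in \mathfrak{a}_X$ and $N \in \mathfrak{u}(P(X)^-)$ has semisimple part conjugate to $h$, so is nilpotent precisely when $h = 0$; it follows that the nilpotent locus of this $G$-saturation (even after closure) is $\overline{G \cdot \mathfrak{u}(P(X)^-)}$, so $Y_\chi \in \overline{G \cdot \mathfrak{u}(P(X)^-)}$. Since the orbit $G \cdot Y_\chi$ has dimension $2\dim\mathfrak{u}$ while $\overline{G \cdot \mathfrak{u}(P(X)^-)}$ has dimension at most $2\dim\mathfrak{u}(P(X))$, we force $\mathfrak{u}(P(X)) = \mathfrak{u}$, i.e., $P(X) = B$. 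The principal obstacle is to invoke Knop's theorem in the precise form needed --- in particular, to pin down the nilpotent locus of $\overline{\mu(T^*X)}$ as $\overline{G \cdot \mathfrak{u}(P(X)^-)}$ for general normal (possibly non-spherical, non-homogeneous) $G$-varieties in characteristic zero; the remaining ingredients are a linear-algebra extension, a weight-space limit, and a standard orbit-dimension comparison.
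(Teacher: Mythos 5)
Your proposal runs on the same backbone as the appendix's proof: both arguments reduce to Knop's theorem (Satz 5.4 of \emph{Weylgruppe und Momentabbildung}), and both begin by producing a covector $\xi$ in the moment-map image whose restriction to $\uu$ is the generic character --- your extension argument is just an explicit version of the observation that the image of $T^*_xX\to \uu^*$ is $\uu_x^\perp$. Where you diverge is the endgame. The appendix proves that the image of the moment map of the horospherical type $X'=S\backslash G$ (which under the Killing form is exactly your $G\cdot(\mathfrak{a}_X+\mathfrak{u}(\mathfrak{p}^-))$) is \emph{closed}, because it is the image of the proper map $\mathfrak{s}^\perp\times^{P^-}G\to\g^*$, and then concludes directly by a one-line root-theoretic lemma: a covector in $\mathfrak{s}^\perp$ that is generic on $\uu$ forces $\uu\cap[\mathfrak{p}^-,\mathfrak{p}^-]=0$, hence $P^-$ opposite to $B$. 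You instead degenerate $\xi$ to the regular nilpotent $Y_\chi$ by the $2\rho^\vee$-scaling (a correct but avoidable step), intersect with the nilpotent cone, and finish by comparing $\dim G\cdot Y_\chi=2\dim\uu$ with $2\dim\mathfrak{u}(P(X)^-)$. That endgame is fine, and is a genuine variant.

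There are two gaps to flag. The more serious one is precisely the point you label as the ``principal obstacle'': your dimension count needs the nilpotent locus of $\overline{G\cdot(\mathfrak{a}_X+\mathfrak{u}(\mathfrak{p}^-))}$ to be $\overline{G\cdot\mathfrak{u}(\mathfrak{p}^-)}$, but your justification (Jordan decomposition of $h+N$) only treats the set \emph{before} closure; the cruder estimate $\dim\le\dim\mathfrak{a}_X+2\dim\mathfrak{u}(\mathfrak{p}^-)$ would not force $P(X)=B$ when $L$ has small semisimple rank. The fix is exactly the appendix's Lemma: $\mathfrak{a}_X+\mathfrak{u}(\mathfrak{p}^-)$ is $P^-$-stable, so $G\cdot(\mathfrak{a}_X+\mathfrak{u}(\mathfrak{p}^-))$ is the image of the proper map $G\times^{P^-}(\mathfrak{a}_X+\mathfrak{u}(\mathfrak{p}^-))\to\g$ and is therefore already closed, after which your Jordan-decomposition argument applies verbatim. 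The second, smaller, gap: the statement allows an arbitrary (possibly singular) $X$, while your use of $\mu(T^*_xX)=\g_x^\perp$ and of Knop's theorem requires $X$ smooth (and $k$ may be taken algebraically closed); the appendix handles this by passing to an equivariant resolution $\tilde X\to X$, noting that $U_{\tilde x}\subset U_x$ for $\tilde x$ above $x$ and that $P(\tilde X)=P(X)$, and you should include the same reduction. With these two points supplied, your argument is complete.
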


The proof is a direct application of the following theorem of Knop: To state it, assume that $X$ is smooth, and recall that the infinitesimal action of the Lie algebra $\g$ induces vector fields on $X$; the adjoint to this is the moment map 
$$ m_X : T^*X\to \g^*.$$
We use similar notation for the horospherical type,
$$ m_{X'}: T^*X' = \mathfrak s^\perp\times^S G\to \g^*.$$

Knop proves in \cite[Satz 5.4]{KnWeyl}:
\begin{thm}[Knop] \label{Knoptheorem}
 The closures of the images of $m_X$ and $m_{X'}$ in $\g^*$ coincide.
\end{thm}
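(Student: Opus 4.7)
The plan is to use the hypothesis to produce a regular nilpotent element inside $\overline{G \cdot \mathfrak{s}^\perp}$, and then to read off $P(X) = B$ from the structure of the nilpotent cone inside that variety. First, I would lift $d\psi$ to a covector at $x$: the restriction of the moment map to $T^*_x X$ has image equal to the annihilator $\g_x^\perp \subset \g^*$ of the Lie-algebra stabilizer, and since $d\psi$ vanishes on $\mathfrak{u}_x := \mathfrak{u} \cap \g_x$ by hypothesis, the partial definitions $\eta|_\mathfrak{u} := d\psi$ and $\eta|_{\g_x} := 0$ agree on $\mathfrak{u}_x$ and extend to a functional $\eta \in \g_x^\perp$. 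Thus $\eta \in \mathrm{Im}(m_X)$, and Knop's theorem (Theorem \ref{Knoptheorem}) places $\eta$ in the closed $G$-invariant cone $\overline{G \cdot \mathfrak{s}^\perp}$.

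Next I would contract $\eta$ to a regular nilpotent element lying in the same closure. Since $d\psi$ is an algebraic group homomorphism $U \to \Ga$ it factors through $\mathfrak{u}/[\mathfrak{u},\mathfrak{u}]$, so $\eta$ automatically vanishes on every non-simple positive root space. Let $\rho^\vee_\omega : \Gm \to T$ be the sum of the fundamental coweights, so that $\langle \alpha, \rho^\vee_\omega \rangle$ is the height $h(\alpha)$. The coadjoint conjugation by $\rho^\vee_\omega(t)$ scales the $\g^*$-component in weight $\alpha$ by $t^{-h(\alpha)}$, so the rescaling $t \cdot \mathrm{Ad}^*(\rho^\vee_\omega(t)) \eta$ scales each component by $t^{1 - h(\alpha)}$---an exponent which equals $0$ on simple positive roots and is strictly positive on the Cartan and on every negative root. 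Letting $t \to 0$ therefore collapses the family to the functional $d\psi$ extended by zero off the simple positive root spaces. Since $\overline{G \cdot \mathfrak{s}^\perp}$ is a closed, $G$-stable cone, this limit lies in it; via the Killing form, the resulting element corresponds to a regular nilpotent in $\g$.

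The final step is to extract $P(X) = B$ from the presence of a regular nilpotent in $\overline{G \cdot \mathfrak{s}^\perp}$. Decomposing $\mathfrak{s}^\perp = \mathfrak{u}_P^- \oplus \mathfrak{z}_X$ with $\mathfrak{z}_X \subset \mathfrak{t}$, the semisimple part of $u + z \in \mathfrak{s}^\perp$ is $U_P^-$-conjugate to $z$, so the Chevalley map sends $u + z$ to $\chi(z)$; in particular $\mathfrak{s}^\perp \cap \mathcal{N} = \mathfrak{u}_P^-$, since $\mathfrak{z}_X \to \mathfrak{t}^*/W$ is finite and $\chi^{-1}(0) \cap \mathfrak{z}_X = \{0\}$. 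I would upgrade this to
\[
\overline{G \cdot \mathfrak{s}^\perp} \cap \mathcal{N} \;=\; \overline{G \cdot \mathfrak{u}_P^-} \;=\; \overline{\mathcal{O}_{P^-}},
\]
where $\mathcal{O}_{P^-}$ is the Richardson nilpotent orbit of $P^-$. Granting this, the regular nilpotent orbit $G \cdot d\psi$ is contained in $\overline{\mathcal{O}_{P^-}}$, and since the regular orbit is the unique maximum in the closure order on nilpotent orbits, $\mathcal{O}_{P^-}$ itself must be regular, which happens precisely when $P^- = B^-$, i.e., $P(X) = B$.

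The main obstacle will be this inclusion $\overline{G \cdot \mathfrak{s}^\perp} \cap \mathcal{N} \subseteq \overline{\mathcal{O}_{P^-}}$, since a priori $G$-saturation and closure could introduce additional nilpotent orbits. The natural argument runs along the Chevalley fibration: any nilpotent limit $\eta = \lim \mathrm{Ad}^*(g_i)(u_i + z_i)$ forces $\chi(z_i) \to 0$ and hence $z_i \to 0$, after which a valuative-criterion argument exploiting the properness of the Springer-type map $G \times^{P^-} \mathfrak{u}_P^- \to \overline{\mathcal{O}_{P^-}}$ on the nilpotent locus produces a lift of the degenerating family with limit in $\overline{\mathcal{O}_{P^-}}$. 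This is the essential technical content that the proof must supply beyond the direct application of Knop's theorem.
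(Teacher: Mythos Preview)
Your proposal does not address the stated theorem. Theorem~\ref{Knoptheorem} is Knop's result that the closures of the images of the moment maps $m_X$ and $m_{X'}$ coincide; the paper does not prove this but simply cites it from \cite[Satz~5.4]{KnWeyl}. Your argument instead \emph{assumes} Theorem~\ref{Knoptheorem} (you invoke it explicitly in the first paragraph to place $\eta$ in $\overline{G\cdot\mathfrak{s}^\perp}$) and attempts to deduce $P(X)=B$ from the existence of a generic character trivial on $U_x$. That is the content of Theorem~\ref{appthm}, not of Theorem~\ref{Knoptheorem}; you have targeted the wrong statement.

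Viewed as an attempt at Theorem~\ref{appthm}, your route is considerably more elaborate than the paper's and carries the gap you yourself flag. The paper's proof avoids the nilpotent cone entirely: Lemma~\ref{thelemma} observes that the image of $m_{X'}$ is already \emph{closed}, because the map $\mathfrak{s}^\perp\times^{P^-}G\to\g^*$ is proper (its source lives over the complete variety $P^-\backslash G$). Hence the closure appearing in Knop's theorem is simply $G\cdot\mathfrak{s}^\perp$ itself, and once the generic covector $\eta$ lies there, a short root-space argument shows that any Borel with respect to which $\eta$ is generic must be opposite to $P^-$, giving $P(X)=B$. Your contraction to a regular nilpotent and the Richardson-orbit analysis are therefore unnecessary detours, and the inclusion $\overline{G\cdot\mathfrak{s}^\perp}\cap\mathcal{N}\subseteq\overline{\mathcal{O}_{P^-}}$ that you identify as the main obstacle never needs to be established.
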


Let us consider an additive character $\psi: U\to \Ga$, and its differential $d\psi:\uu\to \Ga$. The character is non-trivial on every simple root subspace if and only if its differential is, and in this case they will both be called ``generic''. Let us introduce similar language for elements of the dual Lie algebra $\g^*$: We will call an element $v\in \g^*$ \emph{generic} if there is a Borel subgroup $B\supset U$ as above such that the image of $v$ under the restriction map $\g^*\to \uu^*$ is generic. 

To use Theorem \ref{Knoptheorem}, observe first:
\begin{lemma}\label{thelemma}
 The image of $m_{X'}$ is closed, and it contains generic vectors if and only if $P(X)=B$.
\end{lemma}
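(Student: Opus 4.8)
The plan is to make $\mathrm{im}(m_{X'})$ completely explicit and then read off both assertions. Since $X'=S\backslash G$ is $G$-homogeneous and $S$ is normal in $P^-$, the cotangent space at the base coset is $\mathfrak{s}^\perp\subset\g^*$ and $m_{X'}$ restricted to that fibre is just the inclusion; hence $\mathrm{im}(m_{X'})=G\cdot\mathfrak{s}^\perp$, and, $\mathfrak{s}^\perp$ being $P^-$-stable, this is the image of the $G$-equivariant map $\mu\colon G\times^{P^-}\mathfrak{s}^\perp\to\g^*$, $[g,\xi]\mapsto\Ad^*(g)\xi$. Writing $\mathfrak{s}=\mathfrak{n}^-\oplus\mathfrak{l}_0$, where $\mathfrak{n}^-=\mathrm{Lie}\,R_u(P^-)$ and $\mathfrak{l}_0=\ker(\mathfrak{l}\to\mathrm{Lie}\,A_X)\supseteq[\mathfrak{l},\mathfrak{l}]$, one computes (fixing a non-degenerate invariant form to identify $\g^*\cong\g$) that $\mathfrak{s}^\perp=\mathfrak{n}^-\oplus\mathfrak{z}_1$, with $\mathfrak{z}_1\subseteq\mathfrak{z}(\mathfrak{l})$ a complement of $\mathfrak{z}_0:=\ker\bigl(\mathfrak{z}(\mathfrak{l})\to\mathrm{Lie}\,A_X\bigr)$, so that $\dim\mathfrak{z}_1=\dim A_X$.

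For closedness I would argue as for the Springer resolution: $G\times^{P^-}\mathfrak{s}^\perp$ is a vector bundle over the projective variety $G/P^-$, the map $[g,\xi]\mapsto(gP^-,\Ad^*(g)\xi)$ is a closed immersion of it into $G/P^-\times\g^*$ (its image is the closed incidence subvariety $\{(gP^-,v):v\in\Ad^*(g)\mathfrak{s}^\perp\}$), and composing with the proper projection $G/P^-\times\g^*\to\g^*$ exhibits $\mathrm{im}(m_{X'})=\mu(G\times^{P^-}\mathfrak{s}^\perp)$ as the image of a proper morphism, hence closed.

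The equivalence is then a statement about $\mathfrak{s}^\perp$ alone. If $P(X)=B$ then $P^-=B^-$, $\mathfrak{n}^-=\mathfrak{u}^-$, so $\mathfrak{s}^\perp\supseteq\mathfrak{u}^-$ contains a regular nilpotent, which is a generic vector, and therefore so does $\mathrm{im}(m_{X'})=G\cdot\mathfrak{s}^\perp$; this is the easy direction. For the converse — the heart of the lemma — I would show that when $P(X)\supsetneq B$ no element of $\mathfrak{s}^\perp$ is generic; since the generic locus in $\g^*$ is $G$-stable, this gives $\mathrm{im}(m_{X'})\cap\{\text{generic}\}=\varnothing$. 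The orbit-theoretic content is clean: an element $N+Z\in\mathfrak{n}^-\oplus\mathfrak{z}_1$ with $Z$ regular in $\mathfrak{z}(\mathfrak{l})$ is $R_u(P^-)$-conjugate to $Z$, since $\operatorname{ad}(Z)$ is invertible on $\mathfrak{n}^-$, so its centraliser is conjugate to $\g^{Z}=\mathfrak{l}$; for $Z$ non-regular in $\mathfrak{z}(\mathfrak{l})$, and in particular for $Z=0$ — where $N\in\mathfrak{n}^-$ is nilpotent and $2\dim\mathfrak{n}^-<\dim G-\rank G$ — the centraliser is only larger. Hence every $G$-orbit meeting $\mathfrak{s}^\perp$ has dimension $<\dim G-\rank G$, i.e.\ consists of non-regular elements. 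The step I expect to be the genuine obstacle is to close the gap between this and genericity: one must verify that a generic covector lies on a $G$-orbit of the maximal dimension $\dim G-\rank G$ (equivalently, that ``$v|_{\mathfrak{u}}$ generic for some Borel'' forces $v$ to be a regular element of $\g^*$), a Kostant-type fact about Whittaker-generic functionals whose verification — rather than the orbit bookkeeping above — is the delicate point. Granting it, $P(X)\supsetneq B$ forces $\mathrm{im}(m_{X'})$ to contain no generic vector, which together with the easy direction and the closedness statement completes the lemma.
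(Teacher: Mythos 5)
Your closedness argument and your reduction to the single fibre $\mathfrak{s}^\perp$ coincide with the paper's (properness of $\mathfrak{s}^\perp\times^{P^-}G\to\g^*$ over the flag variety), and the easy direction is fine. The genuine gap is exactly the step you ``grant'': that ``$v|_{\uu}$ generic for some Borel'' forces $v$ to be regular. In the form you state it --- genericity meaning only that $v$ is nonzero on every simple root subspace of $\uu$ --- this is \emph{false}. In $\g=\mathfrak{sl}_3$, identified with $\g^*$ by the trace form, take the traceless rank-one matrix $x=uw^{T}$ with $u=(1,1,1)^{T}$, $w=(1,1,-2)^{T}$, i.e.\ $x=\left(\begin{smallmatrix}1&1&-2\\ 1&1&-2\\ 1&1&-2\end{smallmatrix}\right)$: it is nilpotent of Jordan type $(2,1)$, hence has centraliser of dimension $4>\rank\g$, yet $\tr(xE_{12})=\tr(xE_{23})=1\neq 0$, so the associated covector is ``generic'' in your sense for the standard Borel. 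Worse, such rank-one nilpotents already lie in $\mathfrak{n}^-$ for the $(2,1)$-parabolic, hence in $G\cdot\mathfrak{s}^\perp$ with $P\neq B$; so with this reading of genericity your strategy cannot be repaired --- the hard direction itself would fail. What makes the lemma true, and what the appendix's definition actually requires (``generic'' is defined there for differentials of characters $U\to\Ga$), is that $v|_{\uu}$ must in addition vanish on $[\uu,\uu]$. With that extra condition $v$ corresponds to an element of $f+\mathfrak{b}$ with $f$ a sum of nonzero negative simple root vectors, and regularity is then indeed a theorem of Kostant. But you never impose $v|_{[\uu,\uu]}=0$, and you do not prove the Kostant-type statement in any form; since this is the crux of your argument (and, as literally stated, it is wrong), the proposal has a genuine gap.

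Two further remarks. First, the paper's own proof of the hard direction bypasses regularity and orbit dimensions altogether: for $v\in\mathfrak{s}^\perp$ the covector kills $[\mathfrak{p}^-,\mathfrak{p}^-]\subset\mathfrak{s}$, and genericity of $v|_{\uu}$ (as a character differential) is used directly to force $\uu\cap[\mathfrak{p}^-,\mathfrak{p}^-]=0$, which can only happen when $P^-$ is opposite to $B$; this is both shorter and avoids the delicate Kostant input your route needs. Second, even granting that input, your orbit bookkeeping is loose at the intermediate case $Z\neq 0$ non-regular: ``the centraliser is only larger'' is not an argument; one should conjugate $N+Z$ into $Z+(\mathfrak{n}^-\cap\mathfrak{m})$ with $\mathfrak{m}=\mathfrak{z}_{\g}(Z)\supseteq\mathfrak{l}\supsetneq\mathfrak{t}$ and use that the nilradical of a non-Borel parabolic of $\mathfrak{m}$ contains no regular nilpotent of $\mathfrak{m}$. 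That part is repairable, but the missing regularity criterion, in its corrected form, is where the real content lies.
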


\begin{proof}
The map $\mathfrak s^\perp\times^S G \to \g^*$ factors through $\mathfrak s^\perp\times^{P^-} G$ (with $P^-$ as in the definition of $S$), and since $\mathfrak s^\perp\subset \g^*$ is closed, the map  $\mathfrak s^\perp\times^{P^-} G\to \g^*$ is proper, hence has closed image.

Notice that $X'$ lives over the flag variety $P^-\backslash G$. Let $v\in T^*X'$, which up to the $G$-action we can assume to belong to $\mathfrak s^\perp$. If $B\supset U$ is a Borel subgroup such that the image of $v$ in $\uu^*$ is generic, then $\uu$ cannot intersect the subalgebra $[\mathfrak p^-,\mathfrak p^-]\subset \mathfrak s$, where $v$ is trivial. But this is only possible if $P^-$ is opposite to $B$, that is, if $P(X)=B$.
\end{proof}

\begin{proof}[Proof of Theorem \ref{appthm}]
Without loss of generality, we may assume $X$ to be smooth. Indeed, given the existence of equivariant resolutions of singularities (see \cite[Proposition 3.9.1]{Kollar}), if the theorem is true for an equivariant resolution $\tilde X\to X$, it is true \emph{a fortiori} for $X$.

Let $B\supset U$ be a Borel subgroup, $x\in X$, and consider the moment map with respect to the $U$-action, restricted to the fiber over $x$:
 $$ T_x^*X \to \g^* \to \uu^*.$$
Its image is equal to $\uu_x^\perp$, where $\uu_x$ is the Lie algebra of the stabilizer of $x$ in $U$. Hence, if there is a generic character $U\to \Ga$ which is trivial on the stabilizer $U_x$, or equivalently: such that its differential $\uu\to \Ga$ is trivial on $\uu_x$, the image of $T_x^*X$ under the moment map contains a generic vector, and by  Lemma \ref{thelemma} this implies that $P(X)=B$. 
\end{proof}

\begin{bibdiv}

\begin{biblist}

\bib{Kollar}{book}{
AUTHOR = {Koll\'{a}r, J\'{a}nos},
     TITLE = {Lectures on resolution of singularities},
    SERIES = {Annals of Mathematics Studies},
    VOLUME = {166},
 PUBLISHER = {Princeton University Press, Princeton, NJ},
      YEAR = {2007},
     PAGES = {vi+208},
      ISBN = {978-0-691-12923-5; 0-691-12923-1},
}

\bib{KnWeyl}{article}{
AUTHOR = {F.~Knop},
TITLE = {Weylgruppe und {M}omentabbildung},
JOURNAL = {Invent. Math.},
VOLUME = {99},
YEAR = {1990},
NUMBER = {1},
PAGES = {1--23},
}

\end{biblist}
\end{bibdiv}

\vspace{.5cm}

\vspace{.5cm}

\noindent{Dipendra Prasad,}

\noindent  Indian Institute of Technology Bombay, Mumbai. 

\noindent Tata Institute of Fundamental Research, Mumbai.

\noindent St Petersburg State University, St Petersburg.

\noindent Email: {\tt prasad.dipendra@gmail.com}
    
    \vspace{.5cm}
    
    \noindent{Yiannis Sakellaridis,}
    
\noindent{Rutgers University, Newark, NJ 07102.}
    
    \end{document}